\documentclass[10pt]{amsart}

\AtBeginDocument{%
   \def\MR#1{}
}

\setcounter{tocdepth}{1}

\usepackage[english]{babel}
\usepackage[T1]{fontenc}
\usepackage[utf8]{inputenc}

\usepackage[a4paper,top=3cm,bottom=3cm,left=2.7cm,right=2.7cm,marginparwidth=2cm]{geometry}

\usepackage{tikz}
\usepackage{tikz-cd}

\usepackage{amssymb}
\usepackage{amsmath}
\usepackage{amsfonts}
\usepackage{mathtools}

\usepackage{float}
\floatstyle{plaintop}
\restylefloat{table}

\usepackage[colorlinks=true, linkcolor=blue, citecolor=blue]{hyperref}
\usepackage{cleveref}

\theoremstyle{plain}
\newtheorem{theorem}{Theorem}[section]
\newtheorem{lemma}[theorem]{Lemma}
\newtheorem{corollary}[theorem]{Corollary}

\newtheorem{proposition}[theorem]{Proposition}

\theoremstyle{definition}
\newtheorem{definition}[theorem]{Definition}

\newtheorem{remark}[theorem]{Remark}
\newtheorem{example}[theorem]{Example}
\numberwithin{equation}{section}

\DeclareMathOperator{\Hom}{Hom}

\DeclareMathOperator{\coker}{coker}
\DeclareMathOperator{\rank}{\underline{rank}}
\DeclareMathOperator{\rep}{rep}
\DeclareMathOperator{\replf}{rep_{l.f.}}

\DeclareMathOperator{\gr}{\operatorname{Gr}}
\DeclareMathOperator{\dreplf}{decrep_{l.f.}}
\DeclareMathOperator{\drepfr}{decrep_{l.f.}^\circ}

\newcommand{\inj}{\mathcal{I}}

\title{Generic bases of skew-symmetrizable affine type cluster algebras}

\author{Lang Mou}
\author{Xiuping Su}

\address{Lang Mou\newline
Department of Mathematics, University of California Davis, \newline
One Shields Ave., Davis, CA 95616, USA}
\email{lmou.math@gmail.com}

\address{Xiuping Su\newline
Mathematical Sciences, University of Bath, \newline
Bath BA2 7AY, U.K.}
\email{xs214@bath.ac.uk}

\date{}

\begin{document}

\begin{abstract}
Geiss, Leclerc and Schr\"oer introduced a class of 1-Iwanaga--Gorenstein algebras $H$ associated to symmetrizable Cartan matrices with acyclic orientations, generalizing the path algebras of acyclic quivers. They also proved that indecomposable rigid $H$-modules of finite projective dimension are in bijection with non-initial cluster variables of the corresponding Fomin--Zelevinsky cluster algebra. In this article, we prove in all affine types that their conjectural Caldero--Chapoton type formula on these modules coincide with the Laurent expression of cluster variables. By taking generic Caldero--Chapoton functions on varieties of modules of finite projective dimension, we obtain bases for affine type cluster algebras with full-rank coefficients containing all cluster monomials.
\end{abstract}

\maketitle

\tableofcontents

\section{Introduction}\label{section: introduction}

For a triple $(C, D, \Omega)$ of a symmetrizable Cartan matrix $C$, a symmetrizer $D$ and an acyclic orientation $\Omega$ of $C$, Geiss, Leclerc and Schr\"oer \cite{MR3660306} introduced and initiated the study of a finite-dimensional algebra $H = H_K(C, D, \Omega)$ over a field $K$, generalizing the path algebra of an acyclic quiver. These algebras are 1-Iwanaga--Gorenstein \cite{MR0522552, MR0597688} and have subcategories $\replf H \subseteq \rep H$ consisting of \emph{locally free} modules (see \Cref{section: reps of h}). The modules in $\replf H$ can be characterized as finitely generated $H$-modules of finite projective dimension (thus at most 1). They play crucial roles in Hall algebra type realizations of the Lie algebra associated to $C$ and of the Fomin--Zelevinsky cluster algebra $\mathcal A(B)$ \cite{MR1887642} associated to the skew-symmetrizable matrix $B = B(C, \Omega) = (b_{ij})_{1\leq i, j\leq n}$, when $C$ is of finite type \cite{MR3555157, MR3830892}.

In this paper, we extend the connection between locally free modules $\replf H$ and the cluster algebra $\mathcal A(B)$ to the affine case, that is, when the Cartan matrix $C$ is of affine type. Affine type cluster algebras are fundamental in the classification and understanding of cluster algebras as they possess infinitely many distinct cluster variables while still exhibit periodic mutation behaviors, for instance, the mutation graph of exchange matrices is finite in this case \cite{felikson2012unfolding}.

Our first focus is proving a conjectural Caldero--Chapoton \cite{MR2250855} type formula of cluster variables proposed by Geiss--Leclerc--Schr\"oer in \cite{MR3830892}. It follows from \cite[Theorem 1.2(a)]{MR4125687} that (for any $C$ not restricted to affine types) sending a locally free $H$-module to its rank vector induces a bijection
\[
    \begin{tikzcd}
        \{\text{isoclasses of indecomposable rigid locally free $H$-modules}\}\ar[r, "\sim"] & \Delta_\mathrm{rS}(C, \Omega)
    \end{tikzcd}
\]
where the later denotes the set of real (positive) Schur roots with respect to $\Omega$ in the root system associated to $C$. In the cluster algebra $\mathcal A(B)$, non-initial cluster variables are parametrized by their $\mathbf d$-vectors, which are exactly the real Schur roots $\Delta_\mathrm{rS}(C, \Omega)$; see \Cref{subsec: real schur roots} and \Cref{thm: d vectors real schur roots}. By composing the two aforementioned bijections, we have a correspondence between indecomposable rigid locally free $H$-modules $M(\beta)$ and non-initial cluster variables $X_\beta$, both labeled by real Schur roots $\beta\in \Delta_{\mathrm{rS}}(C, \Omega)$. Our first main result is the following theorem expressing cluster variables directly from their corresponding modules. We take $K = \mathbb C$ and $\gr_\mathbf r^H(M)$ denotes the quasi-projective complex variety of subobjects of $M$ in $\replf H$ with rank vector $\mathbf r$. The notation $\chi(\cdot)$ takes the Euler characteristic in analytic topology.

\begin{theorem}[\Cref{thm: main bijection cc function}]\label{thm: first main theorem intro}
    Let $\beta = \sum_{i=1}^n m_i \alpha_i$ be a real Schur root with $\{\alpha_1, \dots, \alpha_n\}$ the positive simple roots and $M = M(\beta)$ be the unique indecomposable rigid locally free $H$-module whose rank vector is $(m_i)_{i=1}^n$. If $C$ is of affine type, then the Caldero--Chapoton function
    \begin{equation}\label{eq: cc function introduction}
        X_M \coloneqq \prod_{i=1}^n x_i^{-m_i} \cdot \sum_{\mathbf r\in \mathbb N^n} \chi(\gr_\mathbf r^H(M))\prod_{i=1}^n x_i^{\sum_{j=1}^n ([-b_{ij}]_+m_j + b_{ij}r_j)} \in \mathbb Z[x_1^\pm, \dots, x_n^\pm]
    \end{equation}
    equals the cluster variable $X_\beta \in \mathcal A(B)$ whose $\mathbf d$-vector is $(m_i)_{i=1}^n$.
\end{theorem}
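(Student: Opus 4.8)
The plan is to establish the identity $X_M = X_\beta$ by induction on the mutation/exchange combinatorics of the cluster algebra, using the known compatibility of Caldero--Chapoton functions with mutation. First I would reduce to a statement about a single mutation sequence: since every non-initial cluster variable $X_\beta$ is obtained from the initial seed by a finite sequence of mutations, and since the indecomposable rigid locally free modules $M(\beta)$ are parametrized by the same real Schur roots $\Delta_{\mathrm{rS}}(C,\Omega)$, it suffices to show that the assignment $\beta \mapsto X_{M(\beta)}$ transforms correctly under mutation, i.e.\ satisfies the exchange relations. Concretely, for a real Schur root $\beta$ appearing in a cluster together with roots $\beta_1,\dots,\beta_n$, and its mutation $\beta'$ in direction $k$, one wants the Caldero--Chapoton identity
\begin{equation*}
    X_{M(\beta)} \cdot X_{M(\beta')} = \prod_i X_{M(\beta_i)}^{[b'_{ik}]_+} + \prod_i X_{M(\beta_i)}^{[-b'_{ik}]_+},
\end{equation*}
where $B' = (b'_{ij})$ is the exchange matrix of that seed. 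The initial-seed base case is the observation that the one-dimensional module $E_i$ (the simple locally free module at $i$) has $X_{E_i} = (1 + \prod x_j^{b_{ji}\text{-stuff}})/x_i$, matching the first mutation of the initial cluster variable $x_i$.

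The key technical input I would invoke is a multiplication formula for Caldero--Chapoton functions in the category $\replf H$, analogous to the Caldero--Keller / Hubery multiplication theorems for acyclic quivers and to the Geiss--Leclerc--Schr\"oer results in finite type \cite{MR3555157, MR3830892}. Such a formula expresses $X_L \cdot X_M$ as a sum over the two ``generic'' extension terms whenever $\dim \mathrm{Ext}^1_H(L,M) = \dim \mathrm{Ext}^1_H(M,L) = 1$, which is precisely the situation for a mutation pair $(M(\beta), M(\beta'))$ of rigid modules that are not simultaneously a summand of a rigid module. The Euler-characteristic bookkeeping needed here relies on: (i) the $E$-invariant / homological bilinear form computations for locally free modules in \cite{MR3660306, MR4125687}; (ii) a stratification of $\gr_\mathbf r^H(M(\beta)\oplus M(\beta'))$ compatible with the two nontrivial extensions; and (iii) controlling the remaining ``middle terms'' so that they cancel or collect into the two monomial products on the right-hand side. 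I would also need the compatibility between the homological $g$-vector/rank data of $M(\beta)$ and the $\mathbf d$-vector of $X_\beta$, which is exactly \Cref{thm: d vectors real schur roots} combined with the bijection recalled from \cite[Theorem 1.2(a)]{MR4125687}.

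The main obstacle, and what makes the affine case genuinely harder than the finite-type case treated in \cite{MR3830892}, is that the module varieties $\gr_\mathbf r^H(M)$ are no longer just finite sets of points or smooth projective spaces: in affine type one encounters modules supported on the tubular/regular part of the Auslander--Reiten quiver where the Grassmannian strata can be positive-dimensional and the naive counting of the finite-type proof breaks down. I expect the bulk of the work to be a careful analysis of these regular locally free modules --- identifying which real Schur roots are ``preprojective/preinjective'' versus ``regular'', handling the defect-zero imaginary-root region only insofar as it bounds the strata of the real-root modules, and proving the multiplication formula survives the passage to Euler characteristics despite non-trivial fibrations. A secondary subtlety is full-rankness of coefficients and the claim that the resulting generic functions form a basis, but for \emph{this} theorem (the equality $X_M = X_\beta$ for a single real Schur root) the crux is the mutation-induction together with the affine-type multiplication formula; once that formula is in hand, the induction closes because the exchange graph of an affine type cluster algebra, while infinite, is connected and every cluster variable is reached in finitely many steps from the initial seed.
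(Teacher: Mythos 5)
There is a genuine gap, and it sits exactly at the step you call your ``key technical input.'' Your induction over the exchange graph only closes if you have a Caldero--Keller/Hubery-type multiplication theorem for the locally free CC functions of $\replf H$: an identity for $X_L\cdot X_M$ when $\dim\mathrm{Ext}^1_H(L,M)=\dim\mathrm{Ext}^1_H(M,L)=1$, valid for arbitrary exchange pairs of rigid locally free modules (including regular ones in tubes and modules attached to highly non-acyclic seeds). No such theorem is available for the GLS algebras $H_K(C,D,\Omega)$: the known proofs of cluster multiplication formulas rely either on Hall-algebra/Green's-theorem arguments in a hereditary abelian category over a finite field, or on the $2$-Calabi--Yau Ext-symmetry of a cluster category, and neither structure is present here --- $H$ is not hereditary, $\replf H$ is only an exact subcategory, and the Grassmannians in the formula parametrize only \emph{locally free} submodules, so the usual stratification-and-cancellation of ``middle terms'' has no established analogue. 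The paper is explicit that the Caldero--Chapoton/Caldero--Keller route does not seem to extend to this setting; proving the multiplication formula you need would be at least as hard as the theorem itself, so as written your argument assumes the crux rather than proving it. (Your base case and the reduction to exchange relations are fine, but they are the easy part.)

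For comparison, the paper avoids multiplication formulas entirely. It proves the recurrence of locally free $F$-polynomials and injective $\mathbf g$-vectors under sink/source reflection functors (\Cref{prop: recurrence under reflection uniform ver}, from \cite{MR4732400}) and matches it against the Derksen--Weyman--Zelevinsky recurrence of $F$-polynomials and $\mathbf g$-vectors under change of initial seed (\Cref{prop: recurrence g vector f poly dwz}). In affine type the real Schur roots split into the infinite $c$-orbits $c^r\beta_\ell$, $c^{-r}\gamma_\ell$ (handled by iterating the Coxeter functors starting from projectives and injectives, \Cref{prop: f poly g vec preprojective} and \Cref{prop: f poly g vec preinjective}) and finitely many finite $c$-orbits in tubes; the latter are anchored by a root lying in a finite-type subsystem, where the identity is imported from the finite-type result of \cite{MR3830892} together with Rupel's cluster character \cite{MR3378824}, and then propagated around the orbit by further reflections (\Cref{prop: f poly g vec finite c orb}). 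If you want to salvage your plan, you would either have to prove the missing multiplication theorem for $\replf H$, or restrict your mutation sequences to sink/source mutations --- at which point you are reconstructing the paper's argument.
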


The formula (\ref{eq: cc function introduction}) was proposed by Geiss, Leclerc and Schr\"oer in \cite{MR3830892}. It is a generalization of Caldero--Chapoton's original formula \cite{MR2250855} for Dynkin quivers to the skew-symmetrizable case in the context of $H$-modules. The formula was proven to be a cluster variable in \cite{MR3830892} when $C$ is of finite type. Since then it has been verified for all rank 2 cluster algebras in \cite{MR4732400} and for affine type $\widetilde C_n$ with the minimal symmetrizer in \cite{labardini2023gentle}.

As already pointed out in \cite{MR3830892}, it does not seem possible to extend the proof of \cite{MR2250855} (or \cite{MR2385670, MR2316979}) to our case. Instead, we seek a recurrence for the Laurent expansions of cluster variables by changing the base seed, a strategy taken in \cite{MR2629987}. Our proof of the formula (\ref{eq: cc function introduction}) uses the full power of such a recurrence under sink/source mutations developed in \cite{MR4732400} which turns out to be sufficient in all affine types.

Another key aspect of cluster algebra research is the study of their bases. Our next goal is to construct a basis $\mathcal S$ for the cluster algebra $\mathcal A(\widetilde B)$ with coefficients where $\widetilde B$ is an $m\times n$ exchange matrix extended from $B$. We impose the assumption that $\widetilde B$ is of full rank. Instead of taking the Caldero--Chapoton (CC) function of particular modules, we consider the affine space $\replf(H, \mathbf r)$ of $H$-modules of a fixed rank vector $\mathbf r$ and take the generic CC function on $\replf(H, \mathbf r)$ (see \Cref{subsec: generic f poly}). The elements $X^{\widetilde B}_{\tilde {\mathbf g}}$ in $\mathcal S$ will be parametrized by vectors $\tilde {\mathbf g}\in \mathbb Z^m$. For each $\tilde {\mathbf g}$ there is a corresponding rank vector $\mathbf v^+\in \mathbb N^n$. Then $X^{\widetilde B}_{\tilde {\mathbf g}}$ defined as in (\ref{eq: generic cc function coefficients}) takes the generic value on $\replf(H, \mathbf v^+)$ with a slightly modified form from (\ref{eq: cc function introduction}) to incorporate coefficients. Our second main result is

\begin{theorem}[\Cref{thm: generic basis}]\label{thm: second main theorem intro}
    If $C$ of affine type, the set
    \[
        \mathcal S = \{X^{\widetilde B}_{\tilde {\mathbf g}} \mid \tilde {\mathbf g} \in \mathbb Z^m \} \subseteq \mathbb Z[x_1^\pm, \dots, x_m^\pm]
    \]
    is a $\mathbb Z$-basis of the cluster algebra $\mathcal A(\widetilde B)$ containing all cluster monomials.
\end{theorem}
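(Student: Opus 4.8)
The plan is to establish three things: (1) the elements $X^{\widetilde B}_{\tilde{\mathbf g}}$ are linearly independent and their $\mathbf g$-vectors (i.e.\ the indices $\tilde{\mathbf g}$) are distinct, so the map $\tilde{\mathbf g}\mapsto X^{\widetilde B}_{\tilde{\mathbf g}}$ is injective; (2) every cluster monomial appears among the $X^{\widetilde B}_{\tilde{\mathbf g}}$; and (3) the $X^{\widetilde B}_{\tilde{\mathbf g}}$ span $\mathcal A(\widetilde B)$ over $\mathbb Z$. The first point is essentially formal once $\mathbf g$-vector bookkeeping is set up: since $\widetilde B$ has full rank, the $\mathbf g$-vector of a Laurent polynomial in $\mathcal A(\widetilde B)$ is well-defined, and distinct $\tilde{\mathbf g}$ give elements with distinct maximal (in the dominance order) Laurent monomials; linear independence then follows by a standard upper-triangularity argument. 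Point (2) is where \Cref{thm: first main theorem intro} does the real work: I would use the decomposition of a rank vector into the rank vectors of a rigid module to show that for $\tilde{\mathbf g}$ lying in the appropriate cone, the generic CC function $X^{\widetilde B}_{\tilde{\mathbf g}}$ equals the corresponding cluster monomial. Concretely, generic modules of a rank vector $\mathbf v^+$ that decomposes as a direct sum $\bigoplus M(\beta_k)^{\oplus a_k}$ of indecomposable rigid modules lying in a common cluster-tilting configuration are rigid, and the CC function is multiplicative on direct sums (the $\gr$ variety of a direct sum factors as a product, so Euler characteristics multiply); hence $X^{\widetilde B}_{\tilde{\mathbf g}} = \prod_k X_{\beta_k}^{a_k}$, which by \Cref{thm: first main theorem intro} is the cluster monomial with that $\mathbf g$-vector. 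Combined with the initial cluster variables $x_1,\dots,x_m$ (the case of negative simple $\mathbf g$-vectors, where the generic module is $0$ or projective), this yields all cluster monomials.

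The heart of the matter, and the step I expect to be the main obstacle, is point (3): spanning. Here I would invoke the general framework for generic bases: the set of $\mathbf g$-vectors $\tilde{\mathbf g}$ such that $\replf(H,\mathbf v^+)$ is nonempty is exactly $\mathbb Z^m$ (every $\mathbf g$-vector arises, because every rank vector $\mathbf v^+\in\mathbb N^n$ is realized by some locally free module, namely a generic one, and the coefficient part of $\tilde{\mathbf g}$ ranges freely), and the generic CC functions are pointwise upper-triangular with respect to cluster monomials in the dominance order on Laurent monomials. The key input is that $\mathcal A(\widetilde B)$, being of affine type with full-rank coefficients, is spanned by elements indexed by $\mathbb Z^m$ with a compatible degree/dominance filtration — this is where one needs the structure theory of affine cluster algebras (e.g.\ the fact, used implicitly via \Cref{thm: first main theorem intro} and its proof, that all cluster variables and the "imaginary" elements at the boundary of the $\mathbf g$-vector fan together span). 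The delicate part is handling the $\mathbf g$-vectors in the single non-simplicial cone of the affine $\mathbf g$-vector fan (the "tube" / regular direction), where the generic module is not rigid: one must check that the generic CC function there is a genuine element of $\mathcal A(\widetilde B)$ (not merely of the upper cluster algebra) and that together with the cluster monomials these elements are linearly independent and spanning. I would argue this by a limiting/continuity argument from the rank-two and rank-three reductions already available, or by directly identifying the generic value with the known "generalized cluster variable" attached to a homogeneous regular module, using that $H$ of affine type has a well-understood tubular / regular component structure.

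Finally, I would assemble the pieces: injectivity of $\tilde{\mathbf g}\mapsto X^{\widetilde B}_{\tilde{\mathbf g}}$, containment of all cluster monomials, and spanning together force $\mathcal S$ to be a $\mathbb Z$-basis; linear independence of $\mathcal S$ follows from the dominance-order triangularity already used for injectivity, now upgraded to the full index set $\mathbb Z^m$. One subtlety to address carefully is that the CC formula \eqref{eq: generic cc function coefficients} with coefficients must be shown to land in $\mathcal A(\widetilde B)$ itself; for cluster monomials this is automatic, and for the remaining boundary elements it follows once we know they are $\mathbb Z$-linear combinations of cluster monomials — which is exactly the affine-type phenomenon that makes the theorem true (and would fail for wild $C$). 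I expect the write-up to lean on \Cref{thm: first main theorem intro} for the cluster-monomial part and on an affine-type specific analysis (tubes, regular modules, and the single exceptional $\mathbf g$-cone) for the remainder.
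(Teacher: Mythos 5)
Your outline of points (1) and (2) is broadly consistent with what the paper does (pointedness at distinct $\tilde{\mathbf g}$ gives independence, and cluster monomials are obtained from rigid decorated modules via multiplicativity of CC functions together with the separation formula, cf.\ \Cref{rmk: cc function of cluster monomial} and \Cref{thm: separation formula}). But your point (3) — spanning — is exactly where the proposal has a genuine gap, and the routes you sketch do not close it. You propose either a ``limiting/continuity argument'' or identifying the imaginary-direction elements with known ``generalized cluster variables,'' and, for membership in $\mathcal A(\widetilde B)$, you assert it ``follows once we know they are $\mathbb Z$-linear combinations of cluster monomials.'' That last claim is false in principle: in affine type the cluster monomials do \emph{not} span $\mathcal A(\widetilde B)$, which is precisely why the generic elements along the imaginary ray are needed as additional basis elements; they cannot be linear combinations of cluster monomials, so no argument can run through that reduction. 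More generally, nothing in your sketch produces an actual spanning argument for the whole of $\mathcal A(\widetilde B)$.

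The paper's mechanism is different and concrete. Membership of every $X^{\widetilde B}_{\tilde{\mathbf g}}$ in $\mathcal A(\widetilde B)$ is proved first (\Cref{lemma: generic cc function univ laurent}): using the recurrence of generic $F$-polynomials under reflection functors (\Cref{prop: reflection generic f poly}, upgraded to coefficients in \Cref{prop: reflection generic cc function coef}), one transports $X^{\widetilde B}_{\tilde{\mathbf g}}$ along sink/source mutation sequences to a bipartite seed, checks Laurentness in all adjacent seeds, and invokes Berenstein--Fomin--Zelevinsky's upper-bound results together with $\mathcal A = \overline{\mathcal A}$ for full-rank acyclic seeds \cite{MR2110627}. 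Then both spanning and linear independence are obtained in one stroke from Qin's criterion \cite{MR4721032}: since the source sequence $t_0 \to v_1 \to \cdots \to v_n$ is injective-reachable, it suffices to verify that each $X^{\widetilde B}_{\tilde{\mathbf g}}$ is \emph{compatibly pointed} at the seeds $v_0,\dots,v_n$, which is again exactly the content of the reflection recurrence \Cref{prop: reflection generic cc function coef}. So the ``affine-type specific analysis of tubes and the exceptional $\mathbf g$-cone'' you anticipate is not how the proof goes; the affine hypothesis enters only through the existence of sink/source sequences realizing injective-reachability and the structure results already used for \Cref{thm: first main theorem intro}. Without an input of the strength of Qin's theorem (or an equivalent triangularity-with-respect-to-a-spanning-set argument that you would have to construct), your step (3) does not go through.
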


The approach of taking generic CC functions on varieties of quiver representations dates back to Dupont \cite{MR2738377} with a focus on affine quivers and especially in affine type $A$ where the generic CC functions were proven to form a basis of the cluster algebra. The basis property was extended by Ding, Xiao and Xu \cite{MR3036003} to any affine acyclic quivers. Geiss, Leclerc and Schr\"oer proved in \cite{MR2833478} that generic CC functions form a $\mathbb C$-basis of $\mathcal A(Q)\otimes_\mathbb Z \mathbb C$ when $Q$ is any acyclic quiver. Plamondon \cite{MR3061943} then constructed more generally for quivers with cycles generic CC functions parametrized by $\mathbf g$-vectors, a way of parametrization we shall take in \Cref{section: generic bases}. We note that for finite type cluster algebras, these generic functions are exactly cluster monomials. So our result can be regarded as a first attempt to generalize the construction of generic bases to the skew-symmetrizable case of infinite type using $H$-modules. The proof of \Cref{thm: second main theorem intro} relies on \Cref{prop: reflection generic f poly} the recurrence of generic functions under reflection functors, which works in general not restricted to affine types. The proof also utilizes a powerful theorem of Qin \cite{MR4721032} characterizing nice bases of \emph{injective-reachable} cluster algebras which include the affine case.

There have been various other efforts to extend representation-theoretic methods to study skew-symmetrizable cluster algebras, especially in the acyclic case such as \cite{hubery2006acyclic, MR2431998, MR2844757, MR2817677, MR3378824}. In the cause of obtaining coefficients in the Laurent expansion of cluster variables (and also generic functions), the usage of the varieties $\gr_\mathbf r^H(M)$ is novel in the current endeavor initiated in \cite{MR3830892}.

The paper is organized as follows. We first review in \Cref{section: reps of h} some representation-theoretic aspects of GLS algebras $H_K(C, D, \Omega)$. Notions that are important throughout the paper such as locally free modules and generalized reflection functors are explained in \Cref{section: reps of h}. Then we study in \Cref{section: coxeter action} the root systems of affine types and the Coxeter transformations on them. Necessary preliminaries of cluster algebras including $\mathbf g$-vectors and $F$-polynomials will be reviewed in \Cref{section: cluster and coxeter}. In \Cref{section: cc function}, we prove our first main result on the Caldero--Chapoton formula for cluster variables. The generic CC functions are constructed and proven to form bases of affine type cluster algebras in \Cref{section: generic bases}.

\section{\texorpdfstring{Representations of $H_K(C, D, \Omega)$}{Representations of H(C,D,Omega)}}\label{section: reps of h}

\subsection{\texorpdfstring{The algebra $H_K(C, D, \Omega)$}{The algebra H(C, D, Omega)}}\label{subsec: def of h}

We start by reviewing the construction of the Geiss--Leclerc--Schr\"oer (GLS) algebra $H = H_K(C, D, \Omega)$ associated to a Cartan matrix $C = (c_{ij})\in \mathbb Z^{n\times n}$, a left symmetrizer $D = \mathrm{diag}(c_i\mid i=1, \dots n)$ of $C$ and an orientation $\Omega$. Further details are referred to the original article \cite{MR3660306}.

Let $I = \{1, \dots, n\}$. The \emph{orientation} $\Omega$ of $C$ is a subset of $I\times I$ such that for any $i$ and $j$,
\[
    |\{(i, j), (j,i)\} \cap \Omega|  = \begin{dcases}
        1 & \text{if $c_{ij} < 0$} \\
        0 & \text{otherwise}.
    \end{dcases}
\]
Let $G(\Omega)$ be the oriented graph (or quiver) with vertex set $I$ and edges $(i, j)$ in $\Omega$ pointing from $j$ to $i$. Throughout the paper we only consider \emph{acyclic} orientations meaning that $G(\Omega)$ does not have oriented cycles.

Let $H_i = K[\varepsilon_i]/\varepsilon_i^{c_i}$ for each $i\in I$ with a fixed ground field $K$. For every $(i, j)\in \Omega$, define the $(H_i, H_j)$-bimodule
\[
    {_i H_j} \coloneqq \left( (H_i \otimes_K H_j)/(\varepsilon_i^{-c_{ji}/g_{ij}}\otimes 1 - 1 \otimes \varepsilon_j^{-c_{ij}/g_{ij}}) \right)^{\oplus^{g_{ij}}} \quad \text{with $g_{ij}\coloneqq \gcd(c_{ij}, c_{ji})$}.
\]
It is free of rank $-c_{ij}$ as left $H_i$-module and free of rank $-c_{ji}$ as right $H_j$-module. The algebra $H = H_K(C, D, \Omega)$ is defined to be the tensor algebra over the ring $S \coloneqq \prod_{i\in I} H_i$ of the $(S, S)$-bimodule $\bigoplus_{(i, j)\in \Omega} {_i H_j}$, that is,
\[
    H = \bigoplus_{k\geq 0} \left( \bigoplus_{(i, j)\in \Omega} {_i H_j} \right)^{\otimes_S^k}.
\]
Viewed as a $K$-algebra, $H$ is finite-dimensional. It has a description \cite[Section 1.4]{MR3660306} as the path algebra of a quiver $Q(C, \Omega)$ quotient by an ideal $I$.

\subsection{\texorpdfstring{Representations of $H_K(C, D, \Omega)$}{Representations of H(C,D,Omega)}}\label{subsec: reps of H}

Let $\rep H$ denote the category of finitely generated left $H$-modules. It is isomorphic to the category $\rep(C, D, \Omega)$ of representations of a \emph{modulated graph} $(H_i, {_i H_j})_{i\in I, (i, j)\in \Omega}$ \cite[Section 5.2]{MR3660306}. An object $M = (M_i, M_{ij}) \in \rep(C, D, \Omega)$ consists of a finitely generated $H_i$-module $M_i$ for each $i\in I$ and an $H_i$-morphism 
\[
    M_{ij}\colon {_i H_j}\otimes_{H_j} M_j \rightarrow M_i\quad \text{for each} \quad (i, j)\in \Omega.
\]
A morphism $f\colon M \rightarrow N$ is a tuple $f = (f_i \colon M_i \rightarrow N_i)_{i\in I}$ of $H_i$-morphisms intertwining with $M_{ij}$ and $N_{ij}$, that is, $f_i\circ M_{ij} = N_{ij} \circ(\mathrm{id}_{_iH_j} \otimes f_j)$ for any $(i, j)\in \Omega$.

For $(i, j)\in \Omega$, we define ${_j H_i}$ to be just ${_i H_j}$ but viewed as $H_j$-$H_i$-bimodule. There is an isomorphism from $\Hom_{H_i}({_iH_j}, H_i)$ to ${_jH_i}$ as $H_j$-$H_i$-bimodules which we shall fix (see \cite[Section 5.1]{MR3660306}). This isomorphism together with the tensor-hom adjunction induces an isomorphism
\begin{equation}\label{eq: iso between hom spaces}
    \Hom_{H_i}({_iH_j}\otimes_{H_j} M_j, M_i) = \Hom_{H_j}(M_j, {_jH_i\otimes_{H_i}M_i}).
\end{equation}
Therefore instead of using $M_{ij}$ in determining the module structure of $M$, one can equivalently use its counterpart $\overline M_{ij}\in \Hom_{H_j}(M_j, {_jH_i\otimes_{H_i}M_i})$ through the isomorphism (\ref{eq: iso between hom spaces}).

A more `compact' way to describe $M$ is to record for every $i\in I$ the $H_i$-morphism
\begin{equation}\label{eq: incoming mor}
    M_{i,\mathrm{in}} \coloneqq (M_{ij})_j \colon \bigoplus_{j\colon\!(i, j)\in \Omega} {_i H_j} \otimes_{H_j} M_j \rightarrow M_i,\quad \text{or}
\end{equation}
\begin{equation}\label{eq: outgoing mor}
    M_{i,\mathrm{out}} \coloneqq (\overline M_{ji})_j \colon M_i \rightarrow \bigoplus_{j\colon\!(j,i)\in \Omega} {_i H_j}\otimes_{H_j} M_j.
\end{equation}
These data will be useful in defining reflection functors in \Cref{subsec: reflection functor}.

\subsection{Locally free modules}\label{subsec: loc free module}

The notion of \emph{locally free module} is essential throughout the paper. An $H$-module $M\in \rep H$ is called \emph{locally free} if every $M_i$ is free (of finite rank) over $H_i$. In this case, the tuple $\rank M \coloneqq (\mathrm{rank}_{H_i}M_i)_i\in \mathbb N^n$ is called its \emph{rank vector}.

Locally free $H$-modules form a full subcategory $\replf H \subset \rep H$. They can be characterized as objects in $\rep H$ of finite injective dimension (equivalently at most $1$), equivalently of finite projective dimension (at most $1$), as $H$ is $1$-Iwanaga--Gorenstein; see \cite[Theorem 1.2]{MR3660306}. In particular, any projective or injective $H$-module in $\rep H$ is locally free.

\begin{definition}\label{def: inj and proj g-vector}
    For $M\in \rep H$ and a minimal injective co-presentation (thus a co-resolution for $M \in \replf H$ as in this case $\operatorname{inj. dim} M \leq 1$)
    \[
        0 \rightarrow M \rightarrow \bigoplus \inj_i^{a_i} \rightarrow \bigoplus \inj_i^{b_i},
    \]
    the tuple 
    \[
        \mathbf g^\mathrm{inj}_H(M) = \mathbf g^\mathrm{inj}(M) \coloneqq (b_i - a_i)_i \in \mathbb Z^n
    \]
    is called the \emph{injective $\mathbf g$-vector} of $M$.
\end{definition}

\begin{definition}\label{def: matrix b}
    We define the $n\times n$ skew-symmetrizable matrix $B = B(C, \Omega) = (b_{ij})$ by setting $b_{ij} = -c_{ij}$ and $b_{ji} = c_{ji}$ if $(i, j)\in \Omega$ and the rest entries $0$. 
\end{definition}

The following lemma follows directly from \cite[Prop 3.1 and Prop 3.5]{MR3660306}.

\begin{lemma}\label{lemma: g vector from rank vector}
    For any $M\in \replf H$ with $\rank M = (m_i)_i$, we have
    \[
        \mathbf g^\mathrm{inj}(M) = (-m_i + \sum_{j\in I} [-b_{ij}]_+ m_j)_i.
    \]
\end{lemma}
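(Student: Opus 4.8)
The plan is to unwind the definition of the injective $\mathbf g$-vector in terms of a minimal injective co-presentation and to compute both sides of the asserted identity from the cited Propositions 3.1 and 3.5 of \cite{MR3660306}. First I would recall the structure of the indecomposable injective $H$-modules $I_i$: since $H$ is $1$-Iwanaga--Gorenstein, each $I_i$ is locally free, and its rank vector is determined by the combinatorics of the quiver $Q(C,\Omega)$ (equivalently the modulated graph). Concretely, I expect $\rank I_i$ to have $i$-th component $1$ and, for $j \neq i$, $j$-th component equal to the multiplicity with which $H_i$ appears through the bimodules along arrows out of $i$ — this is exactly the data encoded by the entries $[-b_{ij}]_+$ (recall $b_{ij} = -c_{ij}$ when $(i,j) \in \Omega$, so $[-b_{ij}]_+ = -c_{ij} = [c_{ij}]_-$ counts the relevant arrows). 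This is the content of \cite[Prop 3.1]{MR3660306} or its analogue, and I would quote it rather than reprove it.

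Next I would use additivity of rank vectors on short exact sequences of locally free modules (ranks are additive because the $H_i$ are Frobenius and free modules are both projective and injective over $H_i$, so short exact sequences of locally free modules split over each $H_i$). Applying this to the minimal injective co-resolution $0 \to M \to \bigoplus_i I_i^{a_i} \to \bigoplus_i I_i^{b_i} \to 0$ (it is a genuine short exact sequence since $\operatorname{inj.dim} M \le 1$), I get
\[
    \rank M = \sum_i (a_i - b_i)\, \rank I_i = -\sum_i g^{\mathrm{inj}}_i(M)\, \rank I_i,
\]
where $g^{\mathrm{inj}}_i(M) = b_i - a_i$. Writing this out componentwise with the formula for $\rank I_i$ from the previous step gives a linear system relating $(m_i)_i = \rank M$ to $\mathbf g^{\mathrm{inj}}(M)$. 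Because the matrix expressing the $\rank I_i$ in the standard basis is unitriangular with respect to any ordering of $I$ refining the acyclic orientation (the $i$-th entry of $\rank I_i$ is $1$ and off-diagonal contributions only go "downstream"), this system is invertible, and solving it yields precisely $g^{\mathrm{inj}}_i(M) = -m_i + \sum_{j} [-b_{ij}]_+ m_j$. Alternatively, and perhaps more cleanly, I would verify the claimed formula directly: check that the right-hand side is additive in $M$ on short exact sequences of locally free modules (immediate, since it is linear in $\rank M$) and that it agrees with $\mathbf g^{\mathrm{inj}}$ on each indecomposable injective $I_i$, where $\mathbf g^{\mathrm{inj}}(I_i)$ is the $i$-th standard basis vector by definition; since every $M \in \replf H$ has a two-term injective co-resolution, these two checks suffice. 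This reduces everything to the single identity $[-m_i + \sum_j [-b_{ij}]_+ m_j]_{i}\big|_{M = I_i} = e_i$, i.e. to knowing $\rank I_i$, which is exactly \cite[Prop 3.5]{MR3660306}.

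The main obstacle is essentially bookkeeping: matching the conventions of \cite{MR3660306} for injective modules, left versus right symmetrizers, and the sign/orientation conventions in \Cref{def: matrix b} so that the arrows contributing to $\rank I_i$ are indeed those indexed by $[-b_{ij}]_+$ rather than $[b_{ij}]_+$. Once the formula for $\rank I_i$ is pinned down with the correct orientation convention, the rest is a short linear-algebra argument (or the two-point additivity check), so I would keep the write-up brief and defer the structural facts about injectives entirely to the quoted propositions.
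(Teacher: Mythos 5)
Your overall skeleton is reasonable, and indeed the paper offers no written argument beyond citing \cite[Prop.~3.1 and 3.5]{MR3660306}, so fleshing it out via additivity is a legitimate plan: rank vectors and $\mathbf g^\mathrm{inj}$ are both additive on short exact sequences of locally free modules, $\mathbf g^\mathrm{inj}$ does not change if the minimal copresentation is replaced by any two-term coresolution, and hence it suffices to verify the formula on each indecomposable injective $I_i$. The problem is that both concrete inputs you feed into this reduction are wrong. First, $\rank I_i$ is \emph{not} $e_i + \sum_j [-b_{ij}]_+ e_j$: the injective $I_i$ is supported on every vertex admitting a path to $i$ in $G(\Omega)$, with ranks obtained by iterating the neighbour rule along paths, not by applying it once. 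Already in type $A_2$ with $\Omega=\{(1,2)\}$ one has $\rank I_1=(1,1)$, while your description gives $(1,0)$. (Relatedly, the parenthetical ``$b_{ij}=-c_{ij}$ for $(i,j)\in\Omega$, so $[-b_{ij}]_+=-c_{ij}$'' is backwards: for $(i,j)\in\Omega$ one has $[-b_{ij}]_+=0$, and $[-b_{ij}]_+=-c_{ij}$ precisely when $(j,i)\in\Omega$; this is more than convention bookkeeping, since it changes which vertices contribute.) Second, with \Cref{def: inj and proj g-vector} one has $\mathbf g^\mathrm{inj}(I_i)=-e_i$, not $+e_i$, so the identity your argument reduces to should read $-(\rank I_i)_k+\sum_j[-b_{kj}]_+(\rank I_i)_j=-\delta_{ki}$. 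With your claimed rank vectors this identity fails (the $A_2$ example above already shows it), so the ``single identity'' on which your proof rests is false as stated and the argument does not close.

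The repair is to quote the correct structural fact about injectives. Either use the recursion $(\rank I_i)_k=\delta_{ki}+\sum_j[-b_{kj}]_+(\rank I_i)_j$, which is exactly what the explicit description of $I_i$ in \cite{MR3660306} gives (decompose $(I_i)_k$ according to the arrows out of $k$); then your additivity argument goes through once the sign of $\mathbf g^\mathrm{inj}(I_i)$ is corrected, and the unitriangularity step becomes unnecessary. Or, more directly, dualize the standard tensor-algebra resolution of the right module $DM$, using that ${}_iH_j$ is free of rank $-c_{ji}$ as a right $H_j$-module, to obtain for any $M\in\replf H$ with $\rank M=(m_i)_i$ a coresolution
\[
0 \to M \to \bigoplus_{i\in I} I_i^{m_i} \to \bigoplus_{i\in I} I_i^{\sum_{j\in I}[-b_{ij}]_+ m_j} \to 0,
\]
from which the lemma is immediate because $\mathbf g^\mathrm{inj}$ is independent of the chosen coresolution; this is presumably the ``direct'' deduction from \cite[Prop.~3.1 and 3.5]{MR3660306} that the paper has in mind.
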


The rank vector of a locally free $H$-module $M$ can be recovered from $\mathbf g^\mathrm{inj}(M) = (g_i)_i$ in the following way. Since $\Omega$ is acyclic, using \Cref{lemma: g vector from rank vector}, one can start with $m_i = -g_i$ for a vertex $i$ without any successor (called a \emph{sink}) in the graph $G(\Omega)$ and inductively solving $m_k$ from equations
\begin{equation}\label{eq: from g vector to rank}
    m_k = -g_k + \sum_{j\in I} [-b_{kj}]_+ m_j.
\end{equation}

\subsection{Reflection functors and Coxeter functors}\label{subsec: reflection functor}

The classical Bernstein--Gel'fand--Ponomarev \cite{MR0393065} reflection functor is an efficient way to relate representations of different acyclic orientations of Dynkin diagrams that simulates simple reflections on the root lattice. We here review Geiss--Leclerc--Schr\"oer's generalized reflection functor \cite[Section 9]{MR3660306} on $H$-modules.

An index $i\in I$ is a \emph{sink} (resp. \emph{source}) of $\Omega$ if $(j, i)\notin \Omega$ (resp. $(i, j)\notin \Omega$) for any $j$, that is, a \emph{sink} (resp. \emph{source}) in the oriented graph $G(\Omega)$. The \emph{reflection} $s_i$ of $\Omega$ at $i$ is the orientation
\[
    s_i(\Omega) \coloneqq \Omega \setminus \{(i, j), (j, i) \mid j\in I\} \cup \{(j, i) \mid (i, j)\in \Omega\} \cup \{(i, j) \mid (j, i)\in \Omega\}.
\]
Namely, the graph $G(s_i(\Omega))$ is simply reversing edges in $G(\Omega)$ incident to $i$. We will only perform reflections at a sink or source, in which case $s_i(H)$ denotes $H(C, D, s_i(\Omega))$.

We define \emph{reflection functors} 
\[
    F_i^+ \ (\text{resp. } F_i^- )\  \colon \rep H \rightarrow \rep s_i(H)
\]
when $i$ is a sink (resp. source) as follows. Let $M$ be in $\rep H$ and $i$ be a sink. Consider the $H_i$-morphism $M_{i, \mathrm{in}}$ as in (\ref{eq: incoming mor}). The $s_i(H)$-module $M' = F_i^+(M)$ is defined to have $M'_j = M_j$ for $j\neq i$ and $M'_i = \ker M_{i, \mathrm{in}}$. Furthermore, the structure morphism  
\[
    M'_{i, \mathrm{out}} \colon M'_i \ \rightarrow \bigoplus_{j\colon\! (j, i)\in s_i(\Omega)} {_iH_j}\otimes_{H_j} M'_j
\]
is defined to be the natural inclusion 
\[
    \ker M_{i, \mathrm{in}} \hookrightarrow \bigoplus_{j\colon\! (i, j)\in \Omega} {_iH_j}\otimes_{H_j} M_j = \bigoplus_{j\colon\! (j,i)\in s_i(\Omega)} {_iH_j}\otimes_{H_j} M'_j;
\]
and $M'_{kj} \coloneqq M_{kj}$ if neither $k$ nor $j$ is $i$.

When $i$ is a source, then $M' = F_i^-(M)$ has $M'_j = M_j$ for $j\neq i$ and $M'_i = \coker M_{i, \mathrm{out}}$. The structure morphism $M'_{i, \mathrm{in}}$ is defined to be the natural projection
\[
    \bigoplus_{j\colon\! (j, i)\in \Omega} {_iH_j}\otimes_{H_j} M_j = \bigoplus_{j\colon\! (i,j)\in s_i(\Omega)} {_iH_j}\otimes_{H_j} M'_j \rightarrow \coker M_{i; \mathrm{out}};
\]
and $M'_{kj} \coloneqq M_{kj}$ if neither $k$ nor $j$ is $i$.

Let $E_i\in \replf H$ be $(E_i)_i = H_i$ and $(E_i)_j = 0$ for $j\neq i$. It will be referred to as the \emph{pseudo-simple} module at $i\in I$. It is then clear that any $E_i^{\oplus m}$ is annihilated by $F_i^\pm$ when the reflection functors apply.

\begin{proposition}[{\cite[Proposition 9.6]{MR3660306}}]\label{prop: reflection of loc free and rigid}
    Let $M\in \rep H$ be locally free and rigid. Then $F_k^\pm(M)$ is locally free and rigid.
\end{proposition}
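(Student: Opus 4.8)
The plan is to verify separately that $F_k^\pm(M)$ is locally free and that it is rigid, since these are logically independent assertions and the arguments are of different flavors. I will treat the sink case $F_k^+$ in detail; the source case $F_k^-$ follows by the duality $M \mapsto DM$ (the $K$-dual) which interchanges sinks and sources and exchanges $F_k^+$ with $F_k^-$, or simply by a symmetric argument.

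For local freeness, recall that by Definition~\ref{def: matrix b} and the characterization of $\replf H$ as the modules of projective dimension at most $1$, it suffices to show $\operatorname{proj.dim} F_k^+(M) \leq 1$, or equivalently that each component $F_k^+(M)_i$ is free over $H_i$. For $i \neq k$ this is immediate since $F_k^+(M)_i = M_i$ is already free. The essential point is therefore to show that $\ker M_{k,\mathrm{in}}$ is free over $H_k = K[\varepsilon_k]/\varepsilon_k^{c_k}$. Here is where rigidity of $M$ must enter: I expect to use that for a rigid locally free module the structure map $M_{k,\mathrm{in}}\colon \bigoplus_{j\colon (k,j)\in\Omega} {_kH_j}\otimes_{H_j}M_j \to M_k$ at a sink $k$ is surjective (an ``$E_k$-filtered cokernel is forced to vanish'' type statement following from $\operatorname{Ext}^1$-vanishing, cf.\ the discussion around pseudo-simple modules $E_k$). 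Granting surjectivity, the short exact sequence
\[
    0 \to \ker M_{k,\mathrm{in}} \to \bigoplus_{j} {_kH_j}\otimes_{H_j}M_j \to M_k \to 0
\]
has free outer terms over $H_k$ (the middle because ${_kH_j}$ is free as a left $H_k$-module and $M_j$ arbitrary makes the tensor product a sum of modules induced from $H_k$ — one should check this is $H_k$-free, using that $c_k \mid$ the relevant exponents, or invoke \cite[Section 5]{MR3660306}), hence $\ker M_{k,\mathrm{in}}$ is a submodule of a free $H_k$-module; over the self-injective local ring $H_k$ this is not automatically free, so one finishes by a dimension count: $\dim_K \ker M_{k,\mathrm{in}}$ is divisible by $c_k = \dim_K H_k$ because both other terms have $K$-dimension divisible by $c_k$, and a submodule of a free $H_k$-module with $K$-dimension a multiple of $c_k$ must be free (since $H_k$ is uniserial, any module decomposes as $\bigoplus H_k/\varepsilon_k^{a}$ and the free summands are the only ones contributing a full multiple of $c_k$ — more carefully, $\operatorname{rank}$ over $H_k$ of a submodule of $(H_k)^N$ equals $\dim_K/c_k$ iff it is free).

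For rigidity, I would compute $\operatorname{Ext}^1_{s_k(H)}(F_k^+(M), F_k^+(M))$ and compare it with $\operatorname{Ext}^1_H(M,M)$. The standard BGP-type mechanism (adapted to this setting in \cite[Section 9]{MR3660306}) is that $F_k^+$ and $F_k^-$ restrict to mutually inverse equivalences between the subcategory of $\rep H$ of modules with no direct summand isomorphic to $E_k$ and the corresponding subcategory of $\rep s_k(H)$, and these equivalences preserve $\operatorname{Ext}^1$ on locally free modules — indeed on locally free modules $F_k^\pm$ induce the simple reflection $s_k$ on rank vectors while preserving the relevant homological bilinear form. Since $M$ is rigid it has no self-extensions, so in particular $M \cong M' \oplus E_k^{\oplus m}$ where $M'$ has no $E_k$-summand (possibly $m=0$); then $F_k^+(M) = F_k^+(M')$ with $F_k^+(M')$ rigid because $F_k^+$ is fully faithful on this subcategory and sends $M'$ to a module whose $\operatorname{Ext}^1$ with itself is identified with $\operatorname{Ext}^1_H(M',M') \subseteq \operatorname{Ext}^1_H(M,M) = 0$.

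The main obstacle I anticipate is the local-freeness half, specifically pinning down the surjectivity of $M_{k,\mathrm{in}}$ at a sink for rigid locally free $M$ and then the $H_k$-freeness of $\ker M_{k,\mathrm{in}}$: over the non-semisimple ring $H_k$ a submodule of a free module need not be free, so the divisibility-of-dimensions argument (or equivalently a rank argument on locally free resolutions) is doing real work and must be stated carefully. A clean alternative, which I would fall back on if the direct approach gets delicate, is purely homological: show $\operatorname{proj.dim}_{s_k(H)} F_k^+(M) \leq 1$ by building an explicit projective presentation of $F_k^+(M)$ from one of $M$ under the reflection, using that reflection functors are exact on the relevant subcategories and send projectives to projectives up to $E_k$-summands; then local freeness is automatic from \cite[Theorem 1.2]{MR3660306}. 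Either way, rigidity then follows formally from the equivalence-of-categories statement, so the homological route handles both assertions at once and is probably the cleanest to write.
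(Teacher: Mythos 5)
The paper gives no proof of this statement at all---it is quoted verbatim from \cite[Proposition 9.6]{MR3660306}---so your attempt has to stand on its own, and as written it has a genuine gap at exactly the point you yourself identify as essential. Your local-freeness argument needs surjectivity of $M_{k,\mathrm{in}}$ at the sink $k$, but you only say you ``expect'' this to follow from rigidity via an ``$E_k$-filtered cokernel must vanish'' principle. First, for $M$ itself the claim is false: at a sink $k$ the pseudo-simple $E_k$ is projective, so rigid locally free modules can perfectly well have $E_k$-summands, for which $M_{k,\mathrm{in}}$ is not surjective (you split off $E_k^{\oplus m}$ only in the rigidity half, not here). Second, even for the $E_k$-free part the deduction is not routine: rigidity gives $\operatorname{Ext}^1_H(M,M)=0$, not $\operatorname{Ext}^1_H(M,E_k)=0$, and the classical BGP trick---split the cokernel of $M_{k,\mathrm{in}}$ off as a direct summand and contradict indecomposability---fails here because the image of $M_{k,\mathrm{in}}$ is merely an $H_k$-submodule of the free module $M_k$, not automatically an $H_k$-direct summand. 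This surjectivity is precisely the nontrivial content of the proposition (in the paper it appears as part (2) of \Cref{prop: tau loc free reflection}, whose proof \emph{uses} local freeness of $F_k^+(M)$), so leaving it as a plausible lemma, or citing \cite[Section 9]{MR3660306} for it, either leaves the crux unproved or makes the argument circular.

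In addition, the step you use to conclude that $\ker M_{k,\mathrm{in}}$ is $H_k$-free is incorrect as stated: over $H_k=K[\varepsilon_k]/(\varepsilon_k^{c_k})$ a submodule of a free module whose $K$-dimension is divisible by $c_k$ need not be free, e.g.\ $\varepsilon_kH_k\oplus\varepsilon_kH_k\subset H_k^{\oplus 2}$ for $c_k=2$. The correct (and easier) argument is that once $M_{k,\mathrm{in}}$ surjects onto the free, hence projective, $H_k$-module $M_k$, the sequence splits, so $\ker M_{k,\mathrm{in}}$ is a direct summand of the free $H_k$-module $\bigoplus_j {_kH_j}\otimes_{H_j}M_j$ and is therefore free because $H_k$ is local. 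Your rigidity half is reasonable in outline, but note that ``preserving the homological bilinear form'' does not by itself preserve $\operatorname{Ext}^1$: you need full faithfulness of the reflection on the $E_k$-free subcategory (to preserve $\Hom$) together with local freeness of $F_k^+(M')$ so that the Euler form computes $\dim\Hom-\dim\operatorname{Ext}^1$; in other words this half also rests on the unproven first half, and the subcategory-equivalence and Ext-preservation facts you invoke are themselves part of what \cite[Section 9]{MR3660306} establishes in proving Proposition 9.6.
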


Without loss of generality, we assume that $(i, j)\in \Omega$ implies that $i < j$. Therefore the vertex $1$ is always a sink and $n$ always a source. Notice that $s_n\cdots s_1(\Omega) = s_1\cdots s_n(\Omega) = \Omega$. Define the \emph{Coxeter functors}
\[
    C^+ \coloneqq F_n^+ \circ \cdots \circ F_1^+ \quad \text{and} \quad C^- \coloneqq F_1^- \circ \cdots \circ F_n^- \colon \rep H \rightarrow \rep H.
\]

There is also the \emph{twist automorphism functor} $T \colon \rep H \rightarrow \rep H$ such that
\[
    T(M)_i = M_i\quad \text{and} \quad T(M)_{ij} = -M_{ij} \coloneqq {_iH_j}\otimes_{H_j} M_j \rightarrow M_i,
\]
and $T$ does not change morphisms.

\begin{theorem}[{\cite[Theorem 10.1]{MR3660306}}]\label{thm: coxeter functors AR translation}
    For any $M\in \replf H$, there are functorial isomorphisms
    \[
        TC^+(M) \cong \tau(M) \quad \text{and} \quad TC^-(M) \cong \tau^-(M)  
    \]
    where $\tau$ (resp. $\tau^-$) is the Auslander--Reiten (resp. inverse) translation of $H$-modules.
\end{theorem}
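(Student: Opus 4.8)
The plan is to follow the classical Bernstein--Gel'fand--Ponomarev strategy identifying the Coxeter functor with the Auslander--Reiten translation, the only new feature being that one must keep careful track of the twist $T$, which is forced by the bimodule structure maps ${}_iH_j$. Since $C^\pm$, $T$, $\tau$ and $\tau^-$ are additive, it suffices to prove $TC^+(M)\cong\tau(M)$ for every indecomposable $M\in\replf H$; the isomorphism $TC^-(M)\cong\tau^-(M)$ then follows by applying the $K$-linear duality $D=\Hom_K(-,K)$, which interchanges $\replf H$ with $\replf H^{\mathrm{op}}$, sinks with sources, $C^+$ with $C^-$, and $\tau$ with $\tau^-$.

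First I would record the properties of a single reflection functor $F_i^+$ at a sink $i$ that are needed below, all of which either appear in \cite[Section~9]{MR3660306} or follow at once from the definitions in \Cref{subsec: reflection functor}: $F_i^+$ preserves local freeness and rigidity (\Cref{prop: reflection of loc free and rigid}) and acts on rank vectors by the simple reflection $s_i$, so $C^\pm$ realise the Coxeter element on $\mathbb{Z}^n$; it annihilates $E_i$, and on the full subcategory of modules with no summand $E_i$ the functors $F_i^+$ and $F_i^-$ are mutually quasi-inverse equivalences onto the corresponding subcategory over $s_i(H)$, in particular preserving indecomposability; the indecomposable projective at a sink is the pseudo-simple there, and $F_i^+(P_j^H)\cong P_j^{s_i(H)}$ for $j\neq i$; and $F_i^+$ is left exact, being the identity away from vertex $i$, where it forms a kernel. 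Using the last two items and induction along the ordering $1,\dots,n$, I claim $C^+(P)=0$ for every projective $H$-module $P$: after applying $F_{k-1}^+\circ\cdots\circ F_1^+$, the module $P_k^H$ has become $P_k^{s_{k-1}\cdots s_1(H)}\cong E_k$ because $k$ is a sink for $s_{k-1}\cdots s_1(\Omega)$, and $E_k$ is killed by $F_k^+$. Dually $C^-$ kills every injective. Thus $TC^+$ and $\tau$ agree (both being zero) on projectives, and by the properties above we may from now on assume that $M$ is indecomposable non-projective and that $C^+(M)$ is again indecomposable, locally free and non-projective.

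For such $M$ choose a minimal projective presentation $0\to P_1\xrightarrow{f}P_0\to M\to 0$, which is a resolution since $\operatorname{pd}M\le 1$. Tracking the reflection functors through this sequence --- equivalently, applying the right-derived functors of the left-exact functor $C^+$ and using $C^+(P_i)=0$ --- gives $C^+(M)\cong\ker\!\bigl(R^1C^+(P_1)\to R^1C^+(P_0)\bigr)$. The crucial step is the identification $R^1C^+(P_j)\cong T(I_j)$, where $\nu$ denotes the Nakayama functor and $I_j=\nu(P_j)$ the indecomposable injective at vertex $j$. Granting this, the Auslander--Reiten formula $\tau(M)=\ker(\nu P_1\to\nu P_0)$, exactness of $T$ and the relation $T^2=\mathrm{id}$ give
\[
    C^+(M)\;\cong\;\ker\!\bigl(T\nu P_1\to T\nu P_0\bigr)\;\cong\;T\ker(\nu P_1\to\nu P_0)\;=\;T\tau(M),
\]
so $TC^+(M)\cong T^2\tau(M)\cong\tau(M)$; this also makes transparent why the twist has to appear. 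One must still verify naturality in $M$ of all the isomorphisms used, but each of them is natural.

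The one genuinely new computation, and the step I expect to be the main obstacle, is $R^1C^+(P_j)\cong T(I_j)$. I would prove it by unwinding $F_k^+\circ\cdots\circ F_1^+(P_j)$ together with its connecting morphism step by step, after fixing compatible $K$-bases of the bimodules ${}_iH_j$ as in \cite[Section~5]{MR3660306}; the twist enters precisely because $F^+$ is built from the incoming maps $M_{i,\mathrm{in}}$ whereas the injectives $I_j$ are naturally presented through outgoing maps, and passing between the two via the adjunction isomorphism (\ref{eq: iso between hom spaces}) introduces a sign in the structure maps. Alternatively, one can avoid derived functors entirely by showing that the reflection equivalences above carry almost split sequences to almost split sequences except at the pseudo-simple and twisted projective--injective ends, and then reading off $TC^+\cong\tau$ from the defining property of the Auslander--Reiten translation after one full turn of the Coxeter element. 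In either route, once the bimodules ${}_iH_j$ and the accompanying sign are under control the argument runs parallel to the hereditary case.
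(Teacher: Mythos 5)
The paper does not prove this statement at all: it is quoted verbatim from Geiss--Leclerc--Schr\"oer \cite[Theorem 10.1]{MR3660306}, whose proof proceeds by an explicit computation of $\tau=D\mathrm{Tr}$ from a canonical projective presentation of a locally free module and a direct, structure-map-by-structure-map comparison with the iterated-kernel description of $C^+$ (this is where the twist $T$ is seen to appear), not by the derived-functor route you propose. Judged on its own, your skeleton is mostly sound: the reduction to indecomposables, the duality reduction of $TC^-\cong\tau^-$ to $TC^+\cong\tau$, the fact that $C^+$ kills projectives, the exact sequence $0\to C^+(M)\to R^1C^+(P_1)\to R^1C^+(P_0)$, and the formula $\tau(M)=\ker(\nu P_1\to\nu P_0)$ are all fine. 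But the step you yourself single out as crucial, namely $R^1C^+(P_j)\cong T(I_j)$, is false, and the proof collapses there.

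Concretely, take $C$ of type $A_2$ with trivial symmetrizer and $\Omega=\{(1,2)\}$, so $H=KQ$ for the quiver $2\to 1$ (here $T\cong\mathrm{id}$, so the twist is irrelevant). Then $P_1=S_1$, $P_2=I_1$ is projective--injective, and $I_2=S_2$. Since $P_2$ is injective, $R^1C^+(P_2)=0$, whereas $T(I_2)=S_2\neq 0$. Likewise, from the injective coresolution $0\to S_1\to I_1\to I_2\to 0$ and the computations $C^+(I_1)=0$ (it is projective) and $C^+(I_2)\cong S_1$, one gets $R^1C^+(P_1)=\coker\bigl(C^+(I_1)\to C^+(I_2)\bigr)\cong S_1$, which is not isomorphic to $T(I_1)$ (wrong rank vector). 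The conceptual obstruction is that $R^1C^+$ vanishes on every injective while $T\nu$ does not, and $\nu$ does not kill projective--injectives while $C^+$ does; the two sides of your identification only agree after taking the kernels in your display, which is essentially the statement being proved, so the individual terms cannot be matched the way you need (and even if they could, you would still have to check that the connecting map $R^1C^+(f)$ corresponds to $T\nu(f)$). To salvage the argument you would have to replace this lemma by an honest comparison, e.g.\ computing $D\mathrm{Tr}(M)$ from the canonical presentation $0\to\bigoplus_{(i,j)\in\Omega}He_i\otimes_{H_i}{}_iH_j\otimes_{H_j}e_jM\to\bigoplus_{i\in I}He_i\otimes_{H_i}e_iM\to M\to 0$ and matching it with the iterated kernels defining $C^+$ (the route of \cite{MR3660306}), or by carrying out in detail your alternative suggestion of tracking almost split sequences through the reflection equivalences; as written, neither is done.
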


\begin{definition}\label{def: tau loc free}
    An $H$-module $M$ is called \emph{$\tau$-locally free} if $\tau^k(M)$ is locally free for any $k\in \mathbb Z$.
\end{definition}

We set up a lattice $\mathbb Z^n$ with a symmetric bilinear form $(\alpha_i, \alpha_j) = c_ic_{ij}$ on the standard basis $\{\alpha_1, \dots, \alpha_n\}$. Associated to each $\beta \in \mathbb R^n$ with $(\beta, \beta)\neq 0$ is the reflection
\[
    s_{\beta} \in \operatorname{GL}_n(\mathbb R),\quad s_{\beta}(u) \coloneqq u - \frac{2(\beta, u)}{(\beta, \beta)} \beta \quad {\text{for $u\in \mathbb R^n$}}.
\]
The \emph{simple reflections} $s_i\coloneqq s_{\alpha_i}$ for $i\in I$ generate the Weyl group $W = W(C) < \operatorname{GL}_n(\mathbb R)$. The linear transformation $c = s_1s_2\cdots s_n$ is called the \emph{Coxeter element} (in $W$) associated to $\Omega$. Notice that the action of $W$ preserves the bilinear form $(-, -)$.

An \emph{admissible sequence} for $(C, \Omega)$ is a tuple $((i_1, p_1),\dots, (i_t, p_t)) \in (I \times \{+, -\})^t$ such that for any $1 \leq s \leq t$, either $i_s$ is a sink of $\Omega_s \coloneqq s_{i_{s-1}}\cdots s_{i_0}(\Omega)$ and $p_s = +$ or $i_s$ is a source of $\Omega_s$ and $p_s = -$, where $s_{i_0}(\Omega)$ formally denotes $\Omega$.

The following proposition summarizes properties of $\tau$-locally free modules under reflections.

\begin{proposition}\label{prop: tau loc free reflection}
    Let $M \in \rep H$ be indecomposable and $\tau$-locally free. 
    \begin{enumerate}
        \item For any admissible sequence $((i_1, p_1),\dots, (i_t, p_t))$ for $(C, \Omega)$, the module $N  = F_{i_t}^{p_t}\cdots F_{i_1}^{p_1}(M)$ is again indecomposable and $\tau$-locally free.
        \item Let $k$ be a sink (resp. source) of $\Omega$, then either $M = E_k$ or the $H_k$-morphism $M_{k,\mathrm{in}}$ (resp. $M_{k, \mathrm{out}}$) is surjective (resp. injective).
        \item If $N\neq 0$, then $\rank N = s_{i_t}\cdots s_{i_1}(\rank M)$. In particular, if $\tau M \neq 0$, then $\rank (\tau M) = c^{-1}(\rank M)$.
    \end{enumerate}
\end{proposition}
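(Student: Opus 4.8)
The plan is to prove the three assertions in the order (2), (1), (3): part (1) will be reduced by induction on $t$ to a single reflection $N=F_k^{\pm}(M)$, and that case uses (2), so I start with (2), treating the sink case (the source case being dual). Let $k$ be a sink, $M$ indecomposable and $\tau$-locally free with $M\not\cong E_k$. The first step is to show that $F_k^+(M)$ is locally free. Because reflection functors at non-adjacent vertices commute and successively reflecting at sinks returns to $\Omega$, there is a presentation $C^+\cong F^+_{j_n}\circ\cdots\circ F^+_{j_2}\circ F_k^+$ in which none of $F^+_{j_2},\dots,F^+_{j_n}$ touches the vertex $k$; hence $(F_k^+M)_k=(C^+M)_k=(\tau M)_k$ (the last equality because $C^+\cong T\tau$ by \Cref{thm: coxeter functors AR translation} and $T$ leaves the underlying $H_i$-modules unchanged), which is $H_k$-free because $M$ is $\tau$-locally free. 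So $\ker M_{k,\mathrm{in}}$ is $H_k$-free. Now $\bigoplus_{j\colon(k,j)\in\Omega}{}_kH_j\otimes_{H_j}M_j$ and $M_k$ are $H_k$-free, and over $H_k=K[\varepsilon_k]/(\varepsilon_k^{c_k})$ a finitely generated free submodule of a free module is a direct summand and a direct summand of a free module is free; applying this twice along the factorisation $\bigoplus\twoheadrightarrow\operatorname{im}M_{k,\mathrm{in}}\hookrightarrow M_k$ shows that $\operatorname{im}M_{k,\mathrm{in}}$, and then $\coker M_{k,\mathrm{in}}$, are $H_k$-free. A free complement of $\operatorname{im}M_{k,\mathrm{in}}$ in $M_k$ then exhibits $M\cong M''\oplus E_k^{\oplus a}$ with $M''_k=\operatorname{im}M_{k,\mathrm{in}}$ and $a$ the $H_k$-rank of $\coker M_{k,\mathrm{in}}$; indecomposability and $M\not\cong E_k$ force $a=0$, i.e.\ $M_{k,\mathrm{in}}$ is surjective.

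For part (1) it suffices by induction on $t$ to treat a single reflection; say $N=F_k^+(M)$ with $k$ a sink. If $M\cong E_k$ then $N=0$ and there is nothing to prove; otherwise $M\not\cong E_k$, and then $M$ is not supported only at $k$ (else it would be a power of $E_k$), so $N\neq 0$. Local freeness of $N$ is exactly the first step of the proof of (2). For indecomposability, (2) gives that $M_{k,\mathrm{in}}$ is surjective, so $M$ has no summand $\cong E_k$ and the Bernstein--Gel'fand--Ponomarev reconstruction (carried out over $H_k$) gives $F_k^-F_k^+(M)\cong M$; also $(F_k^+M)_{k,\mathrm{out}}$ is the inclusion $\ker M_{k,\mathrm{in}}\hookrightarrow\bigoplus_j{}_kH_j\otimes_{H_j}M_j$, so $N$ has no summand $\cong E_k$ either. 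Thus if $N=N_1\oplus N_2$ then $M\cong F_k^-N_1\oplus F_k^-N_2$, indecomposability of $M$ annihilates a factor, say $F_k^-N_2=0$; as $N_2$ is locally free and supported at $k$ this forces $N_2\cong E_k^{\oplus a}$, and then $N_1\cong F_k^+F_k^-N_1\cong F_k^+M=N$, so $N_2=0$. For $\tau$-local freeness one uses the functorial isomorphism $\tau_{s_kH}\circ F_k^+\cong F_k^+\circ\tau_H$ on indecomposables not isomorphic to $E_k$ (both sides being $0$ otherwise), a consequence of the identification of Coxeter functors with Auslander--Reiten translations (\Cref{thm: coxeter functors AR translation}) and of the compatibility of $C^{\pm}$ with $F_k^{\pm}$; iterating this isomorphism in both directions and using that $F_k^+$ sends an indecomposable $\tau$-locally free module (or $E_k$, or $0$) to a locally free module (or $0$), one concludes that $\tau_{s_kH}^{\ell}(N)$ is locally free for every $\ell\in\mathbb{Z}$. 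The source case is dual, and the general case follows by applying the one-step statement over the successive orientations $\Omega_s$.

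For part (3), if $N\neq 0$ then every intermediate module $N_s$ is nonzero, hence by (1) indecomposable and $\tau$-locally free (as an $H(C,D,\Omega_s)$-module), and $N_{s-1}\not\cong E_{i_s}$; so by (2) the relevant structure map of $N_{s-1}$ is surjective (resp.\ injective), and the short exact sequence defining $F_{i_s}^{p_s}$, together with a count of the ranks of the free $H_{i_s}$-modules occurring (cf.\ \Cref{lemma: g vector from rank vector}), gives $\rank N_s=s_{i_s}(\rank N_{s-1})$. Composing over $s=1,\dots,t$ gives $\rank N=s_{i_t}\cdots s_{i_1}(\rank M)$. For the last claim, apply this to the admissible sequence $((1,+),\dots,(n,+))$, whose composite is $C^+$: using that $T$ fixes rank vectors and that $s_n\cdots s_1=(s_1\cdots s_n)^{-1}=c^{-1}$, one obtains $\rank(\tau M)=c^{-1}(\rank M)$ whenever $\tau M\neq 0$.

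The step I expect to be the main obstacle is the $\tau$-local freeness clause of part (1): unlike local freeness or indecomposability of a single module, it constrains the entire $\tau$-orbit, so it genuinely needs the functorial comparison between the Auslander--Reiten translations of $H$ and of $s_kH$ via the reflection functors, together with a careful treatment of the degenerate cases where a $\tau$-translate vanishes or becomes a copy of $E_k$ (i.e.\ where the module is projective or injective). A secondary subtle point is the passage in (2) from ``$\ker M_{k,\mathrm{in}}$ free'' to ``$\coker M_{k,\mathrm{in}}$ free'', which relies on the special module theory of the truncated polynomial ring $H_k$ rather than on a general homological argument.
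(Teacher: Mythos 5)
Your proposal is correct in substance, but it takes a noticeably different logical route from the paper. The paper does not prove part (1) at all: it quotes it directly from Geiss--Leclerc--Schr\"oer \cite[Proposition 11.8]{MR3660306}, then deduces (2) from (1) (local freeness of $F_k^+(M)$ gives freeness of $\ker M_{k,\mathrm{in}}$, hence of the image, and surjectivity follows from indecomposability exactly as you argue), and obtains (3) from the definition of the reflection functors together with \Cref{thm: coxeter functors AR translation}. You instead prove (2) first, \emph{without} (1), by identifying $\ker M_{k,\mathrm{in}}=(F_k^+M)_k=(C^+M)_k\cong(\tau M)_k$ via the admissible reordering of $C^+$ and the fact that $T$ preserves underlying $H_i$-modules, and then using self-injectivity of $H_k$ to split off $E_k$-summands; this is a clean way to break the circularity that would otherwise arise from using (2) inside a proof of (1). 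Your treatment of (1) is essentially a re-derivation of the cited GLS result: local freeness and indecomposability of $F_k^+(M)$ are handled correctly (via $F_k^-F_k^+\cong\mathrm{id}$ on modules with surjective in-map and the absence of $E_k$-summands on either side), and the $\tau$-local freeness clause rests on the commutation $\tau_{s_kH}\circ F_k^+\cong F_k^+\circ\tau_H$ coming from $C^+_{s_kH}F_k^+\cong F_k^+C^+_H$ and \Cref{thm: coxeter functors AR translation}. That relation is true and your inductive use of it is coherent, but note two points where your sketch is thinner than a complete argument: the reordering $C^+_{s_kH}\cong F_k^+F^+_{l_n}\cdots F^+_{l_2}$ and $C^+_H\cong F^+_{l_n}\cdots F^+_{l_2}F_k^+$ needs the admissibility bookkeeping (and the harmless compatibility of $T$ with $F_k^+$), and ``iterating in both directions'' cannot literally reuse the same isomorphism for negative powers, since $F_k^+$ is not invertible---you need the dual relation $\tau^-_{s_kH}F_k^+\cong F_k^+\tau^-_H$ obtained from the analogous factorization of $C^-$, handling the degenerate cases where a translate vanishes or equals $E_k$ just as you do in the positive direction. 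With those details supplied, your argument is complete; what the paper's approach buys is brevity (parts (2) and (3) become short consequences of the quoted result), while yours buys self-containedness and makes transparent exactly which properties of the Coxeter/AR identification are used. Your proof of (3) coincides with the paper's.
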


\begin{proof}
    The statement (1) was proven in \cite[Proposition 11.8]{MR3660306}. 
    
    For (2), let $k$ be a sink and assume that $M\neq E_k$. Then $F_k^+(M)$ is locally free by (1), which implies that the image of $M_{k, \mathrm{in}}$ in $M_k$ is a free $H_k$-submodule. Thus $M_{k, \mathrm{in}}$ is surjective; otherwise $M$ is decomposable. The case of $k$ being a source is dual. 
    
    For (3), it suffices to prove for a single reflection $s_i$. The calculation on rank vectors follows directly from the definition of reflection functors and (2). If $\tau M \neq 0$, then $\rank(\tau M) = \rank (C^+(M))$ since the twist $T$ does not change the rank vector and the latter equals $c^{-1}(\rank (M))$.
\end{proof}

We have a large class of modules that are $\tau$-locally free by

\begin{proposition}[{\cite[Proposition 11.4]{MR3660306}}]\label{prop: rigid implies tau loc free}
    Every locally free rigid $H$-module is $\tau$-locally free.
\end{proposition}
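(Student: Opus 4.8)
The plan is to reduce the statement to \Cref{prop: reflection of loc free and rigid} by expressing the Auslander--Reiten translation as a composition of reflection functors, and then running an induction. By \Cref{thm: coxeter functors AR translation}, on $\replf H$ one has functorial isomorphisms $\tau \cong TC^+$ and $\tau^- \cong TC^-$, so it suffices to show that $TC^{\pm}$ carry a locally free rigid module to a locally free rigid module. For the twist $T$ this is immediate: $T(N)_i = N_i$ for all $i$ gives local freeness, and $T^2 = \mathrm{id}_{\rep H}$ makes $T$ a self-equivalence of $\rep H$, hence it preserves $\operatorname{Ext}^1$ and therefore rigidity. So the only real work is with the Coxeter functors $C^{\pm}$.

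Next I would unwind $C^+ = F_n^+\circ\cdots\circ F_1^+$. With the standing convention that $(i,j)\in\Omega$ implies $i<j$, the vertex $1$ is a sink of $\Omega$, the vertex $2$ is a sink of $s_1(\Omega)$, and in general $((1,+),\dots,(n,+))$ is an admissible sequence; dually $C^- = F_1^-\circ\cdots\circ F_n^-$ consists of reflections at sources. Thus each factor in $C^{\pm}$ is a reflection functor at a sink (resp.\ source) of the GLS algebra produced by the previous factors, so \Cref{prop: reflection of loc free and rigid} applies at every step: a locally free rigid module over $s_{k-1}\cdots s_1(H)$ is sent to a locally free rigid module over $s_k\cdots s_1(H)$. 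After $n$ steps one lands back over $s_n\cdots s_1(H) = H$, so if $N\in\replf H$ is rigid then $C^+(N)$, and hence $TC^+(N)\cong\tau(N)$, is again locally free and rigid; symmetrically for $\tau^-\cong TC^-$.

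Finally I would close the loop by induction. The base case is $\tau^0(M) = M$, locally free and rigid by hypothesis. For $k\geq 0$, if $\tau^k(M)$ is locally free and rigid then \Cref{thm: coxeter functors AR translation} applies to it and the previous paragraph gives that $\tau^{k+1}(M)$ is locally free and rigid; here the degenerate case $\tau^{k+1}(M) = 0$ (when $\tau^k(M)$ is projective) is vacuously locally free. Symmetrically, using $\tau^-\cong TC^-$, the module $\tau^{-k}(M)$ is locally free and rigid for all $k\geq 0$. Hence $\tau^k(M)$ is locally free for every $k\in\mathbb Z$, i.e.\ $M$ is $\tau$-locally free.

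I expect the one delicate point to be the reason rigidity has to be dragged through the entire induction rather than just local freeness: reflection functors do \emph{not} preserve local freeness on their own, and this is exactly the content absorbed into \Cref{prop: reflection of loc free and rigid}, so once that single-reflection statement is available the rest is bookkeeping. A minor thing to watch is that $\tau\cong TC^+$ is only asserted for modules in $\replf H$, so at each inductive step one must already know $\tau^k(M)\in\replf H$ before invoking it — which is precisely what the induction hypothesis supplies.
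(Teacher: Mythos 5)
Your argument is correct, and it is essentially the intended one: the paper itself gives no proof but simply cites \cite[Proposition 11.4]{MR3660306}, and your reconstruction — writing $\tau \cong TC^+$ and $\tau^-\cong TC^-$ on $\replf H$ via \Cref{thm: coxeter functors AR translation}, pushing ``locally free and rigid'' through each sink/source reflection by \Cref{prop: reflection of loc free and rigid}, and inducting on the power of $\tau$ (with the zero module handling the projective/injective degenerations) — is exactly the mechanism behind the cited result. You also correctly flag the two points that make the bookkeeping work: rigidity must be carried along since reflections alone do not preserve local freeness, and $\tau\cong TC^+$ may only be invoked once the inductive hypothesis guarantees $\tau^k(M)\in\replf H$.
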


Therefore any preprojective or preinjective module is $\tau$-locally free by \Cref{prop: reflection of loc free and rigid}. In the rest of the paper, we will mostly use the $\tau$-locally free property of locally free rigid modules.

\section{Coxeter transformations on affine root systems}\label{section: coxeter action}

\subsection{Real Schur roots}\label{subsec: real schur roots}

The set of \emph{real roots} is
\[
    \Delta_\mathrm{re}(C) \coloneqq \{ w(\alpha_i) \mid w\in W,\ i\in I\}
\]
where $\{\alpha_1, \dots, \alpha_n\}$ are the \emph{simple roots}. A real root is called \emph{positive} if it is a non-negative linear combination of $\alpha_1, \dots, \alpha_n$. Denote by $\Delta_\mathrm{re}^+(C)$ the set of positive real roots.

The \emph{absolute length} $l(w)$ of $w\in W$ is the minimal $r\in \mathbb N$ such that $w$ can be expressed as $w = s_{\beta_1}s_{\beta_2}\dots s_{\beta_r}$ with every $\beta_i$ being a real root. We define the \emph{absolute order} $\leq $ on $W$ by
\[
    u \leq v \iff l(u) + l(u^{-1}v) = l(v).
\]
The set of \emph{real (positive) Schur roots} is defined as
\begin{equation}\label{eq: real schur roots}
    \Delta_\mathrm{rS}(C, \Omega) \coloneqq \{ \beta \in \Delta_\mathrm{re}^+(C) \mid s_\beta \leq c \}.
\end{equation}

The following theorem is an interpretation of real Schur roots in terms of $H$-modules.

\begin{theorem}[\cite{MR4125687}]\label{thm: real schur roots rigid module}
    Real Schur roots are in bijection with (isomorphism classes of) indecomposable locally free rigid $H_K(C, D, \Omega)$-modules as their rank vectors.
\end{theorem}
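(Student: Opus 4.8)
The plan is to establish the bijection by induction on the absolute length $l(c s_\beta)$ (equivalently, using the recursive structure of the set $\Delta_{\mathrm{rS}}(C,\Omega)$ under reflections at sinks and sources), exploiting the compatibility of simple reflections on the root lattice with the generalized reflection functors $F_i^\pm$. First I would set up the base case: if $i$ is a sink or source, then $\alpha_i \in \Delta_{\mathrm{rS}}(C,\Omega)$ since $s_{\alpha_i} = s_i \leq c = s_1\cdots s_n$, and the pseudo-simple module $E_i$ is indecomposable, locally free (by definition $(E_i)_i = H_i$ is free of rank one), and rigid (one checks $\operatorname{Ext}^1_H(E_i, E_i) = 0$ directly, or cites that $E_i$ is pseudo-simple at a sink/source). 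This handles the ``corners'' of the root system.

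For the inductive step, I would use the key observation that conjugation by $s_i$ on the Coxeter element corresponds on the module side to applying a reflection functor $F_i^\pm$, via \Cref{prop: tau loc free reflection}(3): if $\beta \in \Delta_{\mathrm{rS}}(C,\Omega)$ with $\beta \neq \alpha_i$ and $i$ is a sink of $\Omega$, then $s_i(\beta) \in \Delta_{\mathrm{rS}}(C, s_i\Omega)$, and if $M(\beta)$ is the indecomposable locally free rigid $H$-module of rank vector $\beta$, then $F_i^+(M(\beta))$ is by \Cref{prop: reflection of loc free and rigid} locally free and rigid, is indecomposable and $\tau$-locally free by \Cref{prop: rigid implies tau loc free} and \Cref{prop: tau loc free reflection}(1), and has rank vector $s_i(\beta)$ by \Cref{prop: tau loc free reflection}(3). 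Since $F_i^+$ and $F_i^-$ restrict to mutually inverse equivalences on the relevant subcategories (modules with no $E_i$ summand), this gives a rank-preserving-under-$s_i$ bijection between the indecomposable locally free rigid modules over $H$ and over $s_i(H)$, matching the bijection $\beta \leftrightarrow s_i(\beta)$ on Schur roots. Iterating along an admissible sequence that brings any given real Schur root to a simple root $\alpha_{i_t}$ (such a sequence exists because every positive real root with $s_\beta \leq c$ can be reduced to a simple root by a sequence of such conjugations — this is the combinatorial input on absolute order and Coxeter elements, essentially \cite{MR4125687}) produces the module $M(\beta) = F_{i_1}^{-}\cdots F_{i_{t-1}}^{-}(E_{i_t})$ and shows every indecomposable locally free rigid module arises this way.

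It remains to check injectivity of $\beta \mapsto M(\beta)$ and surjectivity. Surjectivity: given any indecomposable locally free rigid $M$, it is $\tau$-locally free (\Cref{prop: rigid implies tau loc free}), so one can find an admissible sequence of source reflections reducing $M$ to some $E_k$ (at each stage, if $M$ is not already pseudo-simple, the relevant structure map is injective/surjective by \Cref{prop: tau loc free reflection}(2), so the reflection functor does not annihilate it and strictly decreases some length invariant); tracing back, $\rank M$ is obtained from $\alpha_k$ by the corresponding sequence of simple reflections, hence lies in $\Delta_{\mathrm{rS}}(C,\Omega)$. Injectivity: if $M, M'$ are indecomposable locally free rigid with the same rank vector $\beta$, reduce both by the same admissible sequence to pseudo-simple modules at the same vertex (the vertex is determined by $\beta$ and the combinatorics), and use that the reflection functors are equivalences to conclude $M \cong M'$. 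The main obstacle I anticipate is the purely root-theoretic claim that every $\beta$ with $s_\beta \leq c$ can be reduced to a simple root by admissible-sequence conjugations while staying within the Schur locus — this requires a careful argument about how absolute length behaves under $c \mapsto s_i c s_i$ and is where one genuinely needs input from the theory of Coxeter elements and noncrossing partitions (as in \cite{MR4125687}); on the module side, the corresponding subtlety is ensuring the reduction terminates, i.e. that one never gets stuck with a non-pseudo-simple module all of whose relevant structure maps fail to be iso — but \Cref{prop: tau loc free reflection}(2) precisely rules this out.
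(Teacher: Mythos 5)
The paper does not actually prove this statement (it is quoted from \cite{MR4125687}), so the real issue is whether your argument stands on its own, and it does not: the pivotal reduction claim is false. Your base case and induction mechanism (compatibility of $F_i^{\pm}$ with $s_i$ via \Cref{prop: reflection of loc free and rigid} and \Cref{prop: tau loc free reflection}) are fine, and they do establish the correspondence for all roots in the infinite $c$-orbits, i.e.\ for preprojective and preinjective modules --- this is essentially how the paper itself exploits reflections in \Cref{section: cc function}. But it is not true that every real Schur root can be brought to a simple root by an admissible sequence of sink/source reflections, nor that every indecomposable locally free rigid module can be reduced to a pseudo-simple $E_k$ by reflection functors. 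The counterexamples are the regular rigid modules, which occur already in affine type whenever a tube has period at least $3$. Concretely, in \Cref{ex: affine b3} (type $\widetilde{\mathbb B}_3$) the root $\beta^{(1)}_{2,1} = \alpha_2 + \alpha_3 = (0,1,1,0)$ is a real Schur root at level $2$ of a period-$3$ tube, with rigid module $M^{(1)}_{2,1}$. An admissible reflection $s_k$ conjugates the Coxeter element, $c\mapsto s_kcs_k$, and carries the tubes of \Cref{prop: finite c orbit} for $c$ to those for $s_kcs_k$ preserving levels: it is a linear map preserving the mesh relations (\ref{eq: mesh relation}), and it preserves positivity of all roots in the tube since the only positive root it negates is $\alpha_k$, which can only sit at level $1$ (a simple root is never a sum of positive roots, whereas by the mesh relation every level-$2$ root is a sum of two positive level-$1$ roots, e.g.\ $\beta^{(1)}_{2,1}=\alpha_2+\alpha_3$). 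Hence no admissible sequence takes a level-$2$ root to any simple root, and correspondingly --- since, as you note, the rank vector follows the reflections as long as the module survives --- no sequence of reflection functors takes $M^{(1)}_{2,1}$ to a pseudo-simple. So both your surjectivity argument and your construction $M(\beta)=F^-_{i_1}\cdots F^-_{i_{t-1}}(E_{i_t})$ break down on these roots; the obstacle you flag at the end is not a delicate point about absolute length that needs care, but a statement that is outright false.

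This is precisely why the regular part needs different input. The paper constructs the rigid modules in the tubes (\Cref{prop: tau rigid modules on tubes}) not by reduction to pseudo-simples of $H$ but from rigid modules over the finite-type subalgebra $H_{\mathrm{fin}}$ transported along Coxeter functors, and the cited proof of the theorem in \cite{MR4125687} (which is valid for arbitrary symmetrizable $C$, where regular exceptional modules are even more plentiful) relies on different machinery --- in particular the description of real Schur roots via exceptional modules over hereditary algebras in the spirit of \cite{MR3551191} --- rather than an induction on sink/source reflections. To salvage your strategy one must add a separate treatment of the real Schur roots with finite $c$-orbit, along the lines of \Cref{prop: real schur roots affine type} and \Cref{prop: tau rigid modules on tubes}, and even then this only covers the affine case rather than the general statement.
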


\subsection{Orbits of the Coxeter transformation}\label{subsection: coxeter orbit}

A Cartan matrix is said to be of \emph{affine type} if the symmetric form $(-, -)$ is positive semi-definite but not positive definite and of \emph{finite type} if positive definite. Finite type Cartan matrices are classified by Dynkin diagrams. There is a classification of affine Cartan matrices given by affine Dynkin diagrams in \cite{MR1104219} (see also \cite{MR0255627}).

We are interested in the action of the Coxeter element $c$ on $\Delta_\mathrm{re}$ when $C$ is of affine type.

\begin{proposition}[\cite{MR0447344}]\label{prop: infinite c orbits}
    There are exactly $2n$ infinite $c$-orbits in $\Delta_\mathrm{re}$ which are
    \[
        \{c^r \beta_i \mid r\in \mathbb Z\}\quad \text{and}\quad \{c^r \gamma_i \mid r\in \mathbb Z\} \quad \text{for $i\in I$},
    \]
    where
    \[
        \beta_i \coloneqq s_1s_2\cdots s_{i-1}(\alpha_i) \quad \text{and} \quad \gamma_{i} \coloneqq s_ns_{n-1}\cdots s_{i+1}(\alpha_i).
    \]
\end{proposition}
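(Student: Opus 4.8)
\emph{Reduction to the radical of the form.}
Since $C$ is affine, the radical of $(-,-)$ is spanned by the positive null root $\delta=\sum_i a_i\alpha_i$ with all $a_i>0$, and $W$---hence the Coxeter element $c$---fixes $\delta$. Fix a vertex $i_0$ with $a_{i_0}=1$; removing it yields a finite-type Cartan matrix $C'$ with highest root $\theta=\delta-\alpha_{i_0}$, and on $V\coloneqq\mathbb Q^n/\mathbb Q\delta$ the classes $\bar\alpha_j\ (j\neq i_0)$ are the simple roots of $C'$ while $\bar\alpha_{i_0}=-\theta$. Hence the induced transformation $\bar c$ lies in the \emph{finite} group $W(C')$, so it has finite order $h$, which forces $c^h-\operatorname{id}$ to have image in $\mathbb Q\delta$ and to annihilate $\delta$. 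Writing $c^h(x)=x+\partial(x)\,\delta$ thus defines a linear form $\partial$ with $\partial(\delta)=0$ and $\partial\circ c=\partial$, and from $c^{mh}(x)=x+m\,\partial(x)\,\delta$ one gets the dichotomy: the $c$-orbit of $x$ is infinite iff $\partial(x)\neq0$. So the claim becomes the statement that the real roots of nonzero defect split into $2n$ $c$-orbits with the asserted representatives.

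\emph{The $2n$ candidate orbits.}
A short telescoping computation in $W$ gives $c^{-1}(\beta_i)=(s_n\cdots s_i)(\alpha_i)=(s_n\cdots s_{i+1})(-\alpha_i)=-\gamma_i$, so $c(\gamma_i)=-\beta_i$; therefore the $c$-orbit of $\gamma_i$ equals $\{c^r\gamma_i\mid r\in\mathbb Z\}=\{-c^r\beta_i\mid r\in\mathbb Z\}=-\mathcal O_i$ with $\mathcal O_i\coloneqq\{c^r\beta_i\mid r\in\mathbb Z\}$, and it is enough to understand the $n$ orbits $\mathcal O_i$. Since $c=s_1\cdots s_n$ and $c^{-1}=s_n\cdots s_1$ are reduced words of length $n$, the inversion-set formula identifies $\{\gamma_1,\dots,\gamma_n\}=\{\eta\in\Delta_\mathrm{re}^+\mid c\eta\in-\Delta_\mathrm{re}^+\}$ and $\{\beta_1,\dots,\beta_n\}=\{\eta\in\Delta_\mathrm{re}^+\mid c^{-1}\eta\in-\Delta_\mathrm{re}^+\}$; in particular the $\beta_i$ (and the $\gamma_i$) are pairwise distinct, and $c$ maps $\Delta_\mathrm{re}^+$ and $-\Delta_\mathrm{re}^+$ into themselves except that $\gamma_i\mapsto-\beta_i$ and $-\gamma_i\mapsto\beta_i$.

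\emph{Sign of the defect, and conclusion.}
With the ordering convention ($1$ a sink, $n$ a source), $\beta_i$ is the rank vector of the indecomposable projective $P_i$ and $\gamma_i$ that of the indecomposable injective $I_i$; both are locally free and rigid, hence $\tau$-locally free by \Cref{prop: rigid implies tau loc free}. The key point I would establish is that $\partial$ agrees, up to a positive scalar, with the defect form of $H$---equivalently, $\partial$ is a positive multiple of the Euler pairing $\langle-,\delta\rangle$ of $H$ and $\langle\rank P_i,\delta\rangle=a_i\cdot\dim_K H_i>0$---so that all $\partial(\beta_i)$ have one fixed (nonzero) sign and $\partial(\gamma_i)=-\partial(\beta_i)$ the opposite. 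Granting this: each $\mathcal O_i$ and each $-\mathcal O_i$ is infinite; $\partial$ is constant on orbits, positive on every $\mathcal O_i$ and negative on every $-\mathcal O_j$, so no orbit contains both a $\beta$ and a $\gamma$. An induction on $m$ using this plus the last sentence of the previous paragraph shows $c^m\beta_i\in\Delta_\mathrm{re}^+$ for $m\geq0$ and $c^m\beta_i\in-\Delta_\mathrm{re}^+$ for $m<0$; together with the sign of $\partial$ this makes $\mathcal O_1,\dots,\mathcal O_n,-\mathcal O_1,\dots,-\mathcal O_n$ pairwise distinct. For exhaustion, let $\mathcal O$ be any infinite orbit; after replacing it by $-\mathcal O$ if necessary we may assume $\partial|_\mathcal O>0$, and then $\mathcal O$ must contain a root in $-\Delta_\mathrm{re}^+$ (otherwise the $\alpha_{i_0}$-coordinate of $c^{mh}x$, which changes by $m\,\partial(x)a_{i_0}=m\,\partial(x)$, would become negative as $m\to-\infty$). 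So $\mathcal O$ contains some $\eta\in-\Delta_\mathrm{re}^+$ with $c\eta\in\Delta_\mathrm{re}^+$; by the inversion-set description $\eta=-\gamma_i$ and $c\eta=\beta_i\in\mathcal O$, whence $\mathcal O=\mathcal O_i$. This yields exactly the $2n$ infinite orbits in the statement.

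\emph{Main obstacle.}
The step with real content is the sign claim above: identifying $\partial$ up to a positive scalar with the defect form of $H$ (equivalently, that all the $\beta_i$ have defect of a single sign). For hereditary algebras this is the classical Dlab--Ringel defect theory, but $H$ is only $1$-Iwanaga--Gorenstein, so one must either transport the defect/Euler calculus to the GLS setting---using that projective and injective $H$-modules are locally free and that the Coxeter functors realise $c^{\pm1}$ on rank vectors (\Cref{prop: tau loc free reflection})---or argue purely in the root system, writing affine real roots as $\bar\alpha+m\delta$ with $\bar\alpha\in\bar\Delta$ and computing $\partial(\beta_i)$ directly via the finite Coxeter element $\bar c$. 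The remaining steps are bookkeeping with reduced words and the defect dichotomy.
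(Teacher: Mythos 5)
Your reduction and bookkeeping are sound, and for what it is worth the paper offers no argument of its own here: the proposition is quoted from Dlab--Ringel, whose proof runs through exactly the defect calculus you outline. Concretely, the following parts of your proposal are correct and complete: the dichotomy (writing $c^h(x)=x+\partial(x)\delta$ with $\partial$ linear, $c$-invariant, vanishing on $\delta$, so that the $c$-orbit of $x$ is infinite iff $\partial(x)\neq 0$); the identities $c^{-1}(\beta_i)=-\gamma_i$, $c(\gamma_i)=-\beta_i$; the identification of $\{\beta_i\}$ and $\{\gamma_i\}$ with the inversion sets of $c^{-1}$ and $c$; and the exhaustion step, which shows that every infinite orbit must contain a transition from a negative to a positive root and hence contains some $\beta_i$ (or, after negating the orbit, some $\gamma_i$).

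However, the step you defer is not bookkeeping; it is the substance of the statement, and leaving it unproved is a genuine gap. You need that $\partial(\beta_i)\neq 0$ for every $i$ and that all these values have one common sign (so $\partial(\gamma_i)=-\partial(\beta_i)$ has the opposite sign). Without the nonvanishing you do not know that even a single $\mathcal O_i$ is infinite, and without the uniform sign you cannot separate the $2n$ candidate orbits: your induction showing $c^m\beta_i\in\Delta^+_{\mathrm{re}}$ for $m\geq 0$ and $c^m\beta_i\in-\Delta^+_{\mathrm{re}}$ for $m<0$, and the claim that no orbit contains both a $\beta$ and a $\gamma$, both invoke the sign claim. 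That these conclusions genuinely require it is shown by finite type, where the same formal relations hold ($c^{-1}\beta_i=-\gamma_i$, the inversion-set description, etc.) yet all orbits are finite and an orbit $\mathcal O_i$ can coincide with some $-\mathcal O_j$; only affineness, entering through the defect, rules this out. So as written your argument establishes just one of the three assertions (every infinite orbit is among the listed ones), while ``exactly $2n$'' and the pairwise distinctness rest on the unproved lemma. To close the gap one must actually carry out one of the routes you name: either identify $\partial$, up to a positive scalar, with the Euler pairing against $\delta$ (for a tame hereditary algebra of type $(C,D,\Omega)$, or for $H$ restricted to locally free modules, where $\langle\rank P_i,\delta\rangle>0$), checking both the proportionality and the sign of the scalar, or compute $\partial(\beta_i)$ directly via the finite Coxeter element $\bar c$. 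Until that is done, the proof is incomplete at precisely the point where the cited Dlab--Ringel defect theory does its work.
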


The positive roots in these infinite orbits are $c^r \beta_i$ for $r\geq 0$ and $c^r \gamma_i$ for $r\leq 0$. As we shall see next, these positive roots are realized as the rank vectors of preprojective and preinjective $H$-modules. This can be derived from the results in \cite{MR3660306}. For completeness we describe a construction in the proof below.

\begin{proposition}
    Let $P_i$ and $\inj_i$ be respectively the projective cover and injective envelope in $\rep H$ of the simple module $S_i$ at $i\in I$. Then we have for any $r\in \mathbb N$
    \[
        c^r \beta_i = \rank {(C^-)^r(P_i)} \quad \text{and} \quad c^{-r} \gamma_i = \rank {(C^+)^r(\inj_i)}.
    \]
\end{proposition}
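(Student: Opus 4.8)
The plan is to establish both identities by induction on $r$, using the Coxeter functors as the computational engine and the earlier structural results to control local freeness and ranks. First I would prove the base cases $r = 0$: the injective $\mathbf g$-vector of $P_i$ is $\alpha_i$ in the sense that $\mathbf g^{\mathrm{inj}}(P_i)$ recovers via (\ref{eq: from g vector to rank}) the rank vector $\beta_i = s_1 s_2 \cdots s_{i-1}(\alpha_i)$, and dually $\rank I_i = \gamma_i = s_n s_{n-1} \cdots s_{i+1}(\alpha_i)$. These base cases follow from the explicit description of projectives and injectives in \cite{MR3660306}: the projective $P_i$ has a locally free filtration whose rank vector is computed by the fact that $P_i$ is built from $E_j$'s along paths in $G(\Omega)$ ending at $i$, and one checks that this combinatorial sum is exactly $s_1\cdots s_{i-1}(\alpha_i)$ by expanding the simple reflections (each $s_j$ with $j<i$ and $(i,j)\in\Omega$ contributes the appropriate multiple $-c_{ij}$). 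The injective case is the formal dual.

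For the inductive step of the first identity, suppose $c^r\beta_i = \rank (C^-)^r(P_i)$ and that $(C^-)^r(P_i)$ is a nonzero preprojective, hence $\tau$-locally free module (it is locally free and rigid, being in the $\tau^-$-orbit of the projective $P_i$, so \Cref{prop: rigid implies tau loc free} applies, or more directly one invokes that preprojective modules are $\tau$-locally free). Since $C$ is of affine type, the $c$-orbit of $\beta_i$ is infinite by \Cref{prop: infinite c orbits}, so $c^{r+1}\beta_i$ is again a positive real root, which forces $(C^-)^{r+1}(P_i) = \tau^{-}\big((C^-)^r(P_i)\big) \neq 0$ — an indecomposable preprojective module never becomes zero under $\tau^-$ until it would leave the preprojective component, which cannot happen while its rank vector stays a positive root. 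Then \Cref{thm: coxeter functors AR translation} gives $TC^-(N) \cong \tau^-(N)$ and the twist functor $T$ preserves rank vectors, while \Cref{prop: tau loc free reflection}(3) gives $\rank \tau^-(N) = c(\rank N)$; combining, $\rank (C^-)^{r+1}(P_i) = c(c^r\beta_i) = c^{r+1}\beta_i$. The second identity is entirely dual, replacing $P_i$ by $I_i$, $C^-$ by $C^+$, $\tau^-$ by $\tau$, and $c$ by $c^{-1}$, using that $\gamma_i$ has an infinite $c$-orbit so $(C^+)^r(I_i)$ stays nonzero.

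The main obstacle is the base-case bookkeeping: verifying that $\rank P_i = s_1\cdots s_{i-1}(\alpha_i)$ and $\rank I_i = s_n\cdots s_{i+1}(\alpha_i)$ as honest vectors, rather than merely that the two sides agree after applying all subsequent Coxeter functors. One clean way to do this is to not compute the rank of $P_i$ directly but instead to observe that $(C^-)^r(P_i)$ for $r$ ranging over $\mathbb{N}$ exhausts the preprojective component, whose dimension (rank) vectors are known to be exactly $\{c^r\beta_i : r\geq 0, i\in I\}$ by the theory in \cite{MR3660306}; matching up the two indexings then only requires identifying which $\beta_i$ corresponds to $P_i$ at $r=0$, and this is pinned down by the injective $\mathbf g$-vector $\mathbf g^{\mathrm{inj}}(P_i)$ together with \Cref{lemma: g vector from rank vector} and the recursion (\ref{eq: from g vector to rank}). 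A secondary subtlety worth a sentence is confirming that $(C^-)^r(P_i)$ never vanishes and never becomes non-locally-free for $r \in \mathbb{N}$: this is where the affine-type hypothesis is used essentially, since in finite type the orbit would eventually hit $c^r\beta_i$ with a negative root, signalling that the module has become injective-or-zero. I would state this non-vanishing explicitly as the one place affineness enters, and otherwise let \Cref{prop: tau loc free reflection} and \Cref{thm: coxeter functors AR translation} do all the work.
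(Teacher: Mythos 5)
Your inductive step is essentially the paper's own mechanism: identify $TC^-$ with $\tau^-$ via \Cref{thm: coxeter functors AR translation}, note that the twist preserves rank vectors, and apply \Cref{prop: tau loc free reflection}(3). That part is fine, although your non-vanishing argument is phrased circularly (``which cannot happen while its rank vector stays a positive root'' refers to the rank you are in the middle of computing). The clean version: by induction $(C^-)^r(P_i)$ has rank $c^r\beta_i$, and since the $2n$ orbits of \Cref{prop: infinite c orbits} are pairwise distinct, $c^r\beta_i\neq\gamma_j=\rank I_j$ for every $j$; hence $(C^-)^r(P_i)$ is not injective and is not annihilated by $\tau^-$.

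The genuine gap is the base case $\rank P_i=\beta_i$ (and dually $\rank I_i=\gamma_i$), and neither of your two routes closes it. The direct combinatorial route is asserted rather than proved, and the parenthetical justification is wrong as an argument: expanding $s_1s_2\cdots s_{i-1}(\alpha_i)$ picks up contributions from all decreasing chains $i>j_1>j_2>\cdots$, weighted by products of the relevant $-c_{kl}$ along the chain (already in type $A_3$ with $3\to2\to1$, the vertex $1$ is not adjacent to $3$ but occurs in $\beta_3$), so matching this with the weighted path count giving $\rank P_i$ is exactly the nontrivial content and needs its own induction. Your fallback route is circular: ``the rank vectors of the preprojective component are known to be exactly $\{c^r\beta_i\}$'' is the statement being proved, and you offer no independent computation of $\mathbf g^{\mathrm{inj}}(P_i)$ --- \Cref{lemma: g vector from rank vector} and the recursion (\ref{eq: from g vector to rank}) only convert between the $\mathbf g$-vector and the rank vector, i.e.\ between two unknowns. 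The paper sidesteps all of this by producing $P_i$ itself through reflection functors: with $\Omega'=s_{i-1}\cdots s_1(\Omega)$ one has $P_i=F_1^-\cdots F_{i-1}^-(E_i)$ for the pseudo-simple $E_i\in\rep H_K(C,D,\Omega')$ of rank $\alpha_i$, so the same \Cref{prop: tau loc free reflection}(3) that drives the induction also gives $\rank P_i=s_1\cdots s_{i-1}(\alpha_i)=\beta_i$ with no separate combinatorics, and dually for $I_i$. Either adopt that construction or actually prove the chain/path identity; as written the base case is missing.
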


\begin{proof}
    The projective module $P_i$ can be constructed as follows. Let $\Omega' = s_{i-1}\cdots s_2s_1(\Omega)$ and $H' = H_K(C, D, \Omega')$. Then $P_i = F_1^-\cdots F_{i-1}^-(E_i)$ where $E_i\in \rep H'$ is the rank one locally free module at $i$, i.e. $\rank E_i = \alpha_i$. Then it follows from \Cref{prop: tau loc free reflection} that $\beta_i = \rank P_i$. The statement for rest $c^r \beta_i$ then again follows from \Cref{prop: tau loc free reflection}. The proof for $c^{-r}\gamma_i$ is similar.
\end{proof}

Next we consider real roots with finite $c$-orbits. When $n = 2$ (affine type means $c_{12}c_{21}=4$), by basic linear algebra there are no finite $c$-orbits on real roots. Now suppose that $n\geq 3$. By a \emph{tube}, we simply mean a set $T = \mathbb Z_{\geq 1} \times \mathbb Z/d\mathbb Z$ for some $d\in \mathbb Z_{\geq 1}$. It is said to have \emph{period} $d$. An element $(n, m)\in T$ is said to be on level $n$.

\begin{proposition}[\cite{MR0447344}, \cite{MR4099768}]\label{prop: finite c orbit}
    Any finite $c$-orbit in $\Delta_\mathrm{re}$ contains either only positive roots or negative roots. Positive real roots with finite $c$-orbits are $\beta_{(n,m)}^{(i)}$ parametrized by the elements 
    \[
        \{(n, m)\mid d_i \nmid n \}\subseteq T_i = \mathbb Z_{\geq 1} \times \mathbb Z/d_i\mathbb Z
    \]
    in finitely many tubes $T_i$ where $i = 1, \dots, \ell$, satisfying mesh relations
    \begin{equation}\label{eq: mesh relation}
        \beta_{(n, m)}^{(i)} + \beta_{(n, m+1)}^{(i)} = \beta_{(n+1, m)}^{(i)} + \beta_{(n-1, m+1)}^{(i)},
    \end{equation}
    where $\beta_{(0,m)}^{(i)}$ is set to $0$ for any $m$ and $i$.
    The Coxeter element $c$ acts on each tube by
    \begin{equation}\label{eq: c action on tube}
        c(\beta_{(n, m)}^{(i)}) = \beta_{(n, m + 1)}^{(i)}.
    \end{equation}
\end{proposition}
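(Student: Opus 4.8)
The plan is to transport the combinatorial structure of the Auslander--Reiten quiver of $H$-modules onto the root system via rank vectors, and then to read off the tube structure on real roots from the well-understood structure of regular components. First I would recall that for an affine type Cartan matrix $C$ with acyclic orientation $\Omega$, the indecomposable locally free rigid $H$-modules that are neither preprojective nor preinjective (the \emph{regular} rigid locally free modules) assemble into tubes of the AR quiver. Concretely, by \Cref{prop: rigid implies tau loc free} and \Cref{thm: coxeter functors AR translation}, applying $\tau$ to such a module $M$ acts on its rank vector by $c^{-1}$ (\Cref{prop: tau loc free reflection}(3)), so a regular rigid module $M$ lying in a tube of period $d_i$ gives a $c$-orbit of rank vectors $\{c^r(\rank M) \mid r \in \mathbb Z\}$ of size dividing $d_i$; these are positive real roots by \Cref{thm: real schur roots rigid module} (after checking the Schur-root condition, or by directly verifying $(\rank M, \rank M) = 2$ using that $M$ is rigid and locally free with $\operatorname{pd} M \le 1$). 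The mesh relations \eqref{eq: mesh relation} are then exactly the image under $\rank$ of the AR-mesh relations in the tube (each mesh $0 \to \tau N \to E \to N \to 0$ with $E$ the direct sum of the two middle terms yields additivity of rank vectors on almost split sequences of locally free modules).

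The key steps, in order, would be: (i) establish that every positive real root $\beta$ with finite $c$-orbit is the rank vector of an indecomposable regular locally free rigid $H$-module — this follows from \Cref{thm: real schur roots rigid module} once one shows such $\beta$ is a real Schur root, i.e. $s_\beta \le c$; since $\beta$ has finite $c$-orbit it is neither of the form $c^r\beta_i$ ($r \ge 0$) nor $c^{-r}\gamma_i$ ($r \ge 0$) from \Cref{prop: infinite c orbits}, and one argues via the characterization of $\Delta_{\mathrm{rS}}(C,\Omega)$ that exactly the positive ones remain, giving also the "positive or negative" dichotomy by applying $-1$ (equivalently $c^{h/2}$ type arguments, or just noting the orbit of $-\beta$ consists of negatives); (ii) invoke the classification of regular components of an affine (valued/modulated) hereditary-type category: there are finitely many tubes, say indexed $i = 1, \dots, \ell$, the tube $T_i$ has some period, and the quasi-simple modules at the mouth together with the ray/coray structure give the parametrization by $(n,m) \in \mathbb Z_{\ge 1} \times \mathbb Z/d_i\mathbb Z$; (iii) identify which $(n,m)$ correspond to \emph{locally free rigid} (hence real-Schur-root) modules — this is where the condition $d_i \nmid n$ enters, because the modules on levels that are multiples of the period are the ones built entirely from a "homogeneous-like" pattern and fail to be rigid or fail local-freeness, mirroring the situation for affine quivers where regular rigid modules sit on non-sincere levels; (iv) translate \eqref{eq: c action on tube} and \eqref{eq: mesh relation} as the $\rank$-image of $\tau$-periodicity and AR-mesh relations respectively.

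I would lean on the cited sources \cite{MR0447344} and \cite{MR4099768} for the purely root-theoretic half (finite $c$-orbits, mesh relations) and use the $H$-module dictionary only to the extent needed for the reader's intuition; strictly, the statement is about $\Delta_{\mathrm{re}}$ and $c$ alone, so a self-contained proof can proceed by linear algebra on the radical of the affine form. In that approach one writes $\delta$ for the primitive positive imaginary root (spanning the radical of $(-,-)$), notes that $c$ fixes $\delta$ and acts on the finite-dimensional quotient lattice $\mathbb Z^n/\mathbb Z\delta$ as a Coxeter element of the underlying finite type, hence with finite order $h$; a real root $\beta$ has finite $c$-orbit iff its image $\bar\beta$ in $\mathbb Z^n / \mathbb Z\delta$ has finite $c$-orbit (automatic) and additionally the "defect" obstruction vanishes, i.e. $c^h\beta = \beta$ rather than $\beta + k\delta$ with $k \ne 0$ — and the defect $\partial\beta$ controls exactly this. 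The finitely many tubes then correspond to the finitely many $c$-orbits (on the finite quotient) of defect-zero real roots not in the "exceptional" $W$-orbit of a fixed finite-type root, with periods $d_i$ reading off from the branch lengths of the affine diagram, and the mesh relation \eqref{eq: mesh relation} is a direct computation with $c$ acting on a rank-$2$ sublattice.

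The main obstacle I anticipate is step (iii): pinning down precisely which levels $(n,m)$ in each tube yield genuine real Schur roots (equivalently, locally free rigid modules rather than merely $\tau$-locally free ones, or modules that cease to be locally free). For symmetric types this is the classical statement that regular exceptional modules avoid the "imaginary" levels $d_i \mid n$, but in the symmetrizable modulated setting one must be careful that local-freeness can fail on a tube even away from period multiples, and that the symmetrizer $D$ (the entries $d_i$) interacts with the tube periods. I would handle this by a careful bookkeeping of rank vectors along a ray in the tube using \Cref{lemma: g vector from rank vector} and the reflection-functor computation of ranks in \Cref{prop: tau loc free reflection}(3), checking that $(\beta_{(n,m)}^{(i)}, \beta_{(n,m)}^{(i)}) = 2$ exactly when $d_i \nmid n$ and $= 0$ otherwise, which simultaneously establishes the real-root claim and the indexing set.
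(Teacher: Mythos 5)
The paper does not actually prove this proposition: it is imported wholesale from Dlab--Ringel \cite{MR0447344} and Reading--Stella \cite{MR4099768}, and the surrounding text only elaborates on how to read the tubes. So the relevant question is whether your sketch would stand on its own, and as written it has concrete gaps. First, the module-theoretic half cannot deliver the full statement. The dictionary you invoke (\Cref{thm: real schur roots rigid module}) matches rigid locally free $H$-modules with \emph{real Schur} roots, and in a tube of period $d_i$ the rigid objects only occupy quasi-lengths $n\leq d_i-1$; the proposition, however, parametrizes \emph{all} positive real roots with finite $c$-orbit, i.e.\ all levels with $d_i\nmid n$, including $n>d_i$, which correspond to non-rigid regular objects invisible to that bijection. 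Moreover, the AR-tube structure for $H$-modules that you want to push down via $\rank$ is not available here: $H$ is not hereditary, and the paper itself only cites later work for the mouth modules of actual tubes, so "regular components of the AR quiver of $H$" cannot be used as an established input. (You also at one point read the $d_i$ in the statement as symmetrizer entries; they are the tube periods, an unrelated quantity despite the clashing notation.)

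Second, two purely root-theoretic steps are not argued. The sign dichotomy does not follow from "the orbit of $-\beta$ consists of negatives"; what is needed is that a finite orbit never crosses signs, which follows from the fact that the only positive roots sent to negative roots by $c$ (resp.\ $c^{-1}$) are the $\gamma_i$ (resp.\ $\beta_i$), and these lie in the infinite orbits of \Cref{prop: infinite c orbits} -- that argument is absent. And your proposed criterion "$(\beta,\beta)=2$ iff real" is miscalibrated for symmetrizable types (real roots have norms $2c_i$) and, more importantly, positive norm alone does not certify membership in $\Delta_\mathrm{re}$: the lattice vectors produced by the mesh recursion must be shown to be real roots, which in twisted/non-simply-laced affine types genuinely requires the explicit description of $\Delta_\mathrm{re}$ or the eigenspace analysis of $c$. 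Your second, linear-algebraic outline (defect/radical: $c$ fixes $\delta$, has finite order modulo $\delta$, finite orbit iff defect zero) is the right direction and is essentially the route of \cite{MR4099768}, but as sketched it stops before the real content, namely constructing the parametrization by $(n,m)$ with $d_i\nmid n$, verifying realness level by level, and deriving the mesh relation (\ref{eq: mesh relation}); "a direct computation with $c$ acting on a rank-$2$ sublattice" is not yet an argument. Given that the paper simply cites the sources, the honest options are either to do the same or to carry out the Reading--Stella analysis in detail; the hybrid sketch as it stands would not compile into a proof.
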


Let us elaborate on the above proposition. Reading and Stella \cite{MR4099768} provide a precise linear algebraic description of roots in these tubes as follows. The Coxeter element $c$, acting on the complexification $\mathbb{C}^I$ of $\mathbb{R}^I$, has a set of $n-1$ linearly independent eigenvectors. They span a hyperplane $U^c \subset \mathbb {C}^I$. We choose an extended vertex $k \in I$ as in \cite{MR4099768}; see Table 1 in \emph{loc. cit.} In particular, the Cartan sub-matrix with indices $I\setminus \{ k \}$ is of finite type. Denote the corresponding finite root system by $\Phi_\mathrm{fin}$ (which is naturally embedded in $\Phi$, the root system of $C$).

Let $\Upsilon^c \coloneqq \Phi \cap U^c$ and $\Upsilon_\mathrm{fin}^c \coloneqq \Phi_\mathrm{fin} \cap U^c$. The later is shown in \cite{MR4099768} to be a finite root system of rank $n-2$ (in fact as a product of at most three type $A$ root systems). Denote by $\Xi_\mathrm{fin}^c$ the set of positive (i.e. in $\Phi^+_\mathrm{fin}$) simple roots for $\Upsilon_\mathrm{fin}^c$.

\begin{proposition}[\cite{MR4099768}]\label{prop: bottom of tube} 
    Let $\Phi$ be an affine root system and $c$ a Coxeter element.
        \begin{enumerate}
            \item The finite root system $\Upsilon_\mathrm{fin}^c$ is of type $A_{d_1-1} \times \cdots \times A_{d_\ell - 1}$ where $\ell \leq 3$.
            \item The bottom of each tube has exactly $d_i - 1$ simple roots in $\Xi_\mathrm{fin}^c$ corresponding to some root sub-system $A_{d_i - 1}$.
        \end{enumerate}
\end{proposition}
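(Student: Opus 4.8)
The plan is to reduce \Cref{prop: bottom of tube} to the analysis of the fixed space $U^c$ of the Coxeter element $c$ and to invoke the classification of affine Dynkin diagrams directly. First I would recall that $\dim U^c = n-1$, since in affine type the Coxeter element $c$ has $1$ as an eigenvalue with algebraic multiplicity $2$ but geometric multiplicity $1$ (there is a single Jordan block of size $2$ attached to the null root $\delta$), while all other eigenvalues are nontrivial roots of unity. Hence the span $U^c$ of the genuine eigenvectors is a hyperplane, and it is the span of the $c$-eigenvectors for eigenvalues $\neq 1$ together with $\delta$ itself (the honest eigenvector inside the size-$2$ block); in particular $\delta \in U^c$. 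Having fixed an extended vertex $k$ so that $C$ restricted to $I \setminus \{k\}$ is of finite type, the key point is that $\Upsilon_\mathrm{fin}^c = \Phi_\mathrm{fin} \cap U^c$ is the sub-root-system of the finite root system $\Phi_\mathrm{fin}$ cut out by a hyperplane, hence is itself a finite root system — this is a standard fact about intersections of root systems with rational subspaces (see Bourbaki), and its rank is $\dim(U^c \cap \mathbb R\Phi_\mathrm{fin})$.

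Next I would pin down that rank. Since $\mathbb R\Phi_\mathrm{fin}$ has dimension $n-1$ and $U^c$ has dimension $n-1$ inside the ambient $\mathbb R^I$ of dimension $n$, their intersection generically has dimension $n-2$; one must check it is exactly $n-2$ and not $n-1$, i.e. that $\mathbb R\Phi_\mathrm{fin} \not\subseteq U^c$. This follows because $\delta$, being a strictly positive combination of all simple roots including $\alpha_k$, does not lie in $\mathbb R\Phi_\mathrm{fin} = \operatorname{span}\{\alpha_i : i \neq k\}$, yet $\delta \in U^c$; so $U^c \neq \mathbb R\Phi_\mathrm{fin}$ and the intersection is a proper hyperplane in each, of dimension $n-2$. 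Therefore $\Upsilon_\mathrm{fin}^c$ is a finite root system of rank $n-2$. To identify its type, I would argue case by case over the affine Dynkin diagrams $\widetilde A_n, \widetilde B_n, \widetilde C_n, \widetilde D_n, \widetilde E_{6,7,8}, \widetilde F_4, \widetilde G_2$ and the choices of extended vertex $k$ and Coxeter word: one computes $U^c$ explicitly from the eigenvectors (or equivalently from $\ker(c-\mathrm{id})^2$, projected away from $\delta$), intersects with the finite root lattice, and reads off the Cartan type. The uniform statement to extract is that each such intersection decomposes as a product of type-$A$ systems $A_{d_1-1}\times\cdots\times A_{d_\ell-1}$ with $\ell\le 3$; the bound $\ell\le 3$ reflects that deleting an extended vertex and then imposing the linear condition cutting out $U^c$ breaks the remaining Dynkin diagram into at most three connected pieces, each necessarily of type $A$ because otherwise the eigenvalue structure of $c$ would be incompatible (a non-type-$A$ component would force extra eigenvalues). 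For part (2), the identification of the $d_i-1$ simple roots of $\Xi_\mathrm{fin}^c$ sitting at the bottom of the $i$-th tube comes from matching the defining equation $\eqref{eq: c action on tube}$ — namely that $c$ acts on $\beta^{(i)}_{(n,m)}$ by cyclically shifting $m$, so on level $1$ the orbit has size $d_i$ and the $d_i-1$ roots with $d_i \nmid n$... wait, on level $1$ all of $(1,0),\dots,(1,d_i-1)$ have $d_i\nmid n$ except none, so more precisely the simple roots of the $A_{d_i-1}$ factor are the consecutive differences, i.e. the level-$1$ roots themselves form one $c$-orbit of size $d_i$ summing to $\delta$, and $d_i-1$ of them are a base of the $A_{d_i-1}$ root subsystem in $\Xi_\mathrm{fin}^c$.

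I would organize the proof as: (i) the general lemma that $\Phi_\mathrm{fin}\cap U^c$ is a finite root system (cite Bourbaki or prove via $U^c$ being defined over $\mathbb Q$); (ii) the rank computation using $\delta\in U^c\setminus\mathbb R\Phi_\mathrm{fin}$; (iii) a finite case check (perhaps relegated to a table, referencing \cite{MR4099768} for the original computation) establishing the product-of-$A$'s structure and the bound $\ell\le 3$; (iv) reconciling this with the tube combinatorics of \Cref{prop: finite c orbit}, so that the $\ell$ tubes match the $\ell$ factors and the periods $d_i$ match the ranks $d_i-1$ plus one. The main obstacle I anticipate is step (iii): the uniform identification of the type as a product of type-$A$ systems and the sharp bound $\ell\le 3$ are not purely formal — they genuinely use the specific shapes of affine Dynkin diagrams, and one either does a (slightly tedious) case analysis or imports it wholesale from \cite{MR4099768}. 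A secondary subtlety is being careful that $U^c$ is the span of eigenvectors \emph{including} $\delta$ (the eigenvector in the Jordan block), not merely the semisimple part away from eigenvalue $1$; getting this wrong would misidentify $U^c$ by one dimension. Given that \cite{MR4099768} is cited in the statement, the cleanest writeup probably proves (i) and (ii) in a few lines and then says the type computation "is carried out in \cite{MR4099768}" while recording the resulting table, rather than reproving it.
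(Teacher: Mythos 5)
The paper does not prove this proposition at all: it is imported verbatim from Reading--Stella \cite{MR4099768}, and your closing instinct --- set up the framework and cite that paper for the type computation --- is exactly the level of detail the authors themselves supply. So as a matter of approach you are consistent with the paper; the comparison is really between your sketch and the cited source.

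If, however, your sketch is read as an actual proof attempt, the steps you do argue have gaps. First, from $\dim\bigl(U^c \cap \mathbb R\Phi_\mathrm{fin}\bigr) = n-2$ you conclude that $\Upsilon_\mathrm{fin}^c$ has rank $n-2$; intersecting a finite root system with a subspace does give a root system, but its rank is only bounded above by that dimension --- you still need to show the roots lying in $U^c$ actually span the intersection, which is part of the content of \cite{MR4099768}. Second, the justification offered for the type-$A$ product structure and the bound $\ell \le 3$ (``deleting the extended vertex and imposing the linear condition breaks the diagram into at most three pieces, each type $A$ because otherwise the eigenvalue structure would be incompatible'') is a heuristic, not an argument; this is precisely the nontrivial case analysis of Reading--Stella and cannot be waved through. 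Third, your discussion of part (2) is garbled, and the parenthetical claim that the level-$1$ roots of a tube sum to $\delta$ is false in the symmetrizable setting: in the paper's type $\widetilde{\mathbb B}_3$ example (\Cref{ex: affine b3}) the three level-$1$ roots are $(0,1,0,0)$, $(0,0,1,0)$ and $(2,1,1,2)$, which sum to $2\delta$, not $\delta$. The correct statement of (2) is just that exactly $d_i-1$ of the $d_i$ level-$1$ roots lie in $\Phi^+_\mathrm{fin}$ and form a base of an $A_{d_i-1}$ factor of $\Upsilon^c_\mathrm{fin}$, the remaining one being $\beta^{(i)}_{1,0} = c(\alpha^{(i)}_{d_i-1})$. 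None of this is fatal given that you (like the paper) ultimately lean on \cite{MR4099768}, but the write-up should either cite that paper for these three points or supply genuine arguments for them.
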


The roots $\beta_{(n, m)}^{(i)}$ on the tubes $T_i$ in \Cref{prop: finite c orbit} can then be described as follows. Let $\alpha_1^{(i)},\dots, \alpha_{d_i-1}^{(i)}$ denote the simple roots in $A_{d_i - 1}$ for $i = 1, \dots, \ell$ so that they form the Dynkin diagram 
\[
    1 \frac{}{\quad} 2 \frac{}{\quad} \dots \frac{}{\quad} (d_{i}-1).
\]
In fact we can set
\[
    \beta^{(i)}_{(1, m)} = \alpha_{m}^{(i)},\quad m = 1, \dots, d_i-1.
\]
Notice that $\beta^{(i)}_{(1, 0)} = c(\alpha_{d_i - 1}^{(i)}) = c^{-1}(\alpha_{1}^{(i)})$ by (\ref{eq: c action on tube}) in \Cref{prop: finite c orbit}. Then one can use the mesh relation to express any root on the tube of higher level as a linear sum of the roots at the \emph{bottom} of $T_i$ (i.e. of level $1$).

\begin{proposition}\label{prop: real schur roots affine type}
    The real Schur roots $\Delta_{\mathrm{rS}}(C, \Omega)$ is a disjoint union of $\{c^r\beta_i\mid i\in I, r\in \mathbb N\}$, $\{c^{-r}\gamma_i\mid i\in I, r\in \mathbb N\}$, and
    \[
        \bigsqcup_{i = 1}^\ell \{\beta_{(n, m)}^{(i)} \mid (n, m)\in T_i, n\leq d_i - 1\}.
    \]
\end{proposition}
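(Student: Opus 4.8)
The plan is to identify $\Delta_{\mathrm{rS}}(C, \Omega)$ by partitioning $\Delta_{\mathrm{re}}^+(C)$ according to the $c$-orbit structure established in \Cref{prop: infinite c orbits} and \Cref{prop: finite c orbit}, and then in each orbit determining precisely which roots $\beta$ satisfy $s_\beta \leq c$. Recall $\Delta_{\mathrm{rS}}(C,\Omega) = \{\beta \in \Delta_{\mathrm{re}}^+(C) \mid s_\beta \leq c\}$ by (\ref{eq: real schur roots}), so the whole problem is about the absolute order condition $l(s_\beta) + l(s_\beta c) = l(c) = n$, i.e. $l(s_\beta c) = n-1$. The key preliminary observation, which I would record first, is the compatibility of the whole picture with reflection functors: by \Cref{thm: real schur roots rigid module}, $\Delta_{\mathrm{rS}}(C,\Omega)$ is exactly the set of rank vectors of indecomposable locally free rigid $H$-modules, and by \Cref{prop: reflection of loc free and rigid} together with \Cref{prop: tau loc free reflection}(3), applying a reflection functor $F_i^{\pm}$ at a sink/source $i$ of $\Omega$ transports this set by the simple reflection $s_i$ (with the pseudo-simple $E_i$ being the lone exception that gets killed). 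This gives me a recursive handle and also matches the combinatorial side, since $c \mapsto s_i c s_i$ under such a mutation.

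First I would handle the infinite orbits. By the proposition following \Cref{prop: infinite c orbits}, the positive roots in the infinite orbits are exactly $c^r\beta_i = \rank (C^-)^r(P_i)$ for $r\geq 0$ and $c^{-r}\gamma_i = \rank(C^+)^r(I_i)$ for $r\geq 0$, and $P_i$, $I_i$ and all their images under $C^{\mp}$ are preprojective resp. preinjective, hence indecomposable, locally free and rigid (by \Cref{prop: rigid implies tau loc free} and \Cref{prop: reflection of loc free and rigid}, or directly). So every such root is a real Schur root. Conversely, each infinite $c$-orbit meets $\Delta_{\mathrm{re}}^+$ in precisely the roots listed (the orbit leaves the positive cone for $r$ on the other side), so there is nothing more to check there: the preprojective/preinjective contribution to $\Delta_{\mathrm{rS}}$ is exactly $\{c^r\beta_i\} \sqcup \{c^{-r}\gamma_i\}$.

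The substance is the finite (tube) orbits. Here I would argue that a root $\beta_{(n,m)}^{(i)}$ on a tube of period $d_i$ is a real Schur root if and only if its level satisfies $n \leq d_i - 1$. For the "if" direction I would exhibit an explicit indecomposable locally free rigid module with that rank vector: using \Cref{prop: bottom of tube}, the level-one roots $\beta^{(i)}_{1,m} = \alpha^{(i)}_m$ are simple roots of a type $A_{d_i-1}$ sub-root-system, and the roots of level $\leq d_i-1$ are exactly the positive roots of that finite type $A_{d_i-1}$ system (read off from the mesh relations (\ref{eq: mesh relation})); these are rank vectors of indecomposable rigid modules supported on the corresponding "finite-type" part, which are locally free, so by \Cref{thm: real schur roots rigid module} they lie in $\Delta_{\mathrm{rS}}$. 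For the "only if" direction I would show a root of level $\geq d_i$ cannot be a real Schur root: one can use that $c$ acts on the tube with period $d_i$ (equation (\ref{eq: c action on tube})), so $c^{d_i}$ fixes $\beta^{(i)}_{(n,m)}$; a root fixed by a bounded power of $c$ whose level is too high would force $s_\beta$ to lie "too deep" relative to $c$ in the absolute order — concretely, $c^{d_i-1}$ restricted to the $A_{d_i-1}$ system is the (inverse) Coxeter element of type $A_{d_i-1}$, and only the $d_i-1$ positive roots of that finite system (the level $\leq d_i-1$ ones) satisfy $s_\beta \leq c$. I would make this precise by reducing, via reflection functors as above, to a standard fact about real Schur roots / rigid modules in finite type $A$, where a module of "level $\geq d_i$" would correspond to a non-rigid or imaginary-flavoured object (its rank vector is not a root of the $A_{d_i-1}$ sub-system, it has a self-extension through the tube), hence not the rank vector of an indecomposable rigid locally free module, hence not in $\Delta_{\mathrm{rS}}$ by \Cref{thm: real schur roots rigid module}.

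The main obstacle I anticipate is the "only if" direction in the tubes, i.e. cleanly ruling out level $\geq d_i$: one must show that although $\beta^{(i)}_{(n,m)}$ for large $n$ is still a genuine positive real root (it has finite $c$-orbit), it fails $s_\beta \leq c$. The cleanest route is probably the module-theoretic one — show that such a rank vector is not achieved by any indecomposable rigid locally free $H$-module (because any module with that rank vector living "in the tube" has a self-extension, as regular modules of quasi-length $\geq d_i$ in a tube of period $d_i$ are non-rigid) and invoke \Cref{thm: real schur roots rigid module} — but one has to be careful that the relevant modules really are locally free and really do exhaust the candidates with that rank vector. An alternative purely combinatorial route through the absolute order and the eigenvector/hyperplane description of \Cref{prop: bottom of tube} also works but requires a lemma relating $l(s_\beta c)$ to the "depth" of $\beta$ in the tube, which is where the real bookkeeping lies. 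Once both directions are in place, the disjointness of the three families is immediate since they live in disjoint $c$-orbits (infinite vs. finite, and different tubes $T_i$), completing the identification of $\Delta_{\mathrm{rS}}(C,\Omega)$.
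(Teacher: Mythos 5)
Your route stays inside the $H$-module world: you invoke \Cref{thm: real schur roots rigid module} and try to decide, orbit by orbit, which rank vectors are realized by indecomposable rigid locally free modules. The ``only if'' direction for the tubes is where this breaks down, and you have correctly identified it as the main obstacle but not filled it. To rule out a root $\beta^{(i)}_{(n,m)}$ with $n\geq d_i$ you would need to know that \emph{every} indecomposable locally free $H$-module with that rank vector is non-rigid, which presupposes a developed structure theory of the regular components of $\rep_{\mathrm{l.f.}}H$ in affine type (tubes, quasi-length, self-extensions, and the fact that all candidates with that rank vector actually lie in the tube). None of that is available in this paper: only the mouth modules of the tubes are constructed, and only in the separate work cited in the remark after \Cref{prop: tau rigid modules on tubes}. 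Your proposed repair --- ``reduce via reflection functors to a standard fact in finite type $A$'' --- cannot work as stated: sink/source reflections only conjugate the Coxeter element and keep you in the same affine type, and the criterion ``its rank vector is not a root of the $A_{d_i-1}$ subsystem'' is not valid, since plenty of genuine real Schur roots of level $\leq d_i-1$ also fail it (e.g.\ $\beta^{(1)}_{2,2}=(2,1,2,2)$ in \Cref{ex: affine b3}). Relatedly, your ``if'' direction contains a factual slip: the roots of level $\leq d_i-1$ are \emph{not} exactly the positive roots of the type $A_{d_i-1}$ system (there are $d_i(d_i-1)$ of the former and $d_i(d_i-1)/2$ of the latter); only certain roots on each level lie in $\Phi_{\mathrm{fin}}$, and the rest must be reached by transporting with $\tau$/Coxeter functors as in \Cref{prop: tau rigid modules on tubes}. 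That part is repairable, but as written it is wrong.

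The paper avoids the whole difficulty by changing categories: by Hubery--Krause, the real Schur roots of (\ref{eq: real schur roots}) are the dimension vectors of rigid indecomposable modules over \emph{any} hereditary algebra whose generalized Cartan lattice is $(C,D,\Omega)$, so one may take a Dlab--Ringel tame species of that type, where the classification of indecomposables (preprojective, preinjective, regular in tubes) and the rigidity of exactly the regulars of quasi-length below the tube period are classical; alternatively one can combine the Reading--Stella results purely linear-algebraically. That transfer to a hereditary algebra (or to the combinatorics of the absolute order) is precisely the ingredient your argument is missing; without it, the exclusion of levels $\geq d_i$ remains an unproven claim about the module theory of $H$ itself.
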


\begin{proof}
    This description is built on a sequence of results. Linear algebraically, this can be obtained combining \cite{MR4099768} and \cite{MR4073889}. Alternatively we can use the categorical description of all real Schur roots (as defined in (\ref{eq: real schur roots})) by Hubery and Krause \cite{MR3551191}. In fact, they show that real Schur roots are exactly the dimension vectors of rigid indecomposable modules of any hereditary algebra (over an arbitrary field $K$) whose associated \emph{generalized Cartan lattice} (see \cite[Section 3]{MR3551191}) is given by the data $(C, D, \Omega)$. Notice that we are in the affine case. Then one can take the explicit construction of a hereditary algebra of type $(C, D, \Omega)$ in \cite{MR0447344}. Then the rigid indecomposable modules have dimension vectors exactly as described in the statement according to \cite{MR0447344}.
\end{proof}

Notice that there is at least one positive root in $\Upsilon^c_\mathrm{fin}$ on each level of the tube $T_i$ up to $d_i - 1$. In fact, these roots are
\begin{equation}\label{eq: finite real schur}
    \beta_{(b-a+1, a)}^{(i)}  =  \sum_{m\in [a, b]} \alpha_{m}^{(i)} \in \Phi_\mathrm{fin}^+
\end{equation}
for a sub-interval $[a, b]\subset [1, d_i-1]$. For example $\beta^{(i)}_{(d_i-1, 1)} = \sum_{m=1}^{d_i-1} \alpha_{m}^{(i)}$, the longest positive root in the root system $A_{d_i-1}$, and it is the only one in $\Upsilon^c_\mathrm{fin}$ on the level $d_i - 1$.

We next realize the real Schur roots with finite $c$-orbits by rank vectors of $\tau$-locally free $H$-modules. The proposition below is a direct consequence of \Cref{thm: real schur roots rigid module} and \Cref{prop: reflection of loc free and rigid,prop: finite c orbit,prop: real schur roots affine type}. We provide an alternative proof that emphasizes the finite type GLS algebra $H_\mathrm{fin}$ (see below).

\begin{proposition}\label{prop: tau rigid modules on tubes}
    There are indecomposable rigid locally free $H$-modules $M_{(n,m)}^{(i)}$ with $n\leq d_i - 1$ such that
    \[
        \beta_{(n,m)}^{(i)} = \rank M_{(n,m)}^{(i)} \quad \text{and} \quad \tau M_{(n,m)}^{(i)} = M_{(n,m-1)}^{(i)}.
    \]
\end{proposition}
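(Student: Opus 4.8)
The plan is to build the modules $M_{(n,m)}^{(i)}$ by induction on the level $n$, using the reflection/Coxeter functors to realize the $\tau$-action and the mesh relations on ranks. For the base level $n=1$, recall that $\beta_{(1,m)}^{(i)} = \alpha_m^{(i)}$ is a real Schur root (it lies in $\Delta_{\mathrm{rS}}(C,\Omega)$ by \Cref{prop: real schur roots affine type}), so by \Cref{thm: real schur roots rigid module} there is a unique indecomposable locally free rigid $H$-module $M_{(1,m)}^{(i)}$ with this rank vector. By \Cref{prop: rigid implies tau loc free} it is $\tau$-locally free, and by \Cref{prop: tau loc free reflection}(3) its rank vector under $\tau$ is $c^{-1}\beta_{(1,m)}^{(i)} = \beta_{(1,m+1)}^{(i)}$ (using the $c$-action on the tube, \eqref{eq: c action on tube}, and that $\tau M_{(1,m)}^{(i)} \neq 0$ since its rank vector is a positive root). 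One must check $\tau M_{(1,m)}^{(i)}$ is again locally free and indecomposable, which it is, being a translate of a $\tau$-locally free indecomposable; and since it is a translate of a rigid module it is rigid. Hence we may \emph{define} $M_{(1,m+1)}^{(i)} \coloneqq \tau M_{(1,m)}^{(i)}$, and this is consistent as $m$ ranges over $\mathbb Z/d_i\mathbb Z$ provided $\tau^{d_i}$ fixes the isoclass; here one invokes that $c^{d_i}$ fixes $\beta_{(1,m)}^{(i)}$ and that within a tube the $\tau$-orbit is genuinely periodic of period $d_i$ (the $d_i$-th translate has the same rank vector, and an indecomposable rigid locally free module is determined by its rank vector via \Cref{thm: real schur roots rigid module}).

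For the inductive step, suppose the modules $M_{(n',m)}^{(i)}$ have been constructed for all $n' \le n$ (with $n+1 \le d_i - 1$), realizing the rank vectors $\beta_{(n',m)}^{(i)}$ and satisfying $\tau M_{(n',m)}^{(i)} = M_{(n',m+1)}^{(i)}$. I would obtain $M_{(n+1,m)}^{(i)}$ from an extension or a filtration coming from the mesh relation \eqref{eq: mesh relation}, rewritten as $\beta_{(n+1,m)}^{(i)} = \beta_{(n,m)}^{(i)} + \beta_{(n,m+1)}^{(i)} - \beta_{(n-1,m+1)}^{(i)}$. The cleanest route is to show that the relevant rank vector $\beta_{(n+1,m)}^{(i)}$ is itself a real Schur root — this is exactly the content of \Cref{prop: real schur roots affine type}, which lists precisely the $\beta_{(n,m)}^{(i)}$ with $n \le d_i - 1$ among $\Delta_{\mathrm{rS}}(C,\Omega)$ — so again \Cref{thm: real schur roots rigid module} hands us a unique indecomposable locally free rigid $H$-module $M_{(n+1,m)}^{(i)}$ with that rank vector, and \Cref{prop: tau loc free reflection}(3) together with \eqref{eq: c action on tube} forces $\rank \tau M_{(n+1,m)}^{(i)} = \beta_{(n+1,m+1)}^{(i)}$, whence $\tau M_{(n+1,m)}^{(i)} \cong M_{(n+1,m+1)}^{(i)}$ by uniqueness. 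So in fact the whole statement reduces to: (a) every $\beta_{(n,m)}^{(i)}$ with $n \le d_i - 1$ is a real Schur root, and (b) the $\tau$-translate of the corresponding module stays nonzero and locally free so that its rank vector is computed by $c^{-1}$.

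Point (a) is supplied by \Cref{prop: real schur roots affine type}. The one genuine subtlety — and what I expect to be the main obstacle — is ensuring $\tau M_{(n,m)}^{(i)} \neq 0$ for all these modules, i.e. that none of them is preinjective; equivalently that the rank vector stays positive under all powers of $c^{-1}$ within the range $n \le d_i - 1$ and, more importantly, that $c$-periodicity of the rank vector really does force $\tau$-periodicity of the module. The first half follows because the $c$-orbit of $\beta_{(n,m)}^{(i)}$ consists entirely of positive roots (\Cref{prop: finite c orbit}), so $c^{-1}(\rank M_{(n,m)}^{(i)})$ is a positive root, hence $\tau M_{(n,m)}^{(i)}$ cannot be zero (a zero module would force $M_{(n,m)}^{(i)}$ to be preinjective, living in an infinite, not periodic, $c$-orbit). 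The second half is where one must be slightly careful: the identity $\tau M_{(n,m)}^{(i)} \cong M_{(n,m+1)}^{(i)}$ as defined above is immediate from uniqueness of indecomposable rigid locally free modules with a given rank vector (\Cref{thm: real schur roots rigid module}), but one should remark that this uniqueness is precisely what makes the construction independent of all choices and compatible with the mesh relations \eqref{eq: mesh relation}. I would therefore phrase the proof as: invoke \Cref{prop: real schur roots affine type} for existence, \Cref{thm: real schur roots rigid module} for uniqueness, \Cref{prop: rigid implies tau loc free} and \Cref{prop: reflection of loc free and rigid} for the $\tau$-locally-free and rigidity preservation, and \Cref{prop: tau loc free reflection}(3) with \eqref{eq: c action on tube} for the rank computation identifying $\tau M_{(n,m)}^{(i)}$ with $M_{(n,m+1)}^{(i)}$.
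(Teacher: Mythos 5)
Your proposal is correct in substance but follows a genuinely different route from the paper. The paper never invokes the general bijection of \Cref{thm: real schur roots rigid module} for the tube roots: instead it uses the finite-type result \cite[Theorem 1.3]{MR3660306} for the subalgebra $H_\mathrm{fin}$ to produce, for each tube root lying in $\Phi_\mathrm{fin}^+$ (there is at least one on every level $\leq d_i-1$, by \Cref{prop: bottom of tube} and (\ref{eq: finite real schur})), an indecomposable rigid locally free $H_\mathrm{fin}$-module, observes that it stays locally free and rigid as an $H$-module, and then transports it around the tube with $\tau$ (Coxeter functors), using \Cref{prop: rigid implies tau loc free} and \Cref{prop: tau loc free reflection} to fill in the remaining positions. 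You instead take existence and uniqueness at \emph{every} position $(n,m)$ with $n\leq d_i-1$ directly from \Cref{prop: real schur roots affine type} together with \Cref{thm: real schur roots rigid module}, and use $\tau$ only to verify the periodicity relation via the rank computation $\rank\tau M = c^{-1}\rank M$ plus uniqueness. Both arguments are sound and non-circular (\Cref{prop: real schur roots affine type} is proved independently of the present statement); yours is shorter but leans on the heavier general theorem of \cite{MR4125687}, while the paper's is more constructive and only needs the finite-type classification plus transport, at the price of the (implicit) check that rigidity over $H_\mathrm{fin}$ persists over $H$. Your attention to $\tau M\neq 0$ and to rigidity/indecomposability being preserved under $\tau$ (via \Cref{prop: reflection of loc free and rigid} and \Cref{prop: tau loc free reflection}(1)) is a point the paper glosses over.

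Two small blemishes, neither a real gap. First, $\tau M=0$ forces $M$ to be \emph{projective}, not preinjective; your argument still works verbatim after this correction, since projective indecomposables have rank vectors $\beta_\ell$ lying in infinite $c$-orbits, so uniqueness of the module with a given rank vector rules them out. Second, by (\ref{eq: c action on tube}) one has $c^{-1}\beta^{(i)}_{(n,m)}=\beta^{(i)}_{(n,m-1)}$, not $\beta^{(i)}_{(n,m+1)}$ as you write; this index-direction issue is already present in the paper (compare the statement with \Cref{ex: affine b3}) and amounts to a relabeling of the tube coordinate, but you should not cite (\ref{eq: c action on tube}) for an identity it literally contradicts.
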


\begin{proof}
    In fact, we can start by constructing those modules whose rank vectors are in $\Phi_\mathrm{fin}^+$. Denote by $H_\mathrm{fin}$ the GLS algebra associated to the Cartan submatrix with indices $I \setminus \{k\}$ where $k$ is the chosen extended vertex. Then by \cite[Theorem 1.3]{MR3660306}, each $\beta_{(b-a+1, a)}^{(i)}$ as in (\ref{eq: finite real schur}), which is in $\Phi_\mathrm{fin}^+$, is the rank vector of a unique indecomposable rigid locally free $H_\mathrm{fin}$-module. These $H_\mathrm{fin}$-modules are also locally free and rigid as $H$-modules, hence $\tau$-locally free by \Cref{prop: rigid implies tau loc free}. Then by \Cref{prop: tau loc free reflection}, we can use AR translation $\tau$ or the Coxeter functors to transport these modules to fill in the tubes up to level $d_i - 1$.
\end{proof}

\begin{remark}
    The modules $M_{(1,m)}^{(i)}$ are explicitly constructed and proven to be the \emph{mouth} modules of actual tubes (as components of the Auslander--Reiten quiver) of $H$-modules in \cite{lin2024affine}.
\end{remark}

\begin{example}\label{ex: affine b3}
    We present an example in type $\widetilde {\mathbb B}_3$. Let the data $(C, D, \Omega)$ be
    \[
        C = \begin{bmatrix}
            2 & -2 & 0 & 0 \\
            -1 & 2 & -1 & 0 \\
            0 & -1 & 2 & -1 \\
            0 & 0 & -2 & 2
        \end{bmatrix},
        \quad
        D = \begin{bmatrix}
            1 & 0 & 0 & 0 \\
            0 & 2 & 0 & 0 \\
            0 & 0 & 2 & 0 \\
            0 & 0 & 0 & 1
        \end{bmatrix},
        \quad \text{and} \quad
        \Omega = \{(3, 4), (2, 3), (1, 2)\}.
    \]
    The algebra $H = H_K(C, D, \Omega)$ can be expressed as $KQ/I$, where
    \[
        Q = \begin{tikzcd}
            4 \ar[r] & 3 \ar[loop, in=60, out=120, distance=6mm, "\varepsilon_3"] \ar[r, "\alpha"] & 2 \ar[loop, in=60, out=120, distance=6mm, "\varepsilon_2"] \ar[r] & 1
        \end{tikzcd} \quad \text{and} \quad
        I = ( \varepsilon_2^2,\ \varepsilon_3^2,\ \varepsilon_2 \alpha - \alpha \varepsilon_3 ).
    \]
    In the following we represent $H$-modules as quiver representations. Each copy of $i\in \{1, 2, 3, 4\}$ in a diagram below stands for a basis vector in $M_i$ of a representation $M$ of $Q$. Any arrow with a horizontal shift means that the corresponding arrow in $Q$ transports the basis vectors. Vertical arrows express the action of $\varepsilon_i$.

    Choose the vertex $4$ to be the extended vertex. In this case the finite root system $\Upsilon_\mathrm{fin}^c$ is of type $A_{2}$ with two (positive) simple roots $\alpha_2 = (0, 1, 0, 0)$ and $\alpha_3 = (0, 0, 1, 0)$. They correspond to pseudo-simples $E_2$ and $E_3$ in \Cref{prop: tau rigid modules on tubes}. The finite $\tau$-orbit of $E_2$ and $E_3$ is
    \[
        E_3 \quad \xlongleftarrow{\tau} \quad 
        \begin{tikzcd}
            4 \ar[r] & 3 \ar[r] \ar[d] & 2 \ar[r] \ar[d] & 1 \\
            4 \ar[r] & 3 \ar[r] & 2 \ar[r] & 1 
        \end{tikzcd}
        \quad \xlongleftarrow{\tau} \quad E_2 \quad \xlongleftarrow{\tau} \quad E_3.
    \]
    The level $2$ of this tube (of roots) is another finite $c$-orbit
    \[
        \beta_{(2, 1)}^{(1)} = (0, 1, 1, 0) \quad \xlongrightarrow{c} \quad \beta_{(2, 2)}^{(1)} = (2, 1, 2, 2)
        \quad \xlongrightarrow{c} \quad \beta_{(2, 0)}^{(1)} = (2, 2, 1, 2) \quad \xlongrightarrow{c} \quad \beta_{(2, 1)}^{(1)}.
    \]
    The corresponding locally free and rigid indecomposable modules are
    
    \[  
        M_{(2, 1)}^{(1)} = \begin{tikzcd}
            3 \ar[r] \ar[d] & 2 \ar[d] \\
            3 \ar[r] & 2 
        \end{tikzcd},
        \quad
        M_{(2, 2)}^{(1)} =
        \begin{tikzcd}
            4 \ar[r] & 3 \ar[r] \ar[d] & 2 \ar[r] \ar[d] & 1 \\
            4 \ar[r] \ar[rd] & 3 \ar[r] & 2 \ar[r] & 1 \\
             & 3 \ar[d] & & \\
             & 3 & & 
        \end{tikzcd}, 
        \quad
        M_{(2, 0)}^{(1)} = 
        \begin{tikzcd}
            4 \ar[r] & 3 \ar[r] \ar[d] & 2 \ar[r] \ar[d] & 1 \\
            4 \ar[r] & 3 \ar[r] & 2 \ar[r] & 1 \\
             & & 2 \ar[d] & \\
             & & 2 \ar[uur] & 
        \end{tikzcd}.
    \]
    They form a finite $\tau$-orbit as
    \[
        M_{(2, 1)}^{(1)} \quad \xlongleftarrow{\tau} \quad M_{(2, 2)}^{(1)} \quad \xlongleftarrow{\tau} \quad M_{(2, 0)}^{(1)} \quad \xlongleftarrow{\tau} \quad M_{(2, 1)}^{(1)}.
    \]
\end{example}

\section{Cluster algebras}\label{section: cluster and coxeter}

\subsection{Cluster algebras}
We review the Fomin--Zelevinsky cluster algebra $\mathcal A(\widetilde B)$ \cite{MR1887642} associated to $\widetilde B\in \mathrm{Mat}_{m\times n}(\mathbb Z)$ (with $m\geq n$) extended from a skew-symmetrizable matrix $B\in \mathrm{Mat}_{n\times n}(\mathbb Z)$, that is, the first $n$ rows of $\widetilde B$ form $B$. For the original systematic treatment, we refer to \cite{MR2295199}.

Let $\mathbb T_n$ denote the infinite $n$-regular tree. The $n$-edges incident to a vertex are distinctively labeled by indices $I = \{1, \dots, n\}$. By assigning the \emph{initial seed} $\Sigma = (\widetilde B, (x_1, \dots, x_n, x_{n+1}, \dots, x_m))$ to a root $t_0\in \mathbb T_n$ and applying \emph{seed mutations} $\mu_k$ for $k = 1, \dots, n$, one obtain an assignment of a seed to each $t\in \mathbb T_n$,
\[
    t\mapsto \Sigma_t = (\widetilde B_t, (x_{1;t}, \dots, x_{m;t}))\in \mathrm{Mat}_{m\times n}(\mathbb Z) \times \mathbb Q(x_1, \dots, x_m)^m.
\]
This assignment is uniquely determined to satisfy $\mu_k(\Sigma_t) = \Sigma_{t'}$ for any edge $t \frac{k}{\quad\quad} t'$ in $\mathbb T_n$ which means if writing $\widetilde B_t = (b^t_{ij})$ for $t\in \mathbb T_n$, then
\[
    b^{t'}_{ij} = \begin{dcases}
        - b^t_{ij} \quad & \text{if $i = k$ or $j = k$} \\
        b^t_{ij} + \mathrm{sgn}(b^t_{ik})[b^t_{ik}b^t_{kj}]_+ \quad & \text{otherwise,}
    \end{dcases}
\]
and
\[
    x_{i; t'} = \begin{dcases}
        x_{k; t}^{-1}\left(\prod_{j=1}^m x_{j;t}^{[b^t_{jk}]_+} + \prod_{j=1}^m x_{j;t}^{[-b^t_{jk}]_+} \right) \quad & \text{if $i = k$}\\
        x_{i; t} \quad & \text{otherwise.}
    \end{dcases}
\]
Each $x_{i; t}$ for $i = 1, \dots, n$ is called a (mutable) \emph{cluster variable}. A \emph{cluster monomial} is a monomial of $\{x_{1;t}, \dots, x_{n; t}\}$ for some $t\in \mathbb T_n$. Notice that according to the mutation rule, the variables $x_{n+1; t},\dots, x_{m; t}$ stay as the initial ones. They are thus called \emph{frozen variables}, which can be viewed as playing the role of coefficients.

\begin{definition}\label{def: cluster algebra}
    The \emph{cluster algebra} $\mathcal A(\widetilde B)$ is defined to be the subalgebra of $\mathbb Q(x_1, \dots, x_m)$ over $\mathbb Z[x_{n+1}^\pm, \dots, x_{m}^\pm]$ generated by all $x_{i; t}$.
\end{definition}

When there is need to emphasize the dependence on the root $t_0$, we shall denote $x_{i; t} = x_{i; t}^{\widetilde B; t_0}$.

\begin{remark}
    When $\widetilde B = B$, we call $\mathcal A(B)$ \emph{coefficient-free}. When $\widetilde B = \begin{bsmallmatrix}
        B \\
        I
    \end{bsmallmatrix}$ where $I$ is the identity matrix of dimension $d$, we say that $\mathcal A(\widetilde B)$ has \emph{principal coefficients} at $t_0$. In this case, we denote $\mathcal A_\bullet(B) = \mathcal A(\widetilde B)$.
\end{remark}

Already remarkably, each $x_{i; t}$ actually belongs to $\mathbb Z[x_1^\pm, \dots, x_n^\pm, x_{n+1}, \dots, x_m]$ (the Laurent phenomenon \cite{MR1887642}). This allows us to define the $\mathbf d$-vector of a cluster variable.

\begin{definition}
    The \emph{$\mathbf d$-vector} $\mathbf d(x) = (d_1, \dots, d_n)\in \mathbb Z^n$ of a cluster variable $x$ (with respect to the initial seed) is the minimal vector in $\mathbb Z^n$ such that
    $x\prod_{i = 1}^n x_i^{d_i}$ is a polynomial in $x_1, \dots, x_m$.
\end{definition}

Let $B = B(C, \Omega)$ be the skew-symmetrizable matrix as in \Cref{def: matrix b}. In particular it is acyclic; but we do not restrict to the affine case. The following theorem accumulates on a series of well-known results in the additive categorification of acyclic cluster algebras.

\begin{theorem}\label{thm: d vectors real schur roots}
    Sending a cluster variable $x$ to its $\mathbf d$-vector $\mathbf d(x)$ induces a bijection from the set of non-initial cluster variables in $\mathcal A_\bullet(B)$ to set $\Delta_\mathrm{rS}(C, \Omega)$ of real Schur roots.
\end{theorem}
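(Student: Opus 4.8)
The plan is to realize the assignment $x \mapsto \mathbf d(x)$ as a composite of two bijections supplied by the additive categorification of acyclic cluster algebras, and then to observe that this composite is computed by the denominator vector. One preliminary remark: since every $F$-polynomial has constant term $1$ and nonnegative coefficients, specializing the principal coefficients $y_i \mapsto 1$ causes no cancellation among powers of the $x_i$, so the $\mathbf d$-vector of a cluster variable is the same in $\mathcal A_\bullet(B)$ as in the coefficient-free algebra $\mathcal A(B)$, and we may work with whichever is convenient.

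Let $\Lambda = \Lambda(C, \Omega)$ be a hereditary algebra (a valued path algebra, that is, a species) whose Euler form is encoded by $C$ and whose valued quiver is that of $\Omega$, and let $\mathcal C_\Lambda$ be its cluster category. The Caldero--Chapoton cluster character $T \mapsto X_T$ identifies the indecomposable rigid objects of $\mathcal C_\Lambda$ with the cluster variables of $\mathcal A(B)$, carrying the $n$ shifted projectives $P_i[1]$ to the initial variables $x_i$; hence it restricts to a bijection $M \mapsto X_M$ from indecomposable rigid $\Lambda$-modules onto the non-initial cluster variables. In the simply-laced case this is the theorem of Caldero--Keller and of Buan--Marsh--Reineke--Reiten--Todorov; the general skew-symmetrizable case follows either by unfolding $(C, \Omega)$ to an acyclic quiver with an admissible group action $G$, so that rigid $\Lambda$-modules correspond to rigid $G$-equivariant representations, or directly from the species-level cluster character of Rupel together with Demonet's categorification. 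On the root side, the dimension vectors of indecomposable rigid $\Lambda$-modules are exactly the real Schur roots: each such module has a real-root dimension vector and is the unique indecomposable of that dimension, and the identification of the resulting set of roots with $\{\beta \in \Delta_\mathrm{re}^+(C) \mid s_\beta \leq c\}$ of \eqref{eq: real schur roots} is the theorem of Hubery--Krause \cite{MR3551191}; this is the same root set that \Cref{thm: real schur roots rigid module} records in the $H$-module language as rank vectors of locally free rigid $H$-modules.

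It remains to check that the composite of these two bijections is precisely $x \mapsto \mathbf d(x)$, that is, that $\mathbf d(X_M) = \underline{\dim}\, M$ for every indecomposable rigid $\Lambda$-module $M$. This is the \emph{denominator theorem} for acyclic cluster algebras --- Caldero--Keller in the simply-laced case --- which once more descends to the skew-symmetrizable setting by folding, both the dimension and the denominator vector being obtained from the corresponding vectors of the unfolding. Combining this with the previous paragraph, $x \mapsto \mathbf d(x)$ is a composite of bijections, hence a bijection from the non-initial cluster variables of $\mathcal A_\bullet(B)$ onto $\Delta_\mathrm{rS}(C, \Omega)$, which is the assertion.

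The one genuinely delicate point, I expect, is running this categorification in full skew-symmetrizable generality: cluster categories, cluster characters, and the denominator theorem are cleanest for ordinary quivers, so one must either carry out the unfolding argument carefully --- checking that folding is compatible with the cluster character and with the dimension and denominator vectors --- or pin down the species-level statements in the literature and reconcile their conventions with those of \cite{MR3660306} and of \Cref{thm: real schur roots rigid module}. The remaining ingredients are formal: the injectivity of the cluster character on rigid objects, and the classical identification of the real roots of a hereditary algebra with the $\beta$ satisfying $s_\beta \leq c$. If one prefers to bypass cluster categories, an alternative is a direct induction over admissible sequences of sink and source mutations, using that such a mutation changes the initial seed by $\mu_k$ and acts on $\mathbf d$-vectors by the simple reflection $s_k$ --- the newly mutated variable acquiring $\mathbf d$-vector $\alpha_k$ --- and then matching with the corresponding recursive description of $\Delta_\mathrm{rS}(C, \Omega)$; but the categorical route is shorter and is the ``well-known'' one alluded to in the statement.
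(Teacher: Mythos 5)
Your proposal is correct and follows essentially the same route as the paper: both identify real Schur roots with dimension vectors of exceptional modules over a hereditary algebra of type $(C,D,\Omega)$ via Hubery--Krause \cite{MR3551191}, and then invoke the Caldero--Chapoton/Caldero--Keller formula in the symmetric case and Rupel's (quantum) cluster character \cite{MR3378824} in the symmetrizable case to match non-initial cluster variables with exceptional modules, reading off that the $\mathbf d$-vector equals the dimension vector. The only cosmetic difference is that the paper extracts the denominator identification directly from Rupel's formula rather than appealing to a separate denominator theorem or a folding argument, which sidesteps the ``delicate point'' you flag.
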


\begin{proof}
    Recall that we have defined real Schur roots using the data $(C, D, \Omega)$ in \Cref{subsec: real schur roots}. Hubery and Krause \cite{MR3551191} showed that they are in bijection with exceptional modules (as their dimension vectors) of an hereditary algebra (over any ground field) of type $(C, D, \Omega)$. 
    
    For $C$ symmetric of Dynkin type, one can take $H$ to be $H_\mathbb C(C, I, \Omega)$ which is the path algebra of a Dynkin quiver. Then the result follows from Caldero--Chapoton's formula \cite{MR2250855} expressing cluster variables from exceptional $H$-modules where the $\mathbf d$-vector is evidently the dimension vector. This formula has then been extended by Caldero and Keller \cite{MR2316979} to any symmetric $C$.

    For $C$ symmetrizable, the statement follows from Rupel's quantum cluster character formula \cite{MR3378824}. In this case, one can construct $H$ a hereditary algebra of type $(C, D, \Omega)$ over finite fields. And again through the formula, the dimension vector of an exceptional module is exactly the $\mathbf d$-vector of the corresponding cluster variable.
\end{proof}

Recall from \Cref{thm: real schur roots rigid module} that real Schur roots are in bijection with indecomposable locally free rigid $H_\mathbb C(C, D, \Omega)$-modules (as their rank vectors). Together with the bijection in \Cref{thm: d vectors real schur roots}, this induces a bijection between indecomposable locally free rigid modules and non-initial cluster variables. For $\beta\in \Delta_\mathrm{rS}(C, \Omega)$, we denote the corresponding $H$-module and cluster variable by
\begin{equation}
    \begin{tikzcd}
        M(\beta) \ar[r] & X_\beta \ar[l]
    \end{tikzcd}.
\end{equation}

Our focus in this paper is the case when $C$ is of affine type. We will show that $X_\beta$ can be expressed by a Caldero--Chapoton type formula from $M(\beta)$, providing a new cluster character formula in this skew-symmetrizable case different from those of Rupel \cite{MR3378824} and of Demonet \cite{MR2844757}. An advantage of using $H$-modules is that certain classical constructions of $\mathbb C$-varieties such as quiver Grassmannians carry over with modifications.

\subsection{\texorpdfstring{$F$-polynomials and $\mathbf g$-vectors}{F-polynomials and g-vectors}}\label{subsec: g vector f poly}

With principal coefficients, cluster variables in $\mathcal A_\bullet (B)$ are determined by their $F$-polynomials and $\mathbf g$-vectors as shown in \cite{MR2295199}, which we mostly follow in this section. They are defined as follows.

We denote $x_{n+1}, \dots, x_{2n}$ by $y_1, \dots, y_n$. The Laurent phenomenon allows the following definition.

\begin{definition}\label{def: F-poly}
    The $F$-polynomial $F_{i; t}^{B; t_0}$ of $x_{i; t}$ (with respect to the root $t_0$) is defined as
    \[
         F_{i; t} = F_{i; t}^{B; t_0}(y_1, \dots, y_n) \coloneqq x_{i; t}(1, \dots, 1, y_1, \dots, y_n)\in \mathbb Z[y_1, \dots, y_n].
    \]
\end{definition}

For example, the $F$-polynomial of any initial cluster variable $x_{i; t_0} = x_i$ is the constant $1$.

There is a $\mathbb Z^n$-grading on $\mathcal A_\bullet(B)$. Let $e_1, \dots, e_n$ be the standard basis of $\mathbb Z^n$. We define 
\[
    \deg x_i \coloneqq e_i \quad \text{and} \quad \deg y_i \coloneqq \sum_{j=1}^n -b_{ji}e_j.
\]
This gives a $\mathbb Z^n$-grading on $\mathbb Z[x_1^\pm,\dots, x_n^\pm, y_1, \dots, y_n]$.

\begin{proposition}[\cite{MR2295199}]
    Every cluster variable in $\mathcal A_\bullet(B)$ is homogeneous.  We thus can define the $\mathbf g$-vector of a cluster variable $x_{i; t}$ to be
    \[
        \mathbf g_{i; t} = \mathbf g_{i; t}^{B; t_0} \coloneqq \deg x_{i; t} \in \mathbb Z^n.
    \]
\end{proposition}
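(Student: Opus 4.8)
The plan is to verify the two claims in the proposition separately: first that every cluster variable in $\mathcal A_\bullet(B)$ is homogeneous with respect to the declared $\mathbb Z^n$-grading, and second that this legitimizes the definition of $\mathbf g_{i;t}$. The second part is purely notational once the first is in hand, so the entire content is the homogeneity statement. I would prove this by induction on the distance in $\mathbb T_n$ from the initial vertex $t_0$. The base case is immediate: the initial cluster variables $x_1,\dots,x_n$ are homogeneous of degrees $e_1,\dots,e_n$ by definition, and each frozen variable $x_{n+i}=y_i$ is homogeneous of degree $\sum_j -b_{ji}e_j$ by the declared grading.

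For the inductive step, suppose $t \frac{k}{\quad\quad} t'$ is an edge in $\mathbb T_n$ and that all cluster variables $x_{1;t},\dots,x_{n;t}$ (together with the frozen variables, which are unchanged) are homogeneous. Only $x_{k;t}$ changes under $\mu_k$, via the exchange relation
\[
    x_{k;t'} = x_{k;t}^{-1}\left( \prod_{j=1}^m x_{j;t}^{[b^t_{jk}]_+} + \prod_{j=1}^m x_{j;t}^{[-b^t_{jk}]_+} \right).
\]
For $x_{k;t'}$ to be homogeneous, the two monomials $\prod_j x_{j;t}^{[b^t_{jk}]_+}$ and $\prod_j x_{j;t}^{[-b^t_{jk}]_+}$ in the bracket must have the same degree. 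The key step, therefore, is to track the degrees of the cluster and frozen variables along the tree and verify this balancing. Concretely, if I record the degree of each $x_{j;t}$ as a vector $\mathbf g_{j;t}\in\mathbb Z^n$ (for $j\le n$, the inductive hypothesis; for $j>n$, the fixed value $\sum_i -b^{t_0}_{i,j-n}e_i$), I must show
\[
    \sum_{j=1}^m [b^t_{jk}]_+\, \mathbf g_{j;t} = \sum_{j=1}^m [-b^t_{jk}]_+\, \mathbf g_{j;t},
\]
and then $x_{k;t'}$ is homogeneous of degree equal to this common value minus $\mathbf g_{k;t}$. The cleanest route is to carry along, as part of the induction, the explicit recursion for $\mathbf g$-vectors from \cite{MR2295199}: namely that the matrix whose columns are $(\mathbf g_{1;t},\dots,\mathbf g_{n;t})$ together with the exchange matrix $\widetilde B_t$ satisfy compatible mutation rules, with the $C$-matrix (the matrix of degrees of the $y$-monomials appearing) mediating the identity above. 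One then checks by direct substitution into the $B$-matrix and $y$-degree mutation formulas that the balancing identity is preserved; this is the standard "separation of additions" bookkeeping and is routine but must be done carefully with the $[\cdot]_+$ truncations and the sign conventions of \Cref{def: matrix b}.

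The main obstacle is purely combinatorial rather than conceptual: one must choose the right auxiliary data to induct on so that the degree-balancing identity propagates cleanly under mutation, and handle the two cases $b^t_{jk}\ge 0$ versus $b^t_{jk}<0$ in the exponent vectors without sign errors. There is no deep difficulty here — this is exactly the homogeneity argument of Fomin--Zelevinsky \cite{MR2295199} — so in the write-up I would either reproduce the short induction with the $\mathbf g$-vector recursion made explicit, or simply cite \cite[Proposition 6.1 and Proposition 6.6]{MR2295199} after noting that our grading on $y_i$ matches theirs under the identification of principal coefficients. Given that the paper already attributes the statement to \cite{MR2295199}, the expected proof is the latter: record that the $\mathbb Z^n$-grading here is the one induced by principal coefficients, invoke the cited homogeneity result, and define $\mathbf g_{i;t}=\deg x_{i;t}$ accordingly.
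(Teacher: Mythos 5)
Your proposal is correct and matches the paper's treatment: the paper gives no proof of this proposition at all, simply attributing it to Fomin--Zelevinsky \cite{MR2295199}, and your fallback of citing their homogeneity results (Propositions 6.1 and 6.6 there) after checking that the grading $\deg x_i = e_i$, $\deg y_i = -\sum_j b_{ji}e_j$ is exactly the principal-coefficient grading is precisely what is intended. Your sketched induction along $\mathbb T_n$ with the degree-balancing identity mediated by the $C$-matrix is the standard Fomin--Zelevinsky argument and is sound.
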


\begin{theorem}[\cite{MR2295199}]
    Every cluster variable in $\mathcal A_\bullet(B)$ can be expressed as
    \[
        x_{k; t} =  F_{k; t}(\hat y_1, \dots, \hat y_n) \prod_{i=1}^nx_i^{g_i} = F_{k; t}(\hat y_1, \dots, \hat y_n) x^{\mathbf g_{k; t}}
    \]
    where $\mathbf g_{k; t} = (g_i)_{i\in I}$ and $\hat y_i = y_i\prod_{j\in I} x_j^{b_{ji}}$.
\end{theorem}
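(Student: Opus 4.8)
The plan is to derive this separation-of-additions formula of Fomin--Zelevinsky from the preceding Proposition (homogeneity of cluster variables for the $\mathbb{Z}^n$-grading) by a direct bookkeeping of degrees; the formula itself is then essentially formal. Working inside $\mathcal{A}_\bullet(B)$, the Laurent phenomenon lets us write
\[
  x_{k;t} = \sum_{(\mathbf{a},\mathbf{b})\in \mathbb{Z}^n\times \mathbb{N}^n} c_{\mathbf{a},\mathbf{b}}\, x^{\mathbf{a}} y^{\mathbf{b}}, \qquad c_{\mathbf{a},\mathbf{b}}\in \mathbb{Z},
\]
a Laurent polynomial in $x_1,\dots,x_n$ that is an honest polynomial in $y_1,\dots,y_n$.

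First I would unpack what homogeneity of $x_{k;t}$ of degree $\mathbf{g}_{k;t}=(g_j)_j$ means monomial by monomial. Since $\deg y_i = \sum_j(-b_{ji})e_j$, the $j$-th coordinate of $\deg(x^{\mathbf{a}}y^{\mathbf{b}})$ is $a_j - \sum_i b_{ji} b_i$; homogeneity forces $a_j = g_j + \sum_i b_{ji} b_i$ for every monomial that actually occurs. In particular the $x$-exponent $\mathbf{a}$ is a function of $\mathbf{b}$ alone, so the sum is really indexed by $\mathbf{b}$, say with coefficient $c_{\mathbf{b}}$, and each term can be rewritten as
\[
  x^{\mathbf{a}} y^{\mathbf{b}} = x^{\mathbf{g}_{k;t}}\prod_{i=1}^n\Big(y_i\prod_{j=1}^n x_j^{b_{ji}}\Big)^{b_i} = x^{\mathbf{g}_{k;t}}\,\hat{y}^{\mathbf{b}},
\]
with $\hat{y}_i = y_i\prod_j x_j^{b_{ji}}$ exactly as in the statement. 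Factoring out $x^{\mathbf{g}_{k;t}}$ then gives $x_{k;t} = x^{\mathbf{g}_{k;t}}\sum_{\mathbf{b}} c_{\mathbf{b}}\hat{y}^{\mathbf{b}}$.

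It then remains to identify $\sum_{\mathbf{b}} c_{\mathbf{b}}\hat{y}^{\mathbf{b}}$ with $F_{k;t}(\hat{y}_1,\dots,\hat{y}_n)$. For this I would specialize $x_1=\cdots=x_n=1$: this sends $\hat{y}_i\mapsto y_i$ and turns the identity above into $x_{k;t}(1,\dots,1,y_1,\dots,y_n) = \sum_{\mathbf{b}} c_{\mathbf{b}} y^{\mathbf{b}}$, whose left-hand side is $F_{k;t}(y_1,\dots,y_n)$ by \Cref{def: F-poly}. Hence $\sum_{\mathbf{b}} c_{\mathbf{b}} y^{\mathbf{b}} = F_{k;t}(\mathbf{y})$ as polynomials, and substituting $\hat{y}$ for $\mathbf{y}$ yields $x_{k;t} = F_{k;t}(\hat{y}_1,\dots,\hat{y}_n)\,x^{\mathbf{g}_{k;t}}$, which is the claim.

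The one genuinely substantial ingredient is the homogeneity Proposition; granting it, the rest is purely the grading bookkeeping above, so I expect no further obstacle. For completeness one might recall that homogeneity itself is proved by induction on the distance from $t_0$ in $\mathbb{T}_n$: in each exchange relation one checks that the two Laurent monomials forming the numerator of $x_{k;t}$ are homogeneous of equal degree, which reduces to the compatibility of the $\mathbf{g}$-vector and $\mathbf{c}$-vector recursions with the grading --- the technical core of \cite{MR2295199}. That inductive verification is the only place where real work is required; if it were not already available it would be the main obstacle here.
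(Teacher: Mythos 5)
Your derivation is correct: granting the homogeneity proposition stated just before (which you rightly identify as the only substantial input, proved in \cite{MR2295199} by induction along $\mathbb T_n$), the grading bookkeeping forces each monomial to be $x^{\mathbf g_{k;t}}\hat y^{\mathbf b}$, and the specialization $x_1=\cdots=x_n=1$ identifies the resulting polynomial in $\hat y$ with $F_{k;t}$ as defined in the paper. The paper itself gives no proof — it cites this result from Fomin--Zelevinsky — and your argument is essentially the standard one from that source, so there is nothing to add.
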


The significance of $F$-polynomial and $\mathbf g$-vector is in the following \emph{separation formula}, which gives an expression of any cluster variable in $\mathcal A(\widetilde B)$, where $\widetilde B$ is extended from $B$. Let $\mathbb Q_\mathrm{sf}(y_1, \dots, y_n)$ denote the \emph{universal semifield} of subtraction free rational expressions on $n$ variables over $\mathbb Q$. The addition and multiplication are the usual ones of rational functions $\mathbb Q(y_1, \dots, y_n)$. Any $F_{i;t}$ is in this semifield because any cluster variable is obtained by iterative mutations only involving additions and divisions. For any $F\in \mathbb Q_\mathrm{sf}(y_1, \dots, y_n)$ and any semifield $\mathbb P$, denote by
\[
    F\mid_\mathbb P (p_i \leftarrow y_i) \in \mathbb P
\]
the evaluation of $F$ at $y_i = p_i\in \mathbb P$.

\begin{theorem}[{\cite[Corollary 6.3]{MR2295199}, \cite[(2.14)]{MR2629987}}]\label{thm: separation formula}
    Let $\widetilde B\in \mathrm{Mat}_{m\times n}(\mathbb Z)$ be any matrix extended from $B$. Then we have
    \[
        x_{\ell; t}^{\widetilde B; t_0} = \frac{F_{\ell; t}(\hat y_1, \dots, \hat y_n)}{F_{\ell; t} \mid_{\mathrm{Trop}(x_{n+1}, \dots, x_{m})} \left(\prod_{j=n+1}^{m} x_{j}^{\tilde b_{ji}} \leftarrow y_i \right)} \prod_{i=1}^n x_i^{g_i}
    \]
    where $\hat y_i = \prod_{j=1}^m x_j^{\tilde b_{ji}}$ and $\mathrm{Trop}(x_{n+1}, \dots, x_m)$ denotes the \emph{tropical semifield} generated by indeterminates $x_{n+1}$, \dots, $x_m$ (see \cite[Definition 2.1, 2.2]{MR2295199}).
\end{theorem}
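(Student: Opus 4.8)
The statement to be proved is the separation formula (Theorem~\ref{thm: separation formula}) expressing an arbitrary-coefficient cluster variable $x_{\ell;t}^{\widetilde B;t_0}$ in terms of its $F$-polynomial, its $\mathbf g$-vector, and a tropical evaluation. The plan is to deduce it from the principal-coefficient case, which is exactly the preceding theorem of \cite{MR2295199}, by a base-change argument on the coefficient semifield. First I would observe that $\mathcal A(\widetilde B)$ and $\mathcal A_\bullet(B)$ share the same underlying combinatorics: the exchange matrices $B_t$ (the principal $n\times n$ part) evolve identically in the two cases, so the $F$-polynomials $F_{\ell;t}$ and $\mathbf g$-vectors $\mathbf g_{\ell;t}$ computed with respect to $t_0$ are literally the same objects. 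The only difference is how the coefficient variables $x_{n+1},\dots,x_m$ (resp.\ $x_{n+1},\dots,x_{2n}$ in the principal case) propagate under mutation.

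The key step is to invoke the coefficient-specialization principle of \cite[Section~12]{MR2295199}: any cluster algebra with coefficients over a semifield $\mathbb P$ is obtained from the one with principal coefficients by the semifield homomorphism sending the principal generators $y_1,\dots,y_n$ to the corresponding elements of $\mathbb P$, and this specialization is compatible with mutation. Concretely, I would take $\mathbb P=\mathrm{Trop}(x_{n+1},\dots,x_m)$ together with the evaluation $y_i\mapsto \prod_{j=n+1}^m x_j^{\tilde b_{ji}}$, so that the coefficient tuple $\hat y_i$ in $\mathcal A(\widetilde B)$ factors as the ambient monomial part $\prod_{j=1}^n x_j^{b_{ji}}$ times the tropical coefficient. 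Applying the principal-coefficient expansion $x_{k;t}=F_{k;t}(\hat y_1,\dots,\hat y_n)\,x^{\mathbf g_{k;t}}$ and then substituting the factorization of $\hat y_i$, the numerator becomes $F_{\ell;t}$ evaluated at $\hat y_i=\prod_{j=1}^m x_j^{\tilde b_{ji}}$, while the coefficient normalization—which must make the Laurent expansion have the correct constant term, i.e.\ be $1$ at $x_1=\dots=x_n=1$—is precisely the denominator $F_{\ell;t}\mid_{\mathrm{Trop}(x_{n+1},\dots,x_m)}\bigl(\prod_{j=n+1}^m x_j^{\tilde b_{ji}}\leftarrow y_i\bigr)$. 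This is the content of \cite[(2.14)]{MR2629987}; I would simply track that the $\mathbf g$-vector $x^{\mathbf g_{\ell;t}}=\prod_{i=1}^n x_i^{g_i}$ survives unchanged because the coefficient part of the seed does not interact with the $\mathbb Z^n$-grading used to define $\mathbf g$-vectors.

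The main obstacle—really the only nontrivial point—is verifying that the tropical denominator is exactly the right normalization, i.e.\ that dividing $F_{\ell;t}(\hat y_1,\dots,\hat y_n)$ by this tropical evaluation and multiplying by $x^{\mathbf g_{\ell;t}}$ reproduces the honest Laurent expansion of $x_{\ell;t}^{\widetilde B;t_0}$ rather than merely something proportional to it. Here I would appeal directly to the separation-of-additions theorem \cite[Theorem~3.7 and Corollary~6.3]{MR2295199}, which already packages exactly this statement in the universal-semifield language; the reformulation in terms of $\mathrm{Trop}(x_{n+1},\dots,x_m)$ is the one spelled out in \cite[(2.14)]{MR2629987}. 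Since both cited results are available, the proof reduces to citing them and checking that our normalization conventions for $\widetilde B$, $\hat y_i$, and $\mathbf g_{\ell;t}$ match theirs, which is routine. I would therefore present the proof as essentially a pointer to \cite{MR2295199} and \cite{MR2629987}, with a one-line remark that the $F$-polynomial and $\mathbf g$-vector are intrinsic to $B$ and $t_0$ and hence independent of the choice of extension $\widetilde B$.
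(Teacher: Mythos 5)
Your proposal is correct and matches the paper's treatment: the paper states this separation formula as an imported result, citing \cite[Corollary 6.3]{MR2295199} and \cite[(2.14)]{MR2629987} without reproving it, which is exactly the reduction-to-citation you propose. Your sketch of the underlying mechanism (principal-coefficient expansion plus coefficient specialization over the tropical semifield) is consistent with the cited proofs, so nothing further is needed.
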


\begin{remark}\label{rmk: extended g vector trop eva}
    The tropical evaluation of $F_{\ell;t}$ results in a Laurent monomial of $x_{n+1}, \dots, x_{m}$ with non-positive exponents. Thus the formula in \Cref{thm: separation formula} can be written as
    \[
        x_{\ell; t}^{\widetilde B; t_0} = F_{\ell; t}(\hat y_1, \dots, \hat y_n) \prod_{i = 1}^m {x_i^{g_i}}, 
    \]
    where $\widetilde {\mathbf g}_{\ell; t} = (g_i)_{i=1}^m$ is called the \emph{extended} $\mathbf g$-vector (which depends on $\widetilde B$).
\end{remark}

\begin{definition}\label{def: h vector fz}
    Define $\mathbf h_{\ell; t}^{B;t_0} = (h_i)_{i\in I}$ by
    \[
        x_1^{h_1}\cdots x_n^{h_n} = F_{\ell;t}^{B; t_0} \mid_{\mathrm{Trop}(x_1, \dots, x_n)}(x_i^{-1}\prod_{j = 1}^n x_j^{[-b_{ji}]_+} \leftarrow y_i ).
    \]
\end{definition}

Last we review a recurrence of $\mathbf g$-vectors and $F$-polynomials under the change of initial seeds from \cite{MR2295199}.

\begin{proposition}[{\cite{MR2295199}, \cite[Proposition 2.4]{MR2629987}}]\label{prop: recurrence g vector f poly dwz}
    Suppose $t_0 \frac{k}{\quad\quad} t_1$ in $\mathbb T_n$ and $B_1 = \mu_k(B)$. Let $h_k$ (resp. $h_k'$) be the $k$-th component of $\mathbf h_{\ell;t}^{B; t_0}$ (resp. $\mathbf h_{\ell;t}^{B_1; t_1}$). Then the $\mathbf g$-vectors $\mathbf g_{\ell;t}^{B; t_0} = (g_i)_{i=1}^n$ and $\mathbf g_{\ell; t}^{B_1; t_1} = (g_i')_{i=1}^n$ are related by
    \begin{equation}\label{eq: recurrence g vector dwz}
        g_k' = - g_k \quad \text{and} \quad g_i' = g_i + [b_{ik}]_+g_k - b_{ik}h_k \quad \text{for $i\neq k$}.
    \end{equation}
    We also have
    \begin{equation}\label{eq: hk and hk'}
        g_k = h_k - h_k' \quad \text{and}
    \end{equation}
    \begin{equation}\label{eq: recurrence f poly dwz}
            (1 + y_k)^{h_k} F_{\ell;t}^{B; t_0}(y_1,\dots, y_n) = (1 + y_k')^{h_k'}F_{\ell;t}^{B; t_1}(y_1', \dots, y_n').
    \end{equation}
    where $y_k' = y_k^{-1}$ and $y_i' = y_iy_k^{[b_{ki}]_+}(y_k+1)^{-b_{ki}}$ for $i\neq k$.
\end{proposition}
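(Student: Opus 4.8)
The plan is to derive all three identities from the separation formula (\Cref{thm: separation formula}) by comparing two expansions of one and the same cluster variable $x_{\ell;t}$: the expansion relative to the seed pattern rooted at $t_0$ and the expansion relative to the pattern rooted at $t_1$. The key structural fact is that changing the root from $t_0$ to $t_1$ amounts to a single matrix mutation $\mu_k$ of the extended exchange matrix, so the principal-coefficient patterns $\mathcal A_\bullet(B)$ (rooted at $t_0$) and $\mathcal A_\bullet(B_1)$ (rooted at $t_1$) differ by exactly one step of coefficient mutation, and one can control precisely how $F$-polynomials, $\mathbf g$-vectors and $\mathbf h$-vectors are affected.

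The heart of the matter is the $F$-polynomial recurrence \eqref{eq: recurrence f poly dwz}. I would work inside $\mathbb Q(x_1,\dots,x_n,y_1,\dots,y_n)$ with principal coefficients $(y_1,\dots,y_n)$ at $t_0$; by \Cref{def: F-poly}, $F_{\ell;t}^{B;t_0}$ is obtained from the cluster variable of the $t_0$-rooted pattern by the specialization $x_i\mapsto 1$, and the corresponding statement holds for the pattern rooted at $t_1$ (whose exchange matrix is $B_1$). Mutating $\begin{bsmallmatrix} B\\ I\end{bsmallmatrix}$ at $k$ in the $t_0\leftrightarrow t_1$ direction turns the bottom identity block into the identity matrix with its $k$-th row replaced by the vector whose $i$-th entry is $[b_{ki}]_+$ for $i\neq k$ and $-1$ for $i=k$; at the level of coefficient tuples this is exactly the $Y$-seed mutation $y_k'=y_k^{-1}$, $y_i'=y_iy_k^{[b_{ki}]_+}(1+y_k)^{-b_{ki}}$. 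Expressing $x_{\ell;t}$ via the $t_0$-rooted pattern versus the $t_1$-rooted pattern differs only through the change of initial variable $x_k\leftrightarrow x_{k;t_1}$; substituting $x_i=1$ and tracking how the $\hat y$-tuples of the two patterns relate --- they agree up to Laurent factors which, by \Cref{def: h vector fz}, are precisely $(1+y_k)^{h_k}$ and $(1+y_k')^{h_k'}$ --- yields \eqref{eq: recurrence f poly dwz}.

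The other two identities are then formal consequences of \eqref{eq: recurrence f poly dwz}. Identity \eqref{eq: hk and hk'} follows by applying to both sides the tropical specialization $y_i\mapsto x_i^{-1}\prod_j x_j^{[-b_{ji}]_+}$ of \Cref{def: h vector fz} (and its $B_1$-analogue for the $t_1$-side), evaluated in $\mathrm{Trop}(x_1,\dots,x_n)$, and comparing the exponent of $x_k$: the left side produces $h_k$, the right side $h_k'$, and their difference is the $k$-th coordinate $g_k$ of the $\mathbf g$-vector dictated by the separation formula. Identity \eqref{eq: recurrence g vector dwz} follows by substituting $y_i\mapsto \hat y_i$ back into \eqref{eq: recurrence f poly dwz} to recover the cluster variable, comparing with its separation-formula expansion relative to $t_1$ under the change of variables $\hat y\leftrightarrow\hat y'$ induced by $\mu_k$, and reading off $\mathbf g$-vectors coordinate by coordinate; the correction $[b_{ik}]_+g_k-b_{ik}h_k$ appears because $\hat y_k$ contributes an $x$-monomial governed by $g_k$ while the denominator factor $(1+y_k)^{h_k}$ contributes the $-b_{ik}h_k$.

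The main obstacle is entirely the bookkeeping in the second paragraph: one must simultaneously keep track of the exchange relation for $x_{k;t}$ in the root direction, the $Y$-seed mutation relating the two principal-coefficient patterns, and the fact that the $\hat y$-tuple of one pattern equals that of the other only up to a Laurent monomial --- the tropical evaluation defining $\mathbf h$ --- and then pin down the signs so that $(1+y_k)^{h_k}$, rather than some variant, appears. Once \eqref{eq: recurrence f poly dwz} is in place, \eqref{eq: hk and hk'} and \eqref{eq: recurrence g vector dwz} are routine tropical and degree computations. Since this recurrence is entirely standard, one may equally well just invoke \cite[\S 3 and \S 6]{MR2295199} and \cite[Proposition 2.4]{MR2629987}, where it is established.
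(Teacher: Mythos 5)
This proposition is not proved in the paper at all: it is imported verbatim from \cite{MR2295199} and \cite[Proposition 2.4]{MR2629987}, so the closing option in your proposal --- simply invoking those references --- is exactly what the paper does, and it is the only part of your argument that is strictly needed. Your sketch is, in outline, the standard derivation behind those references (compare the separation formula \Cref{thm: separation formula} applied to the same cluster variable viewed from the two adjacent seeds, use that the extended matrix with principal coefficients mutates as you describe, and extract the $\mathbf h$-data via the tropical evaluations of \Cref{def: h vector fz}), so it is a legitimate alternative to a bare citation. Two imprecisions are worth flagging if you were to write it out. First, the $\hat y$-tuples attached to the seeds at $t_0$ and $t_1$ inside one ambient field do not ``agree up to Laurent factors $(1+y_k)^{h_k}$ and $(1+y_k')^{h_k'}$'': they are related exactly by the $Y$-pattern mutation rule (a fact from \cite{MR2295199}); the $h$-exponents instead enter through the tropical denominator in \Cref{thm: separation formula} and through rewriting $x_{k;t_1}$ by the exchange relation before specializing the $x_i$ to $1$, so the bookkeeping you defer is precisely where the claimed factors have to be produced. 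Second, \eqref{eq: hk and hk'} is not a formal consequence of \eqref{eq: recurrence f poly dwz} alone, since the $F$-polynomial identity makes no reference to $g_k$; as you implicitly acknowledge, the separation formula must be invoked a second time there, and similarly for \eqref{eq: recurrence g vector dwz}. With these caveats the outline is consistent with the proofs in the cited sources; given that the paper treats the statement as a quoted result, the citation route is the appropriate one.
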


\section{Caldero--Chapoton functions}\label{section: cc function}

\subsection{\texorpdfstring{$F$-polynomials and CC functions}{F-polynomials and CC functions}}
Fix the ground field $K = \mathbb C$ and let $H = H_\mathbb C(C, D, \Omega)$ as in \Cref{subsec: def of h}. For $M\in \replf H$ and $\mathbf e\in \mathbb N^n$, the \emph{locally free quiver Grassmannian} is
\[
    \gr_\mathbf e^H(M) \coloneqq \{ N \mid \text{$N\subset M$ locally free and $\rank N = \mathbf e$} \}.
\]
It has a structure of quasi-projective (complex) variety, which will become clear in \Cref{section: generic bases}. Denote by $\chi(\cdot)$ the Euler characteristic in analytic topology.

\begin{definition}[\cite{MR3830892}]\label{def: locally free cc function}
    For $M\in \replf H$ with $\rank M = \mathbf m = (m_i)_{i\in I}$, the \emph{$F$-polynomial} of $M$ is
    \[
        F_M^H = F_M(y_1, \dots, y_n) \coloneqq \sum_{\mathbf r \in \mathbb N^n} \chi(\gr_\mathbf r^H(M)) \prod_{i = 1}^n y_i^{r_i} \in \mathbb Z[y_1, \dots y_n].
    \]
    The \emph{(locally free) Caldero--Chapoton function with principal coefficients} associated to $M$ is
    \[
        X_M^\bullet = \prod_{i=1}^n x_i^{-m_i} \cdot \sum_{\mathbf r\in \mathbb N^n} \chi(\gr_\mathbf r^H(M))\prod_{i=1}^n y_i^{r_i} \prod_{i=1}^n x_i^{\sum_{j=1}^n [-b_{ij}]_+m_j + b_{ij}r_j} \in \mathbb Z[y_1, \dots, y_n, x_1^\pm, \dots, x_n^\pm].
    \]
\end{definition}

The coefficient-free CC function $X_M$ is defined by evaluating $X_M^\bullet$ at $y_i = 1$. Notice that $\prod x_i^{-m_i}$ is the minimal denominator of $X_M$ as a Laurent polynomial; see for example \cite[Remark 4.5]{MR4732400}. We thus call $(m_i)_{i\in I}\in \mathbb N^n$ the $\mathbf d$-vector of $X_M$ (also of $X_M^\bullet)$.

\begin{remark}\label{rmk: cc function direct sum}
    It is clear from \cite{MR3660306} that for any $M, N\in \replf H$, we have 
    \[
        F_{M\oplus N} = F_M \cdot F_N \quad \text{and} \quad X_{M\oplus N} = X_M \cdot X_N,
    \]
    which holds for any $C$.
\end{remark}

\begin{remark}
    Using the injective $\mathbf g$-vector $\mathbf g_H^\mathrm{inj}(M) = (g_i)_{i\in I}$ (see \Cref{def: inj and proj g-vector}) and \Cref{lemma: g vector from rank vector}, the CC function $X_M^\bullet$ can also be expressed as
    \[
        X_M^\bullet =  F_M(\hat y_1, \dots, \hat y_n) \prod_{i = 1}^n x_i^{g_i}
    \]
    where $\hat y_i = y_i \prod_{j = 1}^{n} x_j^{b_{ji}}$.
\end{remark}

We define another vector $\mathbf h(M)\in \mathbb Z^n$ following \cite{MR2629987}. Recall the notations in \Cref{subsec: g vector f poly}.

\begin{definition}\label{def: h vector}
    Under the assumption that $F_M$ is in $\mathbb Q_\mathrm{sf}(y_1, \dots, y_n)$, we define $\mathbf h(M) = (h_i)_{i\in I}$ by
    \[
        x_1^{h_1}\cdots x_n^{h_n} = F_M \mid_{\mathrm{Trop}(x_1, \dots, x_n)}(x_i^{-1}\prod_{j\in I} x_j^{[-b_{ji}]_+} \leftarrow y_i ).
    \]
\end{definition}

\begin{lemma}\label{lemma: h vector}
    If $k\in I$ is a sink, then $h_k = -m_k$. If $k$ is a source and the map $M_{k, \mathrm{out}}$ is injective, then $h_k = 0$.
\end{lemma}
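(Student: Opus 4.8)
The plan is to compute the tropical evaluation in Definition~\ref{def: h vector} directly, term by term, using the combinatorial description of the $F$-polynomial $F_M$ in terms of Euler characteristics of locally free quiver Grassmannians. Write $F_M = \sum_{\mathbf r} \chi(\gr_\mathbf r^H(M))\prod_i y_i^{r_i}$; since every such Euler characteristic that actually occurs is a \emph{positive} integer (the empty and full subrepresentations always contribute, and more importantly for the tropical computation only the \emph{support} of $F_M$ matters, not the signs or sizes of coefficients), the tropical evaluation $F_M\mid_{\mathrm{Trop}(x_1,\dots,x_n)}(x_i^{-1}\prod_j x_j^{[-b_{ji}]_+}\leftarrow y_i)$ equals the componentwise \emph{minimum} over $\mathbf r$ in the support of $F_M$ of the exponent vector of $\prod_i\bigl(x_i^{-1}\prod_j x_j^{[-b_{ji}]_+}\bigr)^{r_i}$. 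So $h_k$ is the minimum over such $\mathbf r$ of $-r_k + \sum_{i}[-b_{ik}]_+ r_i = -r_k + \sum_{i\colon (i,k)\in\Omega}[-b_{ik}]_+ r_i$ (using Definition~\ref{def: matrix b}).

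First I would treat the sink case. If $k$ is a sink of $\Omega$, then $(i,k)\in\Omega$ never holds, so $b_{ik}\le 0$ forces $b_{ik}=0$ for all $i$ and consequently the quantity to minimize is just $-r_k$. Its minimum over the support is attained when $r_k$ is as large as possible; taking $\mathbf r = \rank M = \mathbf m$, i.e. $N = M$ itself (which is always a locally free subrepresentation, contributing $\chi=1$ to $F_M$), gives $r_k=m_k$, and no locally free subrepresentation can have a larger $k$-th rank. Hence $h_k=-m_k$.

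Next, the source case. If $k$ is a source and $M_{k,\mathrm{out}}$ is injective, I want to show the minimum of $-r_k+\sum_{i\colon(i,k)\in\Omega}[-b_{ik}]_+ r_i$ over the support of $F_M$ is $0$, attained at $\mathbf r=0$ (the zero subrepresentation, contributing $\chi=1$), which gives value $0$. It remains to show no locally free subrepresentation $N\subseteq M$ gives a strictly negative value, i.e. $r_k \le \sum_{i\colon(i,k)\in\Omega}[-b_{ik}]_+ r_i = \sum_i (\mathrm{rank}_{H_i}{}_iH_k/ c_i\text{-adjustments})\cdots$ — concretely, $r_k \le \sum_{i\colon(i,k)\in\Omega}(-b_{ik})\, r_i$ when all the $c_i$ are $1$, and in general one must be more careful. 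The cleanest route: for a locally free subrepresentation $N\subseteq M$, injectivity of $M_{k,\mathrm{out}}$ restricts to give that $N_{k,\mathrm{out}}\colon N_k \to \bigoplus_{i\colon(i,k)\in\Omega}{}_iH_k\otimes_{H_k}N_i$ is injective as well (it is the restriction of an injective map to a submodule, landing in the corresponding submodule of the target), and then one compares $H_k$-ranks. This requires knowing that an injective $H_k$-morphism from a free module $N_k$ into $\bigoplus_i {}_iH_k\otimes_{H_k}N_i$ forces $\mathrm{rank}_{H_k}N_k \le \mathrm{rank}_{H_k}\bigl(\bigoplus_i {}_iH_k\otimes_{H_k}N_i\bigr) = \sum_i [-b_{ik}]_+ r_i$, using that ${}_iH_k$ is free of rank $-b_{ik}$ as a right $H_k$-module (from Definition~\ref{def: matrix b} and the bimodule structure in Section~\ref{subsec: def of h}). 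That rank inequality over the local ring $H_k=K[\varepsilon_k]/\varepsilon_k^{c_k}$ is elementary (length considerations, or reduce modulo the radical). This yields $r_k\le\sum_i[-b_{ik}]_+r_i$, hence the value is $\ge 0$, so $h_k=0$.

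The main obstacle I anticipate is the bookkeeping in the source case: one must verify carefully that for an \emph{arbitrary} locally free subrepresentation $N$, the relevant restricted structure map is still injective (not merely that $M_{k,\mathrm{out}}$ is), and that the target of $N_{k,\mathrm{out}}$ has the expected $H_k$-rank — this uses that $N_i$ is a free $H_i$-module and that tensoring the free bimodule ${}_iH_k$ with a free $H_k$-module behaves well rank-wise. The subtlety with non-trivial symmetrizers ($c_i>1$) is that one works over the non-semisimple ring $H_k$, so "rank" must be handled via length or via $-\otimes_{H_k} K$; neither is hard but both need a line of justification. Everything else (the sink case, and identifying the tropical evaluation with a minimum of linear forms over the support of $F_M$) is routine.
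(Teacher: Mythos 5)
Your proposal is correct in substance and follows the paper's own argument: read the tropical evaluation in \Cref{def: h vector} as the minimum of the exponent of $x_k$ over the rank vectors occurring in $F_M$ (the paper phrases this via Newton polytope vertices, you via the support, which is equivalent here), attain the minimum at a sink by a locally free submodule of full rank at $k$ (the paper uses $E_k^{m_k}$, you use $M$ itself), and at a source restrict the injective $M_{k,\mathrm{out}}$ to an arbitrary locally free submodule and compare $H_k$-ranks, so the form is nonnegative and the zero submodule gives $0$. What must be repaired is a systematic transposition of $B$-indices relative to \Cref{def: matrix b} (where $(i,j)\in\Omega$ is an arrow $j\to i$, $b_{ij}=-c_{ij}>0$, $b_{ji}=c_{ji}<0$): the exponent of $x_k$ is $-r_k+\sum_i[-b_{ki}]_+r_i$, not $-r_k+\sum_i[-b_{ik}]_+r_i$. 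With the correct indices, at a sink $k$ all $b_{ki}\geq 0$, so the sum vanishes; your stated justification (``$(i,k)\in\Omega$ never holds, so $b_{ik}=0$ for all $i$'') is false as written, since a sink has incoming arrows $(k,i)\in\Omega$ forcing $b_{ik}=c_{ik}<0$, and your conclusion only survives because this slip cancels against the transposed exponent formula. Similarly, in the source case the relevant coefficient is $[-b_{ki}]_+=-c_{ki}$, the rank of ${_kH_i}$ as a left $H_k$-module, and the target of $N_{k,\mathrm{out}}$ is $\bigoplus_{i\colon (i,k)\in\Omega}{_kH_i}\otimes_{H_i}N_i$; your ${_iH_k}\otimes_{H_k}N_i$ does not typecheck and the rank is $-b_{ki}$, not $-b_{ik}$. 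Once the indices are fixed, your argument, including the two points you rightly flag (the restriction of $M_{k,\mathrm{out}}$ lands in the submodule target and stays injective because ${_kH_i}\otimes_{H_i}N_i\hookrightarrow{_kH_i}\otimes_{H_i}M_i$ by freeness, and the length comparison over $H_k=K[\varepsilon_k]/\varepsilon_k^{c_k}$), coincides with the paper's proof.
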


\begin{proof}
    This exercise follows from the idea in \cite[Proposition 3.3]{MR2629987}. In fact, in the tropical evaluation of \Cref{def: h vector}, we can replace $F_M(y_1, \dots, y_n)$ with a sum of (coefficient-one) monomials with exponents being the vertices of the Newton polytope of $F_M$ (the convex hull in $\mathbb R^n$ of the exponents of monomials in $F_M$). The exponent of $x_k$ in the substitution
    \[
        y_1^{r_1}\dots y_n^{r_n} \mid (x_i^{-1}\prod_{j\in I} x_j^{[-b_{ji}]_+} \leftarrow y_i )
    \]
    reads as $-r_k + \sum_{i\in I}[-b_{ki}]_+r_i$. Then $h_k$ is the minimum of such from $(r_i)_{i\in I}$ the rank vectors of locally free submodules representing Newton polytope vertices.
    
    If $k$ is a sink, then $h_k = -m_k$ is attained by taking the submodule $E_k^{m_k}$. If $k$ is a source such that $M_{k, \mathrm{out}}$ is injective, the restriction of $M_{k, \mathrm{out}}$ on any submodule is again injective. Then for any submodule with rank $(r_i)_{i\in I}$, we have $-r_k + \sum_{i\in I}[-b_{ki}]_+r_i \geq 0$. It follows that $h_k = 0$ by taking the zero submodule.
\end{proof}

\subsection{Recurrence under reflections}

We recall the following recurrence of locally free $F$-polynomials under reflections.

Let $k\in I$. The following proposition concerns with $M\in \replf H$ satisfying the property that
\[
    \text{the map $M_{k, \mathrm{in}}$ is surjective when $k$ is a sink or $M_{k, \mathrm{out}}$ is injective when $k$ is a source ($\star$)}.
\]

\begin{proposition}[{\cite[Prop 4.7 and Cor 4.8]{MR4732400}}]\label{prop: recurrence under reflection uniform ver}
     Let $k$ be either a sink or a source. Let $M\in \replf H$ be of rank $\mathbf m = (m_i)_{i\in I}$ satisfying $(\star)$. Let $h_k(M) = -m_k$ when $k$ is a sink and $h_k(M) = 0$ when $k$ is a source. Denote by $M'\coloneqq F_k^\pm(M)\in \rep s_k(H)$ the reflection of $M$ at $k$.
    \begin{enumerate}
        \item The module $M'$ is again locally free satisfying $(\star)$ and its rank vector is $\mathbf m' = s_{\alpha_k}(\mathbf m)$.
        
        \item The $F$-polynomials satisfy the equation
        \begin{equation}\label{eq: recurrence f poly uni ver}
            (1 + y_k)^{h_k(M)} F_M^H(y_1,\dots, y_n) = (1 + y_k')^{h_k(M')}F_{M'}^{s_k(H)}(y_1', \dots, y_n').
        \end{equation}
        where $y_i' = y_iy_k^{[b_{ki}]_+}(y_k+1)^{-b_{ki}}$ for $i\neq k$ and $y_k' = y_k^{-1}$.
        \item The (injective) $\mathbf g$-vectors
        \[
            \mathbf g^\mathrm{inj}_H(M) = (g_i)_{i\in I} \quad \text{and} \quad \mathbf g^\mathrm{inj}_{s_k(H)}(M') = (g'_i)_{i\in I}
        \]
        are related by
        \begin{equation}\label{eq: recurrence g vector uni ver}
             g_k' = - g_k \quad \text{and} \quad g_i' = g_i + [b_{ik}]_+g_k - b_{ik}h_k(M) \quad \text{for $i\neq k$}.
        \end{equation}
    \end{enumerate}
\end{proposition}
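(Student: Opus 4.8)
The plan is to prove the three assertions by comparing the locally free quiver Grassmannians of $M$ with those of $M' \coloneqq F_k^\pm(M)$. I would treat $k$ a sink of $\Omega$ in detail, the source case being dual (run the analogous argument on the cokernel sequence $0 \to M_k \to \bigoplus_j {}_kH_j \otimes_{H_j} M_j \to M'_k \to 0$, or invoke the duality exchanging sinks and sources). Since $F_k^\pm$ is additive, $F$-polynomials are multiplicative (\Cref{rmk: cc function direct sum}), and condition $(\star)$ passes to direct summands, one may assume $M$ indecomposable if convenient. Write $\mathbf m = (m_i)$ for $\rank M$ and set $V \coloneqq \bigoplus_{j\colon (k,j)\in\Omega} {}_kH_j \otimes_{H_j} M_j$; this depends only on $(M_j)_{j\ne k}$, so it is unchanged by $F_k^+$, and it is free over $H_k$ since each ${}_kH_j$ is free as a left $H_k$-module and each $M_j$ is free over $H_j$.

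For (1): condition $(\star)$ makes $M_{k,\mathrm{in}}\colon V \to M_k$ surjective; as $M_k$ is free, hence projective, over the local self-injective ring $H_k = K[\varepsilon_k]/(\varepsilon_k^{c_k})$, the sequence $0 \to M'_k \to V \to M_k \to 0$ splits over $H_k$, so $M'_k = \ker M_{k,\mathrm{in}}$ is a summand of a free module and thus free; hence $M'$ is locally free. Counting $H_k$-ranks gives $\rank_{H_k} M'_k = \sum_{j} (-c_{kj}) m_j - m_k$, which is exactly the $k$-th coordinate of $s_{\alpha_k}(\mathbf m)$ relative to the form $(\alpha_k,\alpha_j) = c_k c_{kj}$, the other coordinates being unchanged because $k$ has no outgoing arrow; so $\rank M' = s_{\alpha_k}(\mathbf m)$. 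Finally $M'_{k,\mathrm{out}}$ is the inclusion $M'_k \hookrightarrow V$, which is injective, so $M'$ satisfies $(\star)$ as well.

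Part (2) is the crux. Because $k$ is a sink, a locally free submodule $N \subseteq M$ of rank $\mathbf r$ is the same as a locally free subobject $(N_i)_{i\ne k}$ of the restricted data together with a free $H_k$-submodule $N_k \subseteq M_k$ containing $M_{k,\mathrm{in}}(V_N)$, where $V_N \coloneqq \bigoplus_j {}_kH_j \otimes_{H_j} N_j$. I would stratify $\gr^H(M)$ by fixing $(N_i)_{i\ne k}$ and varying $N_k$; to such an $N$ one associates on the $s_k(H)$-side the subobject with $N'_i = N_i$ ($i\ne k$) and $N'_k \coloneqq V_N \cap M'_k = \ker(M_{k,\mathrm{in}}|_{V_N})$, which automatically satisfies the source compatibility $M'_{k,\mathrm{out}}(N'_k) \subseteq V_N$, and has $\rank N' = s_{\alpha_k}(\mathbf r)$. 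The key point — and where surjectivity in $(\star)$ is indispensable — is that, over a fixed base $(N_i)_{i\ne k}$, the locally free choices of $N_k$ are controlled (after a suitable further stratification, using additivity of Euler characteristic) by ordinary complex Grassmannians $\mathrm{Gr}(s, m_k)$, whose Euler characteristics are $\binom{m_k}{s}$. Summing the fibre contributions over $s$ gives $\sum_s \binom{m_k}{s} y_k^s = (1+y_k)^{m_k} = (1+y_k)^{-h_k(M)}$ (using $h_k(M) = -m_k$ from \Cref{lemma: h vector} and $h_k(M') = 0$), and tracking how the ranks at $k$ and at its neighbours change under $\mathbf r \mapsto s_{\alpha_k}(\mathbf r)$ produces exactly the substitution $y_k' = y_k^{-1}$, $y_i' = y_i y_k^{[b_{ki}]_+}(y_k+1)^{-b_{ki}}$. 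I expect this comparison to be the main obstacle: local freeness is \emph{not} preserved by intersections of free $H_k$-submodules (the nilpotent loop $\varepsilon_k$ can create a non-free socle contribution), so the naive bijection between the two families of Grassmannians breaks down, and one genuinely needs $(\star)$ to carve out and identify the correct strata — the hereditary (quiver) analogue being classical.

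Part (3) is then formal. By \Cref{lemma: g vector from rank vector}, $\mathbf g^\mathrm{inj}_H(M)$ and $\mathbf g^\mathrm{inj}_{s_k(H)}(M')$ are explicit functions of $\mathbf m$ and $\mathbf m' = s_{\alpha_k}(\mathbf m)$ built from $B = B(C,\Omega)$ and from $B(C,s_k(\Omega)) = \mu_k(B)$ (reversing arrows at a sink or source is precisely mutation of $B$), while $h_k(M)$ is the quantity pinned down in \Cref{lemma: h vector}. Substituting $\mathbf m' = s_{\alpha_k}(\mathbf m)$ and simplifying — a computation formally identical to the Fomin--Zelevinsky recurrence recalled in \Cref{prop: recurrence g vector f poly dwz} — yields $g_k' = -g_k$ and $g_i' = g_i + [b_{ik}]_+ g_k - b_{ik} h_k(M)$ for $i \ne k$, completing the proposal.
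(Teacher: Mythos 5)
First, note that the paper does not prove this proposition at all: it is quoted verbatim from \cite[Prop.~4.7 and Cor.~4.8]{MR4732400}, so what you are really offering is a replacement for the proof in that reference. Your parts (1) and (3) are fine: surjectivity of $M_{k,\mathrm{in}}$ onto the free (hence projective) module $M_k$ splits the sequence over the local ring $H_k$, so the kernel is free, and the rank count and the $\mathbf g$-vector recursion are indeed formal consequences of \Cref{lemma: g vector from rank vector} together with $\mathbf m' = s_{\alpha_k}(\mathbf m)$ and $B(C,s_k(\Omega)) = \mu_k(B)$.

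The genuine gap is in part (2), which you yourself flag as ``the main obstacle'' without closing it. Two concrete problems. First, the assignment $N \mapsto N'$ with $N'_k \coloneqq \ker\bigl(M_{k,\mathrm{in}}|_{V_N}\bigr)$ does not do what you claim: the restriction of $M_{k,\mathrm{in}}$ to $V_N$ is an arbitrary $H_k$-map (surjectivity in $(\star)$ controls only the full map), so its kernel and image need not be free $H_k$-modules, and even when the kernel is free its rank is $\sum_j(-c_{kj})r_j - \operatorname{rank}(\mathrm{im})$, not the $k$-th entry of $s_{\alpha_k}(\mathbf r)$ unless the image happens to have rank $r_k$. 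So the asserted ``$\rank N' = s_{\alpha_k}(\mathbf r)$'' is false in general, and there is no stratum-by-stratum bijection of Grassmannians; indeed the very presence of the correction factors $(1+y_k)^{h_k(M)}$ and $(1+y_k')^{h_k(M')}$ in (\ref{eq: recurrence f poly uni ver}) shows the correspondence cannot be one-to-one. Second, the fibrewise claim that, over a fixed $(N_i)_{i\neq k}$, the admissible free submodules $N_k$ with $\mathrm{im}(M_{k,\mathrm{in}}|_{V_N}) \subseteq N_k \subseteq M_k$ are ``controlled by ordinary Grassmannians $\mathrm{Gr}(s,m_k)$ with $\chi = \binom{m_k}{s}$'' is exactly the statement that needs proof: one must stratify by the (generally non-free) image, show that the variety of free $H_k$-submodules of $M_k$ containing a given submodule (respectively, of free submodules of the kernel) has the Euler characteristic of an appropriate Grassmannian over the residue field, and then verify that summing these contributions on both sides, with the rank bookkeeping at $k$ and its neighbours, produces precisely the substitution $y_k' = y_k^{-1}$, $y_i' = y_iy_k^{[b_{ki}]_+}(1+y_k)^{-b_{ki}}$ together with the prefactors $(1+y_k)^{\pm}$. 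None of this is carried out in your sketch; it is the actual content of \cite[Prop.~4.7]{MR4732400}, and without it the identity (\ref{eq: recurrence f poly uni ver}) is asserted rather than proven.
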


\begin{remark}\label{rmk: h vectors}
    In the situation of \Cref{prop: recurrence under reflection uniform ver}, we have
    \[
        h_k(M)h_k(M') = 0 \quad \text{and} \quad g_k = h_k(M) - h_k(M').
    \]
\end{remark}

\subsection{Cluster variables as CC functions}\label{subsec: cluster variables cc function}

This section gives the proof of our first main theorem \Cref{thm: first main theorem intro}, that is, in the affine case for any given $\beta\in \Delta_\mathrm{rS}$, the CC function $X_{M(\beta)}$ of the locally free indecomposable rigid module $M(\beta)$ (whose rank vector is $\beta$) equals the cluster variable whose $\mathbf d$-vector is $\beta$.

We start with the roots in an infinite $c$-orbit described in \Cref{prop: infinite c orbits}.

First let $\beta = \beta_\ell = s_1s_2\dots s_{\ell-1}(\alpha_\ell)$ for $\ell\in I$. Consider in $\mathbb T_n$ the following path
\begin{equation}\label{eq: sink sequence}
    \begin{tikzcd}
        t_0 \ar[r, dash, "1"] & t_1 \ar[r, dash, "2"] & \cdots \ar[r, dash, "n-1"] & t_{n-1} \ar[r, dash, "n"] & t_n.
    \end{tikzcd}
\end{equation}
Let $(C, D, \Omega)$ be associated to $t_0$ and subsequently reflect $\Omega$ along the path, that is, let $\Omega(t_i) \coloneqq s_i\dots s_1(\Omega)$ be associated with $t_i$ for $i\in I$. Thus we denote $H(t_i) = H(C, D, \Omega(t_i))$. Let $E_\ell(t_i)\in \rep H(t_i)$ denote the pseudo-simple module of rank $\alpha_\ell$. Let $P_{\ell}( t_i) = H(t_i)e_\ell \in \rep H(t_i)$ denote the projective cover of $E_\ell(t_i)$.

\begin{lemma}\label{lemma: f poly g vec projective}
    For any $\ell\in I$, we have
    \[
        \rank P_\ell(t_0) = \beta_\ell,\quad F_{\ell; t_n}^{B; t_0} = F_{P_{\ell}(t_0)} \quad \text{and} \quad \mathbf g_{\ell; t_n}^{B; t_0} = \mathbf g^{\mathrm{inj}}(P_\ell(t_0)).
    \]
\end{lemma}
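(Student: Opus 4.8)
The plan is to realize $P_\ell(t_0)$ as an iterated reflection of a pseudo-simple module and then to transport \emph{simultaneously} the representation-theoretic recurrence of \Cref{prop: recurrence under reflection uniform ver} and the cluster-theoretic recurrence of \Cref{prop: recurrence g vector f poly dwz} backwards along the path \eqref{eq: sink sequence}, matching them step by step. First the rank vector: since vertex $\ell$ is a sink of $\Omega(t_{\ell-1}) = s_{\ell-1}\cdots s_1(\Omega)$, the pseudo-simple $E_\ell(t_{\ell-1})$ is the projective cover of the simple at $\ell$, hence projective, hence locally free, rigid, and (by \Cref{prop: rigid implies tau loc free}) $\tau$-locally free; moreover $P_\ell(t_0) = F_1^-\cdots F_{\ell-1}^-(E_\ell(t_{\ell-1}))$, the construction recalled in \Cref{subsection: coxeter orbit}, so \Cref{prop: tau loc free reflection}(3) gives $\rank P_\ell(t_0) = s_1\cdots s_{\ell-1}(\alpha_\ell) = \beta_\ell$. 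Since mutating at $\ell+1,\dots,n$ never involves $x_\ell$, we have $x_{\ell;t_n} = x_{\ell;t_\ell}$ as elements of the fraction field, so it suffices to prove $F^{B;t_0}_{\ell;t_\ell} = F_{P_\ell(t_0)}$ and $\mathbf g^{B;t_0}_{\ell;t_\ell} = \mathbf g^{\mathrm{inj}}(P_\ell(t_0))$.

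Next I would introduce the intermediate modules $N_s \coloneqq F_{s+1}^-\cdots F_{\ell-1}^-(E_\ell(t_{\ell-1}))$ for $0\le s\le \ell-1$, so that $N_{\ell-1} = E_\ell(t_{\ell-1})$, $N_0 = P_\ell(t_0)$, and $N_{s-1} = F_s^-(N_s)$ over $H(t_s)$ (vertex $s$ is a source of $\Omega(t_s)$ and $s_s(\Omega(t_s)) = \Omega(t_{s-1})$). Each $N_s$ is nonzero (otherwise $P_\ell(t_0)=0$), indecomposable and $\tau$-locally free by \Cref{prop: tau loc free reflection}(1), and is not isomorphic to $E_s$ (else $N_{s-1}=0$); hence $(N_s)_{s,\mathrm{out}}$ is injective by \Cref{prop: tau loc free reflection}(2), so $N_s$ satisfies the property $(\star)$ at $s$ and \Cref{prop: recurrence under reflection uniform ver} applies at every step. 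The base case $s=\ell-1$ is a direct computation: the only locally free submodules of $E_\ell(t_{\ell-1})$ are $0$ and itself, so $F_{E_\ell(t_{\ell-1})} = 1 + y_\ell$, which equals $F^{B(t_{\ell-1});t_{\ell-1}}_{\ell;t_\ell}$, the $F$-polynomial of a once-mutated cluster variable; and by \Cref{lemma: g vector from rank vector} the injective $\mathbf g$-vector of $E_\ell(t_{\ell-1})$ is $-e_\ell + \sum_i[-b_{i\ell}]_+e_i$ (entries of $B(t_{\ell-1})\coloneqq B(C,\Omega(t_{\ell-1}))$), which is exactly $\mathbf g^{B(t_{\ell-1});t_{\ell-1}}_{\ell;t_\ell}$.

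The key step is a downward induction on $s$ establishing $F^{B(t_s);t_s}_{\ell;t_\ell} = F^{H(t_s)}_{N_s}$ and $\mathbf g^{B(t_s);t_s}_{\ell;t_\ell} = \mathbf g^{\mathrm{inj}}_{H(t_s)}(N_s)$, the case $s=\ell-1$ being the base case above. For the inductive step one uses that $B(t_{s-1}) = \mu_s(B(t_s))$, so that \Cref{prop: recurrence g vector f poly dwz} (applied with base seed moving $t_s\to t_{s-1}$ and mutation direction $s$) and \Cref{prop: recurrence under reflection uniform ver} (applied with $M = N_s$, $k = s$, $M' = N_{s-1}$) are governed by the same matrix $B(t_s)$ and the same vertex, and have literally the same shape. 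Since $s$ is a source of $\Omega(t_s)$ with $(N_s)_{s,\mathrm{out}}$ injective, \Cref{lemma: h vector} identifies the $s$-th entry of $\mathbf h(N_s)$ — hence, via the inductive hypothesis, of the cluster $\mathbf h$-vector of $x_{\ell;t_\ell}$ at $t_s$ — with $0$; together with \Cref{rmk: h vectors} and the relation \eqref{eq: hk and hk'}, the exponents and the substitution $y\mapsto y'$ in the two recurrences coincide, so the two recurrences return the same $F$-polynomial and $\mathbf g$-vector at $t_{s-1}$. Taking $s=0$, and using $N_0 = P_\ell(t_0)$ together with $x_{\ell;t_n}=x_{\ell;t_\ell}$, completes the proof.

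The hard part is precisely the bookkeeping in this last step: the two recurrences live over different algebras $H(t_s)$ and different base seeds, so one must check carefully that, once $B(t_s)$ is identified as the common exchange matrix, they are identical, and in particular that the $\mathbf h$-vectors computed from locally free quiver Grassmannians coincide with those computed inside the cluster algebra. Reconciling these two viewpoints is exactly the role of \Cref{lemma: h vector}, \Cref{rmk: h vectors} and the identities in \Cref{prop: recurrence g vector f poly dwz}.
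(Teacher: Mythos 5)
Your proposal is correct and follows essentially the same route as the paper: identify the cluster variable at $t_n$ (equivalently $t_\ell$) with the once-mutated variable at the seed $t_{\ell-1}$, match it there with the pseudo-simple $E_\ell(t_{\ell-1})$, and then transport backwards along the source reflections $F_{\ell-1}^-,\dots,F_1^-$ by comparing \Cref{prop: recurrence under reflection uniform ver} with \Cref{prop: recurrence g vector f poly dwz}, using \Cref{lemma: h vector} and \Cref{rmk: h vectors} to reconcile the $\mathbf h$-vector exponents. Your version only adds some explicit justification (projectivity/rigidity of $E_\ell(t_{\ell-1})$ at a sink, ruling out $N_s\cong E_s$ to get property $(\star)$) that the paper leaves to \Cref{prop: tau loc free reflection}.
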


\begin{proof}
    We first note that $B_{t_n} = B_{t_0} = B$. By definition, we have
    \[
        F_{\ell; t_n}^{B_{t_n}; t_n} = \dots = F_{\ell; t_n}^{B_{t_\ell}; t_\ell} = 1
        \quad \text{and} \quad
        \mathbf g_{\ell; t_n}^{B_{t_n}; t_n} = \dots = \mathbf g_{\ell; t_n}^{B_{t_\ell}; t_\ell} = e_\ell.
    \]
    Consider the mutation on the edge $t_{\ell-1} \frac{\ell}{\quad\quad} t_\ell$. It is clear that
    \[
        F_{\ell; t_n}^{B_{t_{\ell-1}}; t_{\ell-1}} = 1 + y_\ell \quad \text{and} \quad
        \mathbf g_{\ell; t_n}^{B_{t_{\ell-1}}; t_{\ell-1}} = -e_\ell + \sum_{i\in I} [-b_{i,\ell}^{t_{\ell-1}}]_+e_i.
    \]
    Notice that on $t_{\ell-1}\in \mathbb T_n$, the vertex $\ell$ is a sink of $\Omega(t_{\ell-1})$. Thus we have
    \begin{equation}\label{eq: base case f poly g vec}
        F_{\ell; t_n}^{B_{t_{\ell-1}}; t_{\ell-1}} = F_{E_{\ell}(t_{\ell-1})} \quad \text{and} \quad
        \mathbf g_{\ell; t_n}^{B_{t_{\ell-1}}; t_{\ell-1}} = \mathbf g^\mathrm{inj}(E_\ell(t_{\ell-1}))\quad \text{(\Cref{lemma: g vector from rank vector})}.
    \end{equation}

    Every $i\in I$ is a source of $\Omega(t_i)$. Then we define a sequence of modules $M(t_{\ell-a})\in \rep H(t_{\ell-a})$ for $a = 1, \dots, \ell$ by the iteration
    \[
         M(t_{\ell-1}) \coloneqq E_{\ell}(t_{\ell-1}) \quad \text{and} \quad  M(t_{\ell-a-1}) = F_{\ell-a}^-(M(t_{\ell-a})).
    \]
    Using \Cref{prop: tau loc free reflection}, each $M(t_{\ell-a})$ is locally free rigid indecomposable and each reflection satisfies the assumption in \Cref{prop: recurrence under reflection uniform ver}. In particular, we have
    \[
        \rank M(t_{\ell-a}) = s_{\ell-a-1}\dots s_{\ell-1}(\alpha_\ell) \quad \text{and} \quad M(t_{0}) = P_{\ell}(t_0).
    \]
    Finally we apply the recurrence of \Cref{prop: recurrence under reflection uniform ver} to the path (from the right to the left)
    \[
        \begin{tikzcd}
            t_0 \ar[r, dash, "1"] & t_1 \ar[r, dash, "2"] & \cdots \ar[r, dash, "\ell-1"] & t_{\ell-1}
        \end{tikzcd}
    \]
    with the base case (\ref{eq: base case f poly g vec}). Notice that in the base case, we can define as in \Cref{def: h vector} $\mathbf h(M(t_{\ell-1}))$ because $F_{M(t_{\ell-1})}$ is in $\mathbb Q_\mathrm{sf}(y_1, \dots, y_n)$. Then by \Cref{lemma: h vector}, the $(\ell-1)$-th component of $\mathbf h(M(t_{\ell-1}))$ is exactly $h_{\ell-1}(M(t_\ell-1))$ of \Cref{prop: recurrence under reflection uniform ver}. Inductively using \Cref{prop: recurrence under reflection uniform ver} for $a=1, \dots, \ell$ and comparing with the recurrence \Cref{prop: recurrence g vector f poly dwz}, we have that each $F_{M(t_{\ell-a})}$ is in $\mathbb Q_\mathrm{sf}(y_1, \dots, y_n)$ (thus $\mathbf h(M(t_{\ell-a}))$ is well-defined), and
    \[
        F_{\ell; t_n}^{B_{t_{\ell-s}}; t_{\ell-a}} = F_{M(t_{\ell-a})} \quad \text{and} \quad
        \mathbf g_{\ell; t_n}^{B_{t_{\ell-a}}; t_{\ell-a}} = \mathbf g^\mathrm{inj}(M(t_{\ell-a})).
    \]
    In particular, letting $a = \ell$ finishes the proof.
\end{proof}

Let $\beta = c^r (\beta_\ell)$ for $\ell\in I$ and $r\in \mathbb N$. Replicate the mutation sequence (\ref{eq: sink sequence}) $r+1$ times to have the path in $\mathbb T_n$
\begin{equation}\label{eq: sink sequence r times}
    \begin{tikzcd}
        t_0 \ar[r, dash, "1"] & t_1 \ar[r, dash, "2"] & \cdots \ar[r, dash, "n-1"] & t_{(r+1)n-1} \ar[r, dash, "n"] & t_{(r+1)n}.
    \end{tikzcd}
\end{equation}

\begin{proposition}\label{prop: f poly g vec preprojective}
    Suppose that the Cartan matrix $C$ is not of finite type. For any $\ell$, $r\in \mathbb N$ and $\beta = c^r (\beta_\ell)$, we have
    \[
        \rank \tau^{-r}P_\ell(t_0) = \beta, \quad F_{\ell; t_{(r+1)n}}^{B; t_0} = F_{\tau^{-r}P_{\ell}(t_0)} \quad \text{and} \quad \mathbf g_{\ell; t_{(r+1)n}}^{B; t_0} = \mathbf g^{\mathrm{inj}}(\tau^{-r}P_\ell(t_0)).
    \]
\end{proposition}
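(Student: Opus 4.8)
The plan is to prove the three identities by induction on $r$, with \Cref{lemma: f poly g vec projective} serving as the base case $r = 0$. For the inductive step, I want to compare the seed data along one full block of the replicated mutation path (\ref{eq: sink sequence r times}) — that is, the $n$ consecutive edges labeled $1, 2, \dots, n$ taking $t_{(s)n}$ to $t_{(s+1)n}$ — with the effect of applying the Coxeter functor $C^-$ on the module side. First I would observe that since each $\Omega(t_i)$ is a reflection of $\Omega$ and the path cyclically reruns the sinks $1, 2, \dots, n$, we have $B_{t_{(s+1)n}} = B_{t_{(s)n}} = B$ for every $s$, and the orientations return to $\Omega$ after each block. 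Writing $N_s \coloneqq \tau^{-s} P_\ell(t_0)$, the claim $\rank N_s = c^s(\beta_\ell)$ is immediate from \Cref{prop: tau loc free reflection}(3) together with \Cref{thm: coxeter functors AR translation} (the twist $T$ does not change the rank vector), once we know $N_s$ is nonzero — which holds because $C$ is not of finite type, so the preprojective component is infinite and no $\tau^{-s}P_\ell$ vanishes; each $N_s$ is locally free rigid indecomposable by \Cref{prop: reflection of loc free and rigid} and \Cref{prop: tau loc free reflection}(1).

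The heart of the argument is then the propagation of the $F$-polynomial and $\mathbf g$-vector identities across one block. Assume $F_{\ell; t_{(s+1)n}}^{B; t_{sn}} = F_{N_s}$ and $\mathbf g_{\ell; t_{(s+1)n}}^{B; t_{sn}} = \mathbf g^{\mathrm{inj}}(N_s)$ as the inductive hypothesis (here $N_s$ is regarded as an $H(t_{sn})$-module, and $\Omega(t_{sn}) = \Omega$). I would factor $C^- = F_1^- \circ \cdots \circ F_n^-$ and run \Cref{prop: recurrence under reflection uniform ver} along the block $t_{(s+1)n-1} \frac{n}{\quad} \cdots \frac{1}{\quad} t_{sn}$ read right to left, applying the reflection functors $F_n^-, F_{n-1}^-, \dots, F_1^-$ in turn. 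At each edge the relevant vertex is a source of the current orientation (since along (\ref{eq: sink sequence r times}) we mutate at sources when moving backward, exactly as in the proof of \Cref{lemma: f poly g vec projective}), so hypothesis $(\star)$ holds for the intermediate modules by \Cref{prop: tau loc free reflection}(2) — none of them equals the relevant pseudo-simple $E_k$ because they are indecomposable of rank a genuine affine real Schur root, not $\alpha_k$ — and \Cref{lemma: h vector} pins down the $k$-th component of the $\mathbf h$-vector to match $h_k(M)$ in \Cref{prop: recurrence under reflection uniform ver}. Comparing the recurrences (\ref{eq: recurrence f poly uni ver}), (\ref{eq: recurrence g vector uni ver}) for $F$-polynomials and $\mathbf g$-vectors against the Fomin--Zelevinsky recurrences (\ref{eq: recurrence f poly dwz}), (\ref{eq: recurrence g vector dwz}) of \Cref{prop: recurrence g vector f poly dwz}, and noting both start from the same data at $t_{(s+1)n-1}$ (here I need the cyclic identification of seeds: the seed at $t_{sn}$ with $B$ and the seed at $t_{(s+1)n}$ with $B$ play symmetric roles, so the block behaves as a single "super-mutation" matching $C^-$), I conclude $F_{\ell; t_{(s+1)n}}^{B; t_{sn}} = F_{C^-(N_s)}$ and $\mathbf g_{\ell; t_{(s+1)n}}^{B; t_{sn}} = \mathbf g^{\mathrm{inj}}(C^-(N_s))$ in $\rep H$. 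Since $\mathbf g^{\mathrm{inj}}$ and $F_M$ are insensitive to the twist $T$ ($T$ does not change ranks, submodule lattices, or injective co-presentations up to sign conventions), and $TC^-(N_s) \cong \tau^-(N_s) = N_{s+1}$ by \Cref{thm: coxeter functors AR translation}, this gives $F_{\ell; t_{(s+1)n}}^{B; t_{sn}} = F_{N_{s+1}}$ and $\mathbf g_{\ell; t_{(s+1)n}}^{B; t_{sn}} = \mathbf g^{\mathrm{inj}}(N_{s+1})$. Composing over $s = 0, \dots, r-1$ (the $F$-polynomial at $t_{(r+1)n}$ with respect to $t_0$ agrees with the one with respect to $t_{rn}$ since intermediate mutations do not affect the $F$-polynomial once we are past $t_{rn}$, by the definition in terms of principal coefficients) yields the claim for general $r$.

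The step I expect to be the main obstacle is making the "block = super-mutation" comparison fully rigorous: one has to carefully track that the composite of $n$ applications of \Cref{prop: recurrence under reflection uniform ver} along the block reproduces, variable-for-variable and exponent-for-exponent, what $n$ applications of \Cref{prop: recurrence g vector f poly dwz} produce, \emph{and} that the cumulative effect on the module side is precisely $C^-$ (not some other composite of reflection functors) — the bookkeeping of which vertex is a sink versus a source at each intermediate orientation, and the matching of the auxiliary $\mathbf h$-vectors at every edge via \Cref{lemma: h vector}, is where errors would creep in. A secondary subtlety is the passage between the $s_k(H)$-module picture produced by the reflection functors and the honest $H$-module $\tau^{-r}P_\ell(t_0)$ at the end of the block; this is handled by the twist automorphism $T$ and \Cref{thm: coxeter functors AR translation}, but one must check $T$ leaves $F_M$ and $\mathbf g^{\mathrm{inj}}(M)$ unchanged, which follows because $T$ is an equivalence preserving rank vectors and the subobject lattice in $\replf$.
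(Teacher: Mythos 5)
Your proposal runs on the same engine as the paper's proof: the base case is \Cref{lemma: f poly g vec projective}, and the propagation along the path (\ref{eq: sink sequence r times}) is done by applying the source reflections $F_n^-,\dots,F_1^-$ block by block, matching the module-side recurrence of \Cref{prop: recurrence under reflection uniform ver} against the seed recurrence of \Cref{prop: recurrence g vector f poly dwz}, with hypothesis $(\star)$ supplied by \Cref{prop: tau loc free reflection}, the $\mathbf h$-vector identification by \Cref{lemma: h vector}, and the passage from $C^-$ to $\tau^-$ by \Cref{thm: coxeter functors AR translation}. Your explicit observation that the twist $T$ preserves rank vectors, the locally free submodule varieties and injective copresentations (so $F_M$ and $\mathbf g^{\mathrm{inj}}(M)$ are unchanged, and $(C^-)^r P_\ell$ may be replaced by $\tau^{-r}P_\ell$) is a point the paper leaves implicit and is correct; likewise your use of the non-finite-type hypothesis to rule out vanishing (and hence to rule out intermediate modules being the pseudo-simple $E_k$ at the vertex about to be reflected) is the right reason, and is more convincing than the slogan ``the rank is not $\alpha_k$.''

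The one genuine problem is that the induction, as displayed, does not parse. Your inductive hypothesis $F_{\ell; t_{(s+1)n}}^{B; t_{sn}} = F_{N_s}$ and your conclusion $F_{\ell; t_{(s+1)n}}^{B; t_{sn}} = F_{C^-(N_s)}$ have the same left-hand side but different right-hand sides; worse, by the very translation symmetry of the path that you invoke, $F_{\ell; t_{(s+1)n}}^{B; t_{sn}} = F_{\ell; t_n}^{B; t_0} = F_{P_\ell(t_0)}$ for every $s$, so the stated hypothesis is just the base case and is false for $s\geq 1$ if read with $N_s = \tau^{-s}P_\ell(t_0)$. The parenthetical in your last step (``the $F$-polynomial at $t_{(r+1)n}$ with respect to $t_0$ agrees with the one with respect to $t_{rn}$'') is also false: those are $F_{\tau^{-r}P_\ell(t_0)}$ and $F_{P_\ell(t_0)}$ respectively, and bridging base seeds is exactly what the recurrences must do. The repair is the paper's bookkeeping: keep the cluster variable fixed at the far end $t_{(r+1)n}$ and induct on how far back the base seed has moved, proving $F_{\ell; t_{(r+1)n}}^{B; t_{(r-s)n}} = F_{(C^-)^s P_\ell(t_0)}$ and $\mathbf g_{\ell; t_{(r+1)n}}^{B; t_{(r-s)n}} = \mathbf g^{\mathrm{inj}}((C^-)^s P_\ell(t_0))$ for $s=0,\dots,r$, with $s=0$ given by \Cref{lemma: f poly g vec projective} applied to the last block. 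If you prefer your induction on $r$, you must first re-root by the translation automorphism, $F_{\ell; t_{(s+2)n}}^{B; t_n} = F_{\ell; t_{(s+1)n}}^{B; t_0}$, and only then move the base seed from $t_n$ to $t_0$ across one block of source reflections; with either formulation the rest of your argument goes through as written.
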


\begin{proof}
    The sequence of matrices $B_{t_s}$ is periodic on the path (\ref{eq: sink sequence r times}). In particular, we have
    \[
        B = B_{t_0} = B_{t_n} = \cdots = B_{t_{(r+1)n}}.
    \]
    Apply \Cref{lemma: f poly g vec projective} to the sub-path
    \[
        \begin{tikzcd}
            t_{rn} \ar[r, dash, "1"] & t_{rn+1} \ar[r, dash, "2"] & \cdots \ar[r, dash, "n-1"] & t_{(r+1)n-1} \ar[r, dash, "n"] & t_{(r+1)n}.
        \end{tikzcd}
    \]
    Immediately we obtain
    \[
        \rank P_\ell(t_0) = \beta_\ell,\quad F_{\ell; t_{(r+1)n}}^{B; t_{rn}} = F_{P_{\ell}(t_0)} \quad \text{and} \quad \mathbf g_{\ell; t_{(r+1)n}}^{B; t_{rn}} = \mathbf g^{\mathrm{inj}}(P_\ell(t_0)).
    \]
    Now we define a sequence of modules related by (source) reflections. For $s = 0, \dots, r-1$ and $i = 1, \dots, n$, define
    \[
        M(t_{(r-s)n - i}) \coloneqq F_{n-i+1}^- \circ \cdots \circ F_{n}^- \circ (C^-)^s(P_\ell(t_0)) \in \rep H(t_{(r-s)n-i}). 
    \]
    This sequence of reflections satisfy the assumptions in \Cref{prop: recurrence under reflection uniform ver} in view of \Cref{prop: tau loc free reflection}. Therefore their rank vectors, $F$-polynomials, $\mathbf h$-vectors and $\mathbf g$-vectors satisfy the recurrence in \Cref{prop: recurrence under reflection uniform ver}. Again, compare with the exact same recurrence in \Cref{prop: recurrence g vector f poly dwz}. Hence inductively, we obtain in particular that for $s = 0, \dots, r$
    \[
        \rank M(t_{(r-s)n}) = c^s\beta_\ell,\quad F_{\ell; t_{(r+1)n}}^{B; t_{(r-s)n}} = F_{M(t_{(r-s)n})} \quad \text{and} \quad \mathbf g_{\ell; t_{(r+1)n}}^{B; t_{(r-s)n}} = \mathbf g^{\mathrm{inj}}(M(t_{(r-s)n})).
    \]
    Taking $s = r$ finishes the proof.
\end{proof}

Instead of the path (\ref{eq: sink sequence}), consider in $\mathbb T_n$ the following path
\begin{equation}\label{eq: source sequence}
    \begin{tikzcd}
        v_0 = t_0 \ar[r, dash, "n"] & v_1 \ar[r, dash, "n-1"] & \cdots \ar[r, dash, "2"] & v_{n-1} \ar[r, dash, "1"] & v_n
    \end{tikzcd}
\end{equation}
and replace (\ref{eq: sink sequence r times}) with
\begin{equation}\label{eq: source sequence r times}
    \begin{tikzcd}
        t_0 \ar[r, dash, "n"] & v_1 \ar[r, dash, "n-1"] & \cdots \ar[r, dash, "2"] & v_{(r+1)n-1} \ar[r, dash, "1"] & v_{(r+1)n}.
    \end{tikzcd}
\end{equation}

Let $\inj_\ell(t_0)$ denote the injective envelope of $E_\ell$ in $\rep H$. Following the same strategy in \Cref{lemma: f poly g vec projective} and \Cref{prop: f poly g vec preprojective}, we can prove

\begin{proposition}\label{prop: f poly g vec preinjective}
    Suppose that the Cartan matrix $C$ is not of finite type. For any $\ell$, $r\in \mathbb N$ and $\beta = c^{-r} (\gamma_\ell)$, we have
    \[
        \rank \tau^{r}\inj_\ell(t_0) = \beta, \quad F_{\ell; v_{(r+1)n}}^{B; t_0} = F_{\tau^{r}\inj_{\ell}(t_0)} \quad \text{and} \quad \mathbf g_{\ell; v_{(r+1)n}}^{B; t_0} = \mathbf g^{\mathrm{inj}}(\tau^{r}\inj_\ell(t_0)).
    \]
\end{proposition}

In view of the classification of real Schur roots in affine case (see \Cref{prop: real schur roots affine type}), we are left with $\beta\in \Delta_\mathrm{rS}$ with finite $c$-orbit. By \Cref{thm: real schur roots rigid module}, there is a unique indecomposable locally free rigid $H$-module $M(\beta)$ such that $\rank M(\beta) = \beta$. There is also a cluster variable $X_\beta\in \mathcal A_\bullet(B)$ such that $\mathbf d(X_\beta) = \beta$. Denote by $F(X_\beta)$ and $\mathbf g(X_\beta)$ respectively the $F$-polynomial and $\mathbf g$-vector of $X_\beta$ with respect to the initial seed $(B, (x_1, \dots, x_n, y_1, \dots, y_n))$.

\begin{proposition}\label{prop: f poly g vec finite c orb}
    If $\beta\in \Delta_\mathrm{rS}$ has a finite $c$-orbit, then $F(X_\beta) = F_{M(\beta)}$ and $\mathbf g(X_\beta) = \mathbf g^\mathrm{inj}(M(\beta))$.
\end{proposition}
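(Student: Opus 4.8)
The plan is to reduce the finite $c$-orbit case to the already-handled infinite-orbit cases by a change of based seed, exactly as in the proofs of \Cref{lemma: f poly g vec projective}, \Cref{prop: f poly g vec preprojective} and \Cref{prop: f poly g vec preinjective}, but now propagating the recurrence \emph{along a tube} rather than along a preprojective/preinjective ray. First I would recall from \Cref{prop: tau rigid modules on tubes} that each $\beta = \beta^{(i)}_{(n,m)}$ with $n \le d_i - 1$ is the rank vector of an indecomposable locally free rigid module $M^{(i)}_{(n,m)}$, and that these are related by $\tau M^{(i)}_{(n,m)} = M^{(i)}_{(n,m+1)}$. The key point is that, unlike in the infinite-orbit case, on a tube one can always move \emph{downward} as well as across: using \Cref{prop: tau loc free reflection} and the fact that the $\tau$-orbit is a finite cycle, any $M^{(i)}_{(n,m)}$ is obtained from one whose rank vector lies in $\Phi_\mathrm{fin}^+$ (namely a module built inside $H_\mathrm{fin}$, as in the proof of \Cref{prop: tau rigid modules on tubes}) by a sequence of Coxeter functors $C^\pm$, i.e. by admissible sequences of sink/source reflections.

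Second, I would establish the base case: for a module $M$ that is locally free rigid over $H_\mathrm{fin}$ (the GLS algebra of the finite-type Cartan submatrix $I \setminus \{k\}$), its CC function $X_M$ — computed with respect to the \emph{full} initial seed of $\mathcal A_\bullet(B)$ — already equals the corresponding cluster variable. This should follow from the finite-type result of Geiss--Leclerc--Schr\"oer \cite{MR3830892}: the cluster algebra of the Cartan submatrix is a ``rank-$(n-1)$ subcluster algebra'' of $\mathcal A_\bullet(B)$, obtained by freezing the extended vertex $k$, and the corresponding $F$-polynomial and $\mathbf g$-vector (with the $k$-th coordinate appropriately extended) are unchanged; concretely, $F(x_\beta) = F_{M_\beta}$ holds because $\beta \in \Phi_\mathrm{fin}^+$ involves no $\alpha_k$, and $\mathbf g(x_\beta) = \mathbf g^{\mathrm{inj}}(M_\beta)$ by \Cref{lemma: g vector from rank vector}. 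One has to be a little careful to record $h$-vectors here so that \Cref{lemma: h vector} applies at the start of the induction, exactly as was done at (\ref{eq: base case f poly g vec}).

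Third, I would run the induction: fix an admissible sequence of sink/source reflections taking the base module $M$ to $M_\beta$, lift it to a path in $\mathbb T_n$ along which $B$ mutates (periodically, since $s_n\cdots s_1(\Omega)=\Omega$ etc.), and invoke \Cref{prop: recurrence under reflection uniform ver} at each step. Each reflection satisfies the hypothesis $(\star)$ by \Cref{prop: tau loc free reflection}(2) (the modules stay $\tau$-locally free and indecomposable, and are never pseudo-simple once we are above level $0$, or the pseudo-simple case is trivial), so the recurrences for $F$-polynomials, $\mathbf h$-vectors and $\mathbf g$-vectors in \Cref{prop: recurrence under reflection uniform ver} match term-by-term with the cluster-algebraic recurrences in \Cref{prop: recurrence g vector f poly dwz}; comparing them inductively (and using that $F_{M_\beta} \in \mathbb Q_\mathrm{sf}$ at each stage, so that $\mathbf h(M_\beta)$ is defined) yields $F_{\ell; t}^{B; t_0} = F_{M_\beta}$ and $\mathbf g_{\ell; t}^{B; t_0} = \mathbf g^{\mathrm{inj}}(M_\beta)$, where $x_{\ell;t}$ is the cluster variable with $\mathbf d$-vector $\beta$ (its $\mathbf d$-vector being forced by the minimal-denominator remark after \Cref{def: locally free cc function} together with \Cref{prop: real schur roots affine type}).

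The main obstacle I anticipate is the base case rather than the induction: one must argue cleanly that a cluster variable of $\mathcal A_\bullet(B)$ whose $\mathbf d$-vector is a finite-type root of the subsystem $\Phi_\mathrm{fin}$ actually ``lives in'' the rank-$(n-1)$ subcluster algebra, so that the finite-type CC formula of \cite{MR3830892} computes it correctly inside the bigger seed — in particular that no mutation at the frozen vertex $k$ is needed to reach it, and that the extra $\mathbf g$-vector / $F$-polynomial coordinates behave as dictated by \Cref{lemma: g vector from rank vector}. Once the base case is pinned down, combining \Cref{prop: f poly g vec preprojective}, \Cref{prop: f poly g vec preinjective} and \Cref{prop: f poly g vec finite c orb} with \Cref{prop: real schur roots affine type} and the separation formula \Cref{thm: separation formula} gives \Cref{thm: first main theorem intro} for all real Schur roots in affine type.
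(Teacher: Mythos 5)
Your propagation step is essentially the paper's own argument: start from a root of the finite $c$-orbit lying in $\Phi_\mathrm{fin}^+$, apply the source reflections $F_n^-,\dots,F_1^-$ (each satisfying $(\star)$, since otherwise the composite would annihilate the module while $c\beta$ is again a positive root with $M(c\beta)\neq 0$), and compare the recurrence of \Cref{prop: recurrence under reflection uniform ver} with \Cref{prop: recurrence g vector f poly dwz} while moving the base vertex along the sink sequence; iterating around the tube and using that every finite $c$-orbit meets $\Phi_\mathrm{fin}$ then covers all such $\beta$. That part is fine and coincides with the paper.

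The gap is exactly where you anticipated it: the base case. You propose to get $F(x_\beta)=F_{M(\beta)}$ and $\mathbf g(x_\beta)=\mathbf g^\mathrm{inj}(M(\beta))$ for $\beta\in\Phi_\mathrm{fin}^+\cap U^c$ from the finite-type result of \cite{MR3830892} by freezing the extended vertex $k$ and asserting the $F$-polynomial and $\mathbf g$-vector are ``unchanged'' because $\beta$ has no $\alpha_k$-component, but this is precisely what needs proof. Two points are open. First, one must show that the cluster variable of $\mathcal A_\bullet(B)$ with $\mathbf d$-vector $\beta$ is obtained by mutations avoiding $k$ and that its $F$-polynomial in the full principal coefficients $y_1,\dots,y_n$ equals the finite-type $F$-polynomial; this requires a separation-formula argument for the coefficient system in which $x_k$ is frozen ($y_k$ indeed never enters, but $x_k$ does, and one must check the tropical denominator is a pure power of $x_k$). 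Second, and more substantially, the $k$-th coordinate of $\mathbf g(x_\beta)$ does not come from \Cref{lemma: g vector from rank vector} for free: one has to prove $g_k=\sum_j[-b_{kj}]_+v_j$, which amounts to identifying that tropical monomial, i.e.\ an extremal locally free submodule computation absent from your sketch. The paper closes the base case by a different tool: Rupel's (quantum) cluster character for valued quivers \cite{MR2817677, MR3378824} (see also \cite{MR4057626}) applied directly in the affine rank-$n$ cluster algebra --- a rigid representation of $Q_\mathrm{fin}$ is rigid as a representation of $Q$, so Rupel's formula gives at once $F(x_\beta)=F_\beta=F^\mathrm{fin}_{M(\beta)}=F_{M(\beta)}$ and, from the explicit shape of the character, $\mathbf g(x_\beta)=(-v_i+\sum_j[-b_{ij}]_+v_j)_i=\mathbf g^\mathrm{inj}(M(\beta))$ by \Cref{lemma: g vector from rank vector}. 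Either import that result as the paper does, or supply the missing freezing/tropical computation; as written, the base case is a genuine gap.
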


\begin{proof}
    The crucial point here is that in any finite $c$-orbit there always exists at least one root in the finite root system $\Phi_\mathrm{fin}$, as already discussed in \Cref{section: coxeter action}. As in \Cref{subsection: coxeter orbit}, we choose an extended vertex $k$ and let $I_\mathrm{fin} = I \setminus \{k\}$. Consider the $I_\mathrm{fin} \times I_\mathrm{fin}$ submatrix $C_\mathrm{fin}$ of $C$, with the induced symmetrizer $D_\mathrm{fin}$ and orientation $\Omega_\mathrm{fin}$. Then there is the associated GLS algebra $H_\mathrm{fin} = H(C_\mathrm{fin}, D_\mathrm{fin}, \Omega_\mathrm{fin})$, naturally a subalgebra of $H$. Denote $B_\mathrm{fin} = B(C_\mathrm{fin}, \Omega_\mathrm{fin})$.

    Suppose that $\beta$ is a positive root in $\Phi_\mathrm{fin} \cap U^c$. Then there is a unique rigid locally free $H_\mathrm{fin}$-module $M(\beta)$ such that $\rank M(\beta) = \beta$ by \cite{MR3660306}. Note that $M(\beta)$ can be viewed as an $H$-module, that is still locally free and rigid. Our first step is to show that the proposition holds for such $\beta$.

    Consider the finite type cluster algebra $\mathcal A_\bullet (B_\mathrm{fin})$. Denote by $X_\beta^\mathrm{fin}$ the unique cluster variable whose $\mathbf d$-vector is $\beta$. Notice that we have
    \begin{equation}\label{eq: f-poly finite}
        F_{M(\beta)}^\mathrm{fin}(y_i\mid i\in I_\mathrm{fin}) = F({X_\beta^\mathrm{fin}})(y_i\mid i\in I_\mathrm{fin}).
    \end{equation}
    This is a theorem of Geiss--Leclerc--Schr\"oer \cite{MR3830892}. Alternatively it can be proven in the same style as \Cref{lemma: f poly g vec projective} and \Cref{prop: f poly g vec preprojective}.

    Rupel proved in \cite{MR2817677} a module theoretic formula $F(X_\beta^\mathrm{fin}) = F_{\beta}$; see \cite[Corollary 3]{MR4057626}. Here we denote his module theoretic polynomial by $F_{\beta}$. Then by (\ref{eq: f-poly finite}), we have $F_{\beta} = F_{M(\beta)}^\mathrm{fin}$. Rupel's formula was extended beyond finite types in \cite{MR3378824}. In particular, let $Q$ be the affine type valued quiver associated to $B$ on the vertex set $I$ containing $Q_\mathrm{fin}$ on $I_\mathrm{fin}$ as a subquiver. Manifestly from his methods, since a rigid representation $V$ of the valued quiver $Q_\mathrm{fin}$ over a finite field $\mathbb F$ can be viewed as a rigid representation of $Q$, the polynomial $F_\beta$ also serves as the $F$-polynomial of $X_\beta$ in $\mathcal A_\bullet(B)$. Notice that in our setup $F_{M(\beta)} = F_{M(\beta)}^\mathrm{fin}$ by definition, hence $F_{M(\beta)} = F(X_\beta)$.

    The cluster character formula in \cite{MR3378824} (evaluating the quantum parameter $q$ at 1) can be written as
    \[
        X_\beta = \prod_{i\in I} x_i^{-v_i + \sum_{j\in I}[-b_{ij}]v_j} F_{\beta}(\hat y\mid i\in I),
    \]
    where $(v_i)_{i\in I} = \beta$. Thus 
    \[
        \mathbf g(X_\beta) = \deg X_\beta = (-v_i + \sum_{j\in I}[-b_{ij}]v_j)_{i\in I},
    \]
    which by \Cref{lemma: g vector from rank vector} equals $\mathbf g^\mathrm{inj}(M(\beta))$.

    Now we consider the real Schur root $c\beta$. Suppose that $X_\beta$ is realized as a cluster variable $x_{\ell; t}^{B; t_0}$ at some $t\in \mathbb T_n$. Consider the automorphism of (edge-labeled) $\mathbb T_n$ induced by sending $t_0$ to $t_n$ in (\ref{eq: sink sequence}). Suppose that this automorphism sends $t$ to $t'$. We have proven that
    \[
        F_{\ell; t'}^{B; t_n} = F_{M(\beta)} \quad \text{and} \quad \mathbf g_{\ell; t'}^{B; t_n} = \mathbf g^\mathrm{inj}(M(\beta)).
    \]
    Apply the sequence of source reflections $F_n^-$, $F_{n-1}^-$, \dots, $F_1^-$ (from left to right) to $M(\beta)$. Along the way, each reflection satisfies the assumptions in \Cref{prop: recurrence under reflection uniform ver}. In particular, $M(c\beta) = \tau^{-1} M(\beta)$. This sequence of reflections moves the base vertex from $t_n$ to $t_0$. Therefore, as in the infinite $c$-orbit case, by comparing with the same recurrence in \Cref{prop: recurrence g vector f poly dwz}, we obtain
    \[
        F(X_{c\beta}) = F_{\ell; t'}^{B; t_0} = F_{M(c\beta)} \quad \text{and} \quad \mathbf g(X_{c\beta}) = \mathbf g_{\ell; t'}^{B; t_0} = \mathbf g^\mathrm{inj}(M(c\beta)).
    \]

    Repeating the above process proves the result for all roots in the $c$-orbit of $\beta$. Since there is always one root in $\Phi_\mathrm{fin}$ of a finite $c$-orbit of real Schur roots, the proof is complete.
\end{proof}

Finally we summarize the main result in this section.

\begin{theorem}\label{thm: main bijection cc function}
    Let $C$ be of affine type. Sending a locally free $H_\mathbb C(C, D, \Omega)$-module $M$ to its Caldero--Chapoton function $X_M^\bullet$ induces a bijection from indecomposable rigid locally free modules (up to isomorphism) to non-initial cluster variables
    \[
        \left\{ M(\beta)\in \rep H \mid \beta \in \Delta_{\mathrm{rS}} \right\} \quad \xlongrightarrow{\sim} \quad
        \left\{ X_\beta \in \mathcal A_\bullet(B) \mid \beta \in \Delta_{\mathrm{rS}} \right\}.
    \]
\end{theorem}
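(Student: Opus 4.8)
The plan is to assemble \Cref{thm: main bijection cc function} directly from the case-by-case results already established in this section, using the classification of real Schur roots in affine type from \Cref{prop: real schur roots affine type}. First I would recall that \Cref{thm: real schur roots rigid module} gives a bijection $\beta \mapsto M(\beta)$ between $\Delta_\mathrm{rS}(C,\Omega)$ and isomorphism classes of indecomposable rigid locally free $H$-modules, and \Cref{thm: d vectors real schur roots} gives a bijection $\beta \mapsto x_\beta$ between $\Delta_\mathrm{rS}(C,\Omega)$ and non-initial cluster variables in $\mathcal A_\bullet(B)$. Composing these, the assignment $M(\beta) \mapsto x_\beta$ is automatically a bijection between the two sets in the statement; what remains is to verify that this bijection is realized by the Caldero--Chapoton map $M \mapsto X_M^\bullet$, i.e.\ that $X_{M(\beta)}^\bullet = x_\beta$ for every $\beta \in \Delta_\mathrm{rS}$.

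The verification of $X_{M(\beta)}^\bullet = x_\beta$ splits along the three types of $c$-orbits in \Cref{prop: real schur roots affine type}. For $\beta = c^r\beta_\ell$ in an infinite orbit through a $\beta_\ell$, \Cref{prop: f poly g vec preprojective} shows that $F_{\tau^{-r}P_\ell(t_0)} = F_{\ell; t_{(r+1)n}}^{B;t_0}$ and $\mathbf g^\mathrm{inj}(\tau^{-r}P_\ell(t_0)) = \mathbf g_{\ell; t_{(r+1)n}}^{B;t_0}$, while $\rank \tau^{-r}P_\ell(t_0) = \beta$ identifies $\tau^{-r}P_\ell(t_0)$ with $M(\beta)$ via \Cref{thm: real schur roots rigid module}; since a cluster variable in $\mathcal A_\bullet(B)$ is determined by its $F$-polynomial and $\mathbf g$-vector (by the theorems of \cite{MR2295199} recalled in \Cref{subsec: g vector f poly}), and likewise $X_M^\bullet = F_M(\hat y_1,\dots,\hat y_n)\prod x_i^{g_i}$ with $\mathbf g^\mathrm{inj}(M) = (g_i)$, we conclude $X_{M(\beta)}^\bullet = x_\beta$. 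The dual computation for $\beta = c^{-r}\gamma_\ell$ uses \Cref{prop: f poly g vec preinjective} in exactly the same way. For $\beta$ with finite $c$-orbit, \Cref{prop: f poly g vec finite c orb} gives $F(x_\beta) = F_{M_\beta}$ and $\mathbf g(x_\beta) = \mathbf g^\mathrm{inj}(M(\beta))$, hence again $X_{M(\beta)}^\bullet = x_\beta$.

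Putting the three cases together, $M \mapsto X_M^\bullet$ sends each indecomposable rigid locally free module $M(\beta)$ to the cluster variable $x_\beta$ with $\mathbf d$-vector $\beta$; since $\beta \mapsto M(\beta)$ and $\beta \mapsto x_\beta$ are both bijections onto their respective sets and the $\mathbf d$-vector of $X_{M(\beta)}^\bullet$ is $\beta$ (as noted after \Cref{def: locally free cc function}), injectivity and surjectivity of the induced map are immediate, proving the theorem. I expect the only real content to be the three propositions already proved; the main subtlety worth a sentence is to confirm that $X_M^\bullet$ and $x_\beta$ really are pinned down by the pair $(F\text{-polynomial}, \mathbf g\text{-vector})$ in the principal-coefficient setting, so that equality of $F$-polynomials and injective $\mathbf g$-vectors upgrades to equality of the Laurent polynomials themselves. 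No genuinely new obstacle arises at the level of this theorem; it is a bookkeeping assembly of the preceding results together with the two parametrization bijections.
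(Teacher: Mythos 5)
Your proposal is correct and follows essentially the same route as the paper's own proof: split $\Delta_{\mathrm{rS}}$ into the preprojective, preinjective and finite $c$-orbit cases, invoke \Cref{prop: f poly g vec preprojective}, \Cref{prop: f poly g vec preinjective} and \Cref{prop: f poly g vec finite c orb} to match $F$-polynomials and injective $\mathbf g$-vectors with those of cluster variables, and then use the $\mathbf d$-vector parametrization of \Cref{thm: d vectors real schur roots} to conclude the bijection. Your extra remark that the pair ($F$-polynomial, $\mathbf g$-vector) pins down both $X_M^\bullet$ and $x_\beta$ via the Fomin--Zelevinsky formula $x_{k;t}=F_{k;t}(\hat y_1,\dots,\hat y_n)\,x^{\mathbf g_{k;t}}$ is exactly the implicit step in the paper, so nothing is missing.
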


\begin{proof}
    The modules in the statement are parametrized by their rank vectors as real Schur roots. Thus they are either of type $\tau^{-r} P_\ell$, or $\tau^r \inj_\ell$ with $r\in \mathbb Z_{\geq 0}$, or in a finite $\tau$-orbit. 
    
    We have shown in \Cref{prop: f poly g vec preprojective}, \Cref{prop: f poly g vec preinjective} and \Cref{prop: f poly g vec finite c orb} that for a module in any of these three cases, its $F$-polynomial and injective $\mathbf g$-vector equal the $F$-polynomial and $\mathbf g$-vector of some cluster variable. It is clear from the CC formula $X_M^\bullet$ that the cluster variable $X^\bullet_{M(\beta)}$ has $\mathbf d$-vector $\beta$ as the rank vector of the module $M(\beta)$. By \Cref{thm: d vectors real schur roots}, these cluster variables $X^\bullet_{M(\beta)}$ for $\beta\in \Delta_{\mathrm{rS}}$ are all the non-initial cluster variables in $\mathcal A_\bullet(B)$. This concludes the bijection.
\end{proof}

\begin{remark}
    In Rupel's formula on $F$-polynomials, the coefficients are evaluations (at $q = 1$) of certain polynomials counting submodules in rigid representations over finite fields. The existence of these counting polynomials is a result from that the corresponding CC functions for any finite field $\mathbb F$ are indeed evaluations of quantum cluster variables at $q = |\mathbb F|$. Our formula on the other hand utilizes Euler charactersitics of (locally free) quiver Grassmannians over complex numbers, thus providing a direct and explicit expression for cluster variables. The seemingly different formulae $F_{M(\beta)}$ and $F_\beta$ from the definitions are actually equal.
\end{remark}

\begin{remark}
    Evaluating every $y_i$ at $1$, one obtains the corresponding result for the coefficient-free cluster algebra $\mathcal A(B)$ as in the form of \Cref{thm: first main theorem intro}. In this case the Laurent polynomials $X_{M(\beta)}$ are still parametrized by their $\mathbf d$-vectors.
\end{remark}

\begin{remark}\label{rmk: cc function of cluster monomial}
    According to \cite{MR4125687}, cluster monomials are in bijection with pairs $(M, \inj)$ such that $M\in \replf H$ is rigid and $\inj$ is injective with $\Hom_H(M, \inj) = 0$. These are $\tau^-$-rigid pairs of Adachi--Iyama--Reiten \cite{MR3187626}. We set $(0, \inj_i)$ to correspond to the initial cluster variable $x_i$. The indecomposable summands of such $(M, \inj)$, which are either $(N, 0)$ with $N$ being indecomposable or $(0, \inj_i)$, correspond to cluster variables belonging to one cluster. Writing $M = \bigoplus_{s\in S} M_s^{b_s}$ where $M_s$ is indecomposable and $\inj = \bigoplus_{i=1}^n \inj_i^{a_i}$, we can define
    \[
        X^\bullet_{(M, \inj)} \coloneqq X^\bullet_M \prod_{i=1}^n x_i^{a_i} \quad \overset{\text{\Cref{rmk: cc function direct sum}}}{=\joinrel=} \quad \prod_{s\in S} (X^\bullet_{M_s})^{b_s}\prod_{i=1}^n x_i^{a_i},
    \]
    which provides a formula of the corresponding cluster monomial.
\end{remark}

\section{Generic bases}\label{section: generic bases}

\subsection{Affine spaces of locally free modules}\label{subsec: affine spaces loc free}

With fixed $\mathbf r\in \mathbb N^n$, consider the free $H_i$-modules $V_i = H_i^{\oplus r_i}$ for $i\in I$. Define
\[
    \replf(H, \mathbf r) \coloneqq \bigoplus_{(i,j)\in \Omega} \Hom_{H_i}({_iH_j}\otimes_{H_j}V_j, V_i) = \prod_{(i,j)\in \Omega} \mathrm{Mat}_{r_i\times b_{ij}r_j}(H_i).
\]
It is an affine space over the ground field $K$ and can be viewed as the space of rank $\mathbf r$ locally free representations of the modulated graph $({_iH_j}, H_i)$. The (algebraic) group
\[
    \mathrm{GL}_\mathbf{r}(H) \coloneqq \prod_{i\in I} \mathrm{GL}_{r_i}(H_i) = \prod_{i\in I} \mathrm{Aut}_{H_i}(V_i)
\]
acts on $\replf(H, \mathbf r)$ by
\[
    (g_i)_{i\in I} \cdot (M_{ij})_{(i,j)\in \Omega} = \left( g_i\circ M_{ij}\circ (\mathrm{id}_{{_iH_j}}\otimes g_j^{-1}) \right)_{(i,j) \in \Omega}.
\]
The orbits are naturally in bijection with isomorphism classes of representations. We note that $\replf(H, \mathbf r)$ is denoted differently as $\mathrm{rep}^\mathrm{fib}_\mathrm{l.f.}(H, \mathbf f)$ in \cite[Section 3.1]{MR3801499}.

As discussed earlier in \Cref{subsec: reps of H}, the space $\Hom_{H_i}({_iH_j}\otimes_{H_j}V_j, V_i)$ can be ($\mathrm{GL}_\mathbf{r}(H)$-equivariently) identified with
\[
    \Hom_{H_j}(V_j, {_jH_i}\otimes_{H_i} V_i) = \mathrm{Mat}_{r_j\times (-b_{ji}r_i)}(H_j).
\]

Recall that locally free rigid $H$-modules are parametrized by their rank vectors \Cref{section: reps of h}. If $\mathbf r$ is such a rank vector, by standard arguments there is a (unique) open $\mathrm{GL}_\mathbf r(H)$-orbit in $\replf(H, \mathbf r)$ of rigid modules; see for example the discussion in \cite[Section 3.4]{pfeifer2023generic}.

\subsection{\texorpdfstring{Generic $F$-polynomials}{Generic F-polynomials}}\label{subsec: generic f poly}

Let $\gr^{H_i}_e(r)$ denote the quasi-projective variety of free $H_i$-submodules of rank $e$ in $H_i^{\oplus r}$. Recall $S = \prod_{i\in I} H_i$ from \Cref{subsec: loc free module}. For rank vectors $\mathbf e = (e_i)_{i\in I}\in \mathbb N^n$ and $\mathbf r = (r_i)_{i\in I}\in \mathbb N^n$, we define
\[
    \gr^{S}_\mathbf e(\mathbf r) = \prod_{i\in I} \gr^{H_i}_{e_i}(r_i).
\]
It can be viewed as the set of locally free $S$-submodules of rank vector $\mathbf e$ in $\bigoplus_{i\in I}H_i^{r_i}$.

Next we define a space over $\replf(H, \mathbf r)$ whose fiber over a point $M\in \replf(H, \mathbf r)$ is $\gr^H_\mathbf e(M)$. Consider the \emph{incidence} space
\begin{equation}
    \gr^H_\mathbf e (\mathbf r) \coloneqq \{(N, M)\in \gr^S_\mathbf e(\mathbf r) \times \replf(H, \mathbf r) \mid M_{ij}({_iH_j}\otimes_{H_j}N_j) \subseteq N_i \ \text{for}\ (i,j)\in \Omega \}.
\end{equation}
It comes with two forgetful projections 
\[
    \begin{tikzcd}
         & \gr^H_\mathbf e(\mathbf r) \ar[ld, swap, "p_1"] \ar[rd, "p_2"] & \\
        \gr_\mathbf e^S(\mathbf r) & & \replf(H, \mathbf r).
    \end{tikzcd}
\]
The map $p_1$ is clearly a vector bundle. The fiber of $p_2$ at $M\in \replf(H, \mathbf r)$ is just $\gr_\mathbf e^H(M)$.

\begin{lemma}\label{lemma: constructible cc map}
    For any rank vector $\mathbf e$, there is a non-empty dense open subset $U_{\mathbf e}$ of $\replf(H, \mathbf r)$ such that the map
    \[
        f\colon M \mapsto \chi(\gr^H_\mathbf e(M))
    \]
    is constant on $U_{\mathbf e}$.
\end{lemma}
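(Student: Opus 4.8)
The plan is to exploit the two projections $p_1$ and $p_2$ from the incidence variety $\gr^H_{\mathbf e}(\mathbf r)$ together with general constructibility properties of Euler characteristics in families. The key observation is that $p_1\colon \gr^H_{\mathbf e}(\mathbf r)\to \gr^S_{\mathbf e}(\mathbf r)$ is a vector bundle: the fiber over a locally free $S$-submodule $N\subseteq \bigoplus_i H_i^{r_i}$ is the linear subspace of $\replf(H,\mathbf r)$ consisting of those structure maps $(M_{ij})$ that preserve $N$, which is cut out by linear equations. Hence $\gr^H_{\mathbf e}(\mathbf r)$ is an irreducible (in fact smooth) variety, since $\gr^S_{\mathbf e}(\mathbf r)=\prod_i \gr^{H_i}_{e_i}(r_i)$ is a product of (smooth, irreducible) Grassmannians of free submodules.

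First I would recall the general fact (a form of the theorem on generic behaviour of Euler characteristics of fibers, cf.\ the constructibility of $x\mapsto \chi(f^{-1}(x))$ for a morphism $f$ of complex varieties): for any morphism $p\colon Y\to Z$ of complex algebraic varieties, the function $z\mapsto \chi(p^{-1}(z))$ is constructible on $Z$, so $Z$ admits a finite stratification into locally closed subsets on which this function is constant. Apply this to $p_2\colon \gr^H_{\mathbf e}(\mathbf r)\to \replf(H,\mathbf r)$, whose fiber over $M$ is exactly $\gr^H_{\mathbf e}(M)$ by construction. This yields a constructible stratification of $\replf(H,\mathbf r)$ on which $f\colon M\mapsto \chi(\gr^H_{\mathbf e}(M))$ is constant. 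Since $\replf(H,\mathbf r)$ is an irreducible affine space, it is not a finite union of proper closed subsets; therefore one of the strata is dense, and being constructible it contains a non-empty dense open subset $U_{\mathbf e}$. On $U_{\mathbf e}$ the function $f$ is constant, which is the claim.

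The main obstacle — and the only real content beyond formal nonsense — is justifying that $M\mapsto \chi(\gr^H_{\mathbf e}(M))$ is a constructible function on $\replf(H,\mathbf r)$. This requires knowing that $\gr^H_{\mathbf e}(M)$ really is an algebraic variety (so that its Euler characteristic is defined and behaves well in families), which in turn needs the incidence variety $\gr^H_{\mathbf e}(\mathbf r)$ to be honestly algebraic: the condition $M_{ij}({_iH_j}\otimes_{H_j}N_j)\subseteq N_i$ is a closed algebraic condition on the pair $(N,M)$ inside $\gr^S_{\mathbf e}(\mathbf r)\times \replf(H,\mathbf r)$, so $\gr^H_{\mathbf e}(\mathbf r)$ is a closed subvariety of that product; this is where the explicit matrix description of $\replf(H,\mathbf r)$ and of $\gr^{H_i}_{e_i}(r_i)$ from Sections~\ref{subsec: affine spaces loc free} and \ref{subsec: generic f poly} is used. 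Granting this, the constructibility of fiberwise Euler characteristic is a standard consequence of generic triviality of morphisms (stratify $p_2$ into pieces over which it is a Zariski-locally-trivial fibration, using Noetherian induction) combined with additivity and multiplicativity of $\chi$ under stratifications and fiber bundles. No irreducibility of the fibers is needed — only irreducibility of the base $\replf(H,\mathbf r)$, which is immediate since it is an affine space.
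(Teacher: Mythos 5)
Your argument is correct and follows essentially the same route as the paper: view $f$ as the fiberwise Euler characteristic of $p_2\colon \gr^H_{\mathbf e}(\mathbf r)\to \replf(H,\mathbf r)$, invoke constructibility of such functions (the paper cites MacPherson for this, where you sketch the standard stratification argument), and use irreducibility of the affine space $\replf(H,\mathbf r)$ to extract a dense open stratum. The extra remarks on $p_1$ being a vector bundle and on smoothness are harmless but not needed for the conclusion.
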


\begin{proof}
    Consider the map (between complex algebraic varieties)
    \[
        p_2 \colon \gr_\mathbf e^H(\mathbf r) \rightarrow \replf(H, \mathbf r).
    \]
    Then the function $f$ is the pushforward of the constant function on $\gr_\mathbf e^H(\mathbf r)$, that is,
    \[
        f(M) = \chi(p_2^{-1}(M)), \quad M\in \replf(H, \mathbf r).
    \]
    It follows from a general theory of morphisms between algebraic varieties \cite{MR0361141} that $f$ is constructible on $\replf(H, \mathbf r)$. Then there exists a finite decomposition of $\replf(H, \mathbf r)$ into constructible sets each with constant function value. Since $\replf(H, \mathbf r)$ is an affine space (thus irreducible), there is an open subset $U_{\mathbf e}$ in the decomposition which is dense.
\end{proof}

The above lemma ensures that there is a generic $F$-polynomial defined for each $\replf(H, \mathbf r)$.

\begin{corollary}\label{cor: open subset f poly}
    There is a non-empty open subset $U \subset \replf(H, \mathbf r)$ such that the assignment
    \[
        M \mapsto F_M, \quad M\in \replf(H, \mathbf r)
    \]
    is constant on $U$.
\end{corollary}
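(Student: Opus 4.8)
The plan is to deduce \Cref{cor: open subset f poly} directly from \Cref{lemma: constructible cc map} by a finite intersection argument. The $F$-polynomial $F_M$ depends on $M \in \replf(H, \mathbf r)$ only through the finitely many Euler characteristics $\chi(\gr^H_\mathbf e(M))$ where $\mathbf e$ ranges over rank vectors $\mathbf e \leq \mathbf r$ (componentwise), since $\gr^H_\mathbf e(M)$ is empty whenever some $e_i > r_i$. So the first step is to note that the set of relevant rank vectors $\{\mathbf e \in \mathbb N^n \mid \mathbf e \leq \mathbf r\}$ is finite.

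Next, for each such $\mathbf e$, \Cref{lemma: constructible cc map} provides a non-empty dense open subset $U_\mathbf e \subseteq \replf(H, \mathbf r)$ on which $M \mapsto \chi(\gr^H_\mathbf e(M))$ is constant. I would then set $U \coloneqq \bigcap_{\mathbf e \leq \mathbf r} U_\mathbf e$. Since this is a finite intersection of non-empty open subsets of the irreducible variety $\replf(H, \mathbf r)$ (an affine space), $U$ is again non-empty and open (in fact dense). On $U$, every coefficient $\chi(\gr^H_\mathbf e(M))$ of $F_M = \sum_{\mathbf e} \chi(\gr^H_\mathbf e(M)) \prod_i y_i^{e_i}$ is constant, hence $M \mapsto F_M$ is constant on $U$.

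There is no real obstacle here; the statement is a formal corollary. The only point requiring a moment's care is the observation that $F_M$ is a \emph{finite} sum whose support is controlled uniformly in $M \in \replf(H, \mathbf r)$ — namely by the single bound $\mathbf e \leq \mathbf r$ — so that one is intersecting only finitely many dense open sets and irreducibility of the ambient affine space guarantees the intersection stays non-empty. This value of $F_M$ on $U$ is what will be called the \emph{generic $F$-polynomial} on $\replf(H, \mathbf r)$, and by the same reasoning (incorporating the fixed rank vector $\mathbf r$ and the injective $\mathbf g$-vector, which is determined by $\mathbf r$ via \Cref{lemma: g vector from rank vector}) one obtains a generic Caldero--Chapoton function on $\replf(H, \mathbf r)$ as well.
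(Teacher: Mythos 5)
Your proof is correct and follows essentially the same argument as the paper: intersect the finitely many dense open sets $U_{\mathbf e}$ for $\mathbf e \leq \mathbf r$ from \Cref{lemma: constructible cc map}, using irreducibility of the affine space $\replf(H, \mathbf r)$ to conclude the intersection is non-empty and open, on which all coefficients of $F_M$ (and hence $F_M$ itself) are constant. The observation that only $\mathbf e \leq \mathbf r$ contribute, so the intersection is finite, is the same point the paper's proof relies on implicitly.
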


\begin{proof}
    Let $U$ be the finite intersection $\bigcap_{\mathbf e \leq \mathbf r} U_{\mathbf e}$. It is (non-empty) open in $\replf(H, \mathbf r)$ because $\replf(H, \mathbf r)$ is irreducible. Then for any $\mathbf e$, the map $M \mapsto \chi(\gr^H_\mathbf e(M))$ is constant on $U$ by \Cref{lemma: constructible cc map}, thus so is the map $M \mapsto F_M$.
\end{proof}

\begin{definition}\label{def: generic f poly cc function}
    For a rank vector $\mathbf r\in \mathbb N^n$, we define the \emph{generic $F$-polynomial} $F^H_{\mathbf r} = F_{\mathbf r}$ to be $F_M$ for (any) $M\in U\subset \replf(H, \mathbf r)$ (as in \Cref{cor: open subset f poly}).
\end{definition}

\subsection{Decorations}\label{subsec: decoration}

For the purpose of relating $H$-modules with cluster algebras, following \cite{MR2480710} we introduce a convenient notion called \emph{decorated representations}.

For any $\mathbf v = (v_i)_{i\in I}\in \mathbb Z^n$, let
\[
    \mathbf v^+ = (v^+_i)_i \coloneqq ([v_i]_+)_i \in \mathbb N^n \quad \text{and} \quad \mathbf v^- = (v^-_i)_i \coloneqq ([-v_i]_+)_i \in \mathbb N^n.
\]
The two vectors $\mathbf v^+$ and $\mathbf v^-$ have disjoint supports, i.e., $v^+_iv^-_i = 0$ for any $i$. We define the notation
\[
    \dreplf(H, \mathbf v) \coloneqq (\replf(H, \mathbf v^+), \mathbf v^-)
\]
where $\mathbf v^-$ is viewed as a \emph{decoration}. A point $\mathcal M = (M, \mathbf v^-)$ in $\dreplf(H, \mathbf v)$ can be regarded as a \emph{decorated representation} with $M\in \replf H$ of rank $\mathbf v^+$ and decoration $\mathbf v^-$. Operations of representations such as direct sum naturally extend to decorated representations.

In this paper, we always want the decoration $\mathbf v^-$ to be disjoint from the support of $M$, which is less general than \cite{MR2480710}. The definition of generic $F$-polynomial is extended to any $\mathbf v\in \mathbb Z^n$ by setting $F^H_{\mathbf v} = F_{\mathbf v} \coloneqq F_{\mathbf v^+}$.

\subsection{\texorpdfstring{Reflections of generic $F$-polynomials}{Reflections of generic F-polynomials}}

Let $k\in I$ be a sink of $\Omega$. Write $\mathbf v = \mathbf v^+ - \mathbf v^-$ as in \Cref{subsec: decoration}. Define the vector $\mathbf v' = (v'_i)_i \in \mathbb Z^n$ by
\begin{equation}\label{eq: reflection decorated rank vector}
    v_i' \coloneqq \begin{dcases}
        v_i \quad & \text{if $i\neq k$} \\
        -v_k + \sum_{j = 1}^n [b_{kj}]_+v^+_j \quad & \text{if $i = k$}.
    \end{dcases}
\end{equation}
Sending $\mathbf v$ to $\mathbf v'$ defines a bijection from $\mathbb Z^n$ to $\mathbb Z^n$. We view $\mathbf v'$ as a (decorated) rank vector for the algebra $H' = H(C, D, s_k(\Omega))$.

\begin{proposition}\label{prop: reflection generic f poly}
    Let $k$ be a sink. For any $\mathbf v\in \mathbb Z^n$ and $\mathbf v'$ as defined in (\ref{eq: reflection decorated rank vector}), the generic $F$-polynomials satisfy the equation
    \begin{equation}\label{eq: reflection generic value}
        (1 + y_k)^{-v_k^+}F^H_{\mathbf v}(y_1, \dots, y_n) = (1 + y_k')^{-v_k^-} F^{H'}_{\mathbf v'}(y_1',\dots, y_n'),
    \end{equation}
    where $y_i' = y_iy_k^{[b_{ki}]_+}(y_k+1)^{-b_{ki}}$ for $i\neq k$ and $y_k' = y_k^{-1}$.
\end{proposition}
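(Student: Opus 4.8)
The plan is to reduce the statement about \emph{generic} $F$-polynomials to the module-by-module reflection recurrence already established in \Cref{prop: recurrence under reflection uniform ver}, using the incidence-variety machinery of \Cref{subsec: affine spaces loc free} and the constructibility argument behind \Cref{cor: open subset f poly}. First I would fix $\mathbf v\in\mathbb Z^n$ with $k$ a sink of $\Omega$, set $\mathbf r=\mathbf v^+$ and $\mathbf r'=(\mathbf v')^+$, and observe that the reflection functor $F_k^+$ restricts to a map on the locus of $\replf(H,\mathbf r)$ where $M_{k,\mathrm{in}}$ is surjective; by \Cref{prop: recurrence under reflection uniform ver}(1) this produces a locally free $s_k(H)$-module of rank $s_{\alpha_k}(\mathbf r)$. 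The key point is that this locus, call it $U^\star_{\mathbf r}\subseteq\replf(H,\mathbf r)$, is nonempty and open (one builds an explicit point, e.g. a generic module, whose $k$-th incoming map is surjective, so the condition is open and nonempty, hence dense as $\replf(H,\mathbf r)$ is irreducible), and similarly $U^\star_{\mathbf r'}$ on the $H'$-side; moreover $F_k^+$ and $F_k^-$ are mutually inverse bijections between these two loci up to the pseudo-simple summands $E_k^{\oplus\bullet}$, which account precisely for the discrepancy between $\mathbf r$ and $s_{\alpha_k}(\mathbf r)$ recorded by the $[b_{kj}]_+v_j^+$ term in (\ref{eq: reflection decorated rank vector}).

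Next I would reconcile the \emph{decorated} bookkeeping. Since $v_k^+v_k^-=0$, at most one of the exponents $-v_k^+$, $-v_k^-$ in (\ref{eq: reflection generic value}) is nonzero; and because $k$ is a sink, one has $h_k(M)=-m_k=-v_k^+$ for any $M$ of rank $\mathbf r$ by \Cref{lemma: h vector}, while on the $H'$-side $k$ is a source and $F_k^-$ is injective on $U^\star_{\mathbf r'}$, giving $h_k(M')=0$ when $v_k^+\neq0$; in the remaining case $v_k^+=0$ the module side is supported away from $k$, the decoration carries $v_k^-$, and the roles of $H$ and $H'$ swap. In either case the module-level identity (\ref{eq: recurrence f poly uni ver}) of \Cref{prop: recurrence under reflection uniform ver}(2) reads exactly as (\ref{eq: reflection generic value}) once the generic values are substituted, with the substitution $y_i'=y_iy_k^{[b_{ki}]_+}(y_k+1)^{-b_{ki}}$, $y_k'=y_k^{-1}$ matching verbatim; the generic $F$-polynomial on a component with $v_k^+=0$ is just $F_{\mathbf v^+}$ with $F_{E_k^{\oplus\cdot}}=1$ (\Cref{rmk: cc function direct sum}), so the extra summands contribute trivially.

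The genuine content is then the genericity transfer: I need that a point $M$ lying in the dense open locus $U\subseteq\replf(H,\mathbf r)$ of \Cref{cor: open subset f poly} (where $M\mapsto F_M$ is constant) can be chosen to also lie in $U^\star_{\mathbf r}$ \emph{and} to map under $F_k^+$ into the corresponding dense open locus $U'\subseteq\replf(s_k(H),\mathbf r')$ governing $F^{H'}_{\mathbf v'}$. This is where I expect the main obstacle, and I would handle it by the standard argument that $F_k^+$ (on $U^\star_{\mathbf r}$) is a morphism — indeed, after choosing bases, an algebraic map between the incidence/quotient data — with dense image into $U^\star_{\mathbf r'}$, so the preimage $(F_k^+)^{-1}(U')\cap U^\star_{\mathbf r}$ is nonempty open; intersecting the three dense opens $U$, $U^\star_{\mathbf r}$, $(F_k^+)^{-1}(U')$ inside the irreducible variety $\replf(H,\mathbf r)$ yields a nonempty locus, any point of which simultaneously computes $F^H_{\mathbf v}$, satisfies $(\star)$, and has reflection computing $F^{H'}_{\mathbf v'}$. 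Applying (\ref{eq: recurrence f poly uni ver}) to such a point completes the proof; the case $v_k^+=0$ is symmetric, applying the same reasoning to $F_k^-$ on the $H'$-side.
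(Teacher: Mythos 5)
There is a genuine gap, and it sits exactly where you locate the ``key point.'' Your claim that the locus $U^\star_{\mathbf r}\subseteq \replf(H,\mathbf r)$ where $M_{k,\mathrm{in}}$ is surjective is nonempty (hence dense) is false whenever $v_k^+ > \sum_{j}[b_{kj}]_+v_j^+$: then $M_{k,\mathrm{in}}$ is a map from a free $H_k$-module of rank $\sum_j [b_{kj}]_+ v_j^+ < v_k^+$ into $H_k^{v_k^+}$ and can never be surjective (already for $\mathbf v=\alpha_k$, where the only module is $E_k$). In that regime, and dually when $v_k^->0$ (where the excess rank appears at $k$ on the $H'$-side), the generic module splits off pseudo-simple summands $E_k^{\oplus a}$, the rank $(\mathbf v')^+$ is \emph{not} $s_{\alpha_k}(\mathbf v^+)$, and \Cref{prop: recurrence under reflection uniform ver} cannot be applied to the generic point itself but only to the complement of the $E_k$-part. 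This is why the paper's proof splits into three cases ($v_k\le 0$; $v_k\ge 0$ with $\sum_j[b_{kj}]_+v_j^+\le v_k$; $v_k\ge 0$ with $\sum_j[b_{kj}]_+v_j^+\ge v_k$), and only in the last case does your argument --- the genericity transfer along $F_k^+$ via the incidence/quotient (Quot--Grassmannian, principal $\mathrm{GL}$-bundle) picture, which is essentially the paper's --- go through as you state it. In the other two cases one must additionally prove that the generic point of the relevant representation space decomposes as $\mathcal M\oplus E_k^{\oplus a}$ with $\mathcal M$ itself generic; the paper does this with a free group action and a quotient, and it is not a consequence of irreducibility alone.

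The second error is the bookkeeping of these summands: $F_{E_k}=1+y_k$, not $1$ ($E_k$ has exactly two locally free submodules, $0$ and $E_k$), and \Cref{rmk: cc function direct sum} gives multiplicativity, so the split-off summands contribute $(1+y_k)^{a}$, resp.\ $(1+y_k')^{v_k^-}$. These factors are precisely what the asymmetric prefactors $(1+y_k)^{-v_k^+}$ and $(1+y_k')^{-v_k^-}$ in (\ref{eq: reflection generic value}) are designed to absorb. With your assertion that ``the extra summands contribute trivially,'' the case $v_k^+=0$, $v_k^->0$ would yield $F^H_{\mathbf v}=(1+y_k')^{-v_k^-}F^{H'}_{\mathbf v'}(y')$, which is off by $(1+y_k')^{v_k^-}$ from the claimed identity; similarly, when $0<\sum_j[b_{kj}]_+v_j^+<v_k^+$ the exponent supplied by \Cref{prop: recurrence under reflection uniform ver} for the $(\star)$-part $\mathcal M$ is $h_k(\mathcal M)=-\sum_j[b_{kj}]_+v_j^+$, not the $-v_k^+$ appearing in (\ref{eq: reflection generic value}), and the discrepancy is again exactly the $E_k$-factor $(1+y_k)^{v_k^+-\sum_j[b_{kj}]_+v_j^+}$ of the generic module. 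So the recurrence does not ``read exactly as'' (\ref{eq: reflection generic value}) after substituting generic values; reconciling the two requires the case analysis and the $(1+y_k)$-factors you discarded.
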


\begin{proof}

    For $i\in I$ a sink (resp. source), let $\drepfr(H, \mathbf v, i)$ denote the open subset of $\dreplf(H, \mathbf v)$ with full rank $M_{i, \mathrm{in}}$ (resp. $M_{i, \mathrm{out}}$). The group $\mathrm{GL}_{v_i^+}(H_i)$ acts on $\drepfr(H, \mathbf v, i)$ by change of $H_i$-bases in $H_i^{v_i^+}$.

    Recall that we have defined $\gr_{e}^{H_k}(r)$ the Grassmannian of free $H_k$-submodules of rank $e$ in $H_k^r$ for $e\leq r$. Dually we denote $\mathrm{Quot}_e^{H_k}(r)$ the set of free factor modules of $H_k^r$ of rank $e$. There is a natural isomorphism
    \begin{equation}\label{eq: quot to grass}
        \varphi \colon \mathrm{Quot}_{r-e}^{H_k}(r) \rightarrow \gr_e^{H_k}(r), \quad V \mapsto \ker(H_k^r \twoheadrightarrow V).
    \end{equation}

    Define
    \[
        v_{k, \mathrm{in}} \coloneqq \sum_{(k, j)\in \Omega} b_{kj}v_j^+ \quad \text{and} \quad
        v'_{k, \mathrm{out}} \coloneqq \sum_{(j, k)\in \Omega'} -b'_{kj}(v'_j)^+.
    \]
    Note that these two numbers equal.
    
    Assume first that $v_k \leq 0$, i.e., $v_k^+ = 0$. In this case $\drepfr(H, \mathbf v, k) = \dreplf(H, \mathbf v)$. A decorated module $M$ belongs to $\drepfr(H', \mathbf v', k)$ if and only if 
    \[
        M_{k, \mathrm{out}} \colon H_k^{v_k^- + v_{k, \mathrm{out}}'} \rightarrow H_k^{v_{k, \mathrm{out}}'}
    \]
    is of full rank, i.e., surjective. Then it must be isomorphic to $\mathcal M \oplus E_k^{v_k^-}$ where $\mathcal M$ is in $\drepfr(H', \tilde {\mathbf v}', k)$ and $\tilde {\mathbf v}' = \mathbf v' - v_k^-\alpha_k$. Notice that $G = \mathrm{GL}_{v_{k, \mathrm{out}}'}(H_k)$ acts freely on $\drepfr(H', \tilde {\mathbf v}', k)$ fixing isomorphism classes. The quotient $\drepfr(H', \tilde{\mathbf v}', k)/G$ is identified with the affine space $\dreplf(H', \bar{\mathbf v}')$, where $\bar{\mathbf v}' = \tilde{\mathbf v}' - v_{k,\mathrm{out}}'\alpha_k$ (thus with zero $k$-component).

    Then there is a projection $\drepfr(H', \mathbf v', k)$ to $\drepfr(H', \tilde{\mathbf v}', k)/G$ by forgetting $M_{k, \mathrm{out}}$ that sends (the isoclass of) $\mathcal M\oplus E_k^{v_k^-}$ to $\mathcal M$. Take an open subset $U$ in $\drepfr(H', \tilde {\mathbf v}', k)/G$ where the $F$-polynomial takes the generic value $F_{\tilde {\mathbf v}'}$. Pull $U$ back to $\drepfr(H', \mathbf v', k)$ and we know that the $F$-polynomials there are that of $\mathcal M\oplus E_k^{v_k^-}$ for $\mathcal M\in U$. Therefore we have (by \Cref{rmk: cc function direct sum})
    \[
        F^{H'}_{\mathbf v'}(y_1',\dots, y_n') = (1 + y_k')^{v_k^-}F^{H'}_{\tilde {\mathbf v}'}(y_1', \dots, y_n').
    \]
    
    Then notice that the reflection functor $F_k^+$ precisely gives an isomorphism 
    \[
        f \colon \drepfr(H, \mathbf v, k) \xlongrightarrow{\sim} \drepfr(H', \tilde {\mathbf v}', k)/G.
    \]
    The generic $F$-polynomial $F^H_\mathbf v$ can then be taken within (an open subset of) $f^{-1}(U)$. In particular one can find $\mathcal M\in f^{-1}(U)$ such that $F^H_\mathbf v = F^H_\mathcal M$ and $F_{\tilde {\mathbf v}'}^{H'} = F^{H'}_{\mathcal M'}$ for $\mathcal M'\in U$ isomorphic to $F_k^+(\mathcal M)$. Now (\ref{eq: reflection generic value}) follows from \Cref{prop: recurrence under reflection uniform ver} (which relates $F^H_\mathbf v$ with $F^{H'}_{\tilde {\mathbf v}'}$) in this case.

    Next we consider the case where $v_k \geq 0$, i.e., $v_k^- = 0$. If $v_{k, \mathrm{in}}\leq v_k$, then the situation is similar to the previous case. Now a decorated module in $\drepfr(H, \mathbf v, k)$ is isomorphic to $\mathcal M \oplus E_k^{v_k - v_{k, \mathrm{in}}}$ where $\mathcal M$ is in $\drepfr(H, \mathbf v - v_{k, \mathrm{in}}\alpha_k, k)$. The result eventually follows from \Cref{prop: recurrence under reflection uniform ver} using a similar argument as in the previous case.

    Finally consider the situation when $v_{k, \mathrm{in}}\geq v_k$. Now $v'_k = v_{k, \mathrm{in}} - v_k \geq 0$. Let $\mathbf v_{\hat k} \coloneqq \mathbf v - v_k\alpha_k$ and $\mathbf v'_{\hat k} \coloneqq \mathbf v' - v_k'\alpha_k$. Then we have
    \begin{align*}
        & \drepfr(H, \mathbf v, k) = \dreplf(H, \mathbf v_{\hat k}) \times \mathrm{Mat}^\circ_{v_k \times v_{k, \mathrm{in}}}(H_k), \\
        & \drepfr(H', \mathbf v', k) = \dreplf(H', \mathbf v_{\hat k}') \times \mathrm{Mat}^\circ_{v_{k, \mathrm{out}}'\times v_k'}(H_k),
    \end{align*}
    where $\mathrm{Mat}^\circ$ means full rank matrices. Notice that $\dreplf(H, \mathbf v_{\hat k}) = \dreplf(H', \mathbf v_{\hat k}')$. 
    There is the following diagram:
    \[
        \begin{tikzcd}
            \drepfr(H, \mathbf v, k) \ar[d, swap, "\pi"] & \drepfr(H', \mathbf v', k) \ar[d, "\pi'"]\\
            \dreplf(H, \mathbf v_{\hat k}) \times \mathrm{Quot}^{H_k}_{v_k}(H_k^{v_{k,\mathrm{in}}}) \ar[r, "\sim", "{(\mathrm{id}, \varphi)}"'] & \dreplf(H', \mathbf v'_{\hat k}) \times \gr^{H_k}_{v_k'}(H_k^{v_{k,\mathrm{out}}'})
        \end{tikzcd}
    \]
    The map $\pi$ (resp. $\pi'$) is taking the quotient by the free action of $\mathrm{GL}_{v_k}(H_k)$ (resp. $\mathrm{GL}_{v_k'}(H_k)$) on $\mathrm{Mat}^\circ_{v_k\times v_{k, \mathrm{in}}}(H_k)$ (resp. $\mathrm{Mat}^\circ_{v_{k, \mathrm{out}}'\times v_k'}(H_k)$). Thus these vertical maps are principal $G$-bundles. The isomorphism in the bottom row is induced precisely by the reflection functor $F_k^+$, where $\varphi$ is described in (\ref{eq: quot to grass}). Then we can take open subsets $U$ and $U'$ respectively of the domain and codomain such that $U'= (\mathrm{id}, \varphi)(U)$ and generic $F$-polynomials are respectively taken within $U$ and $U'$. This means $F^{H}_\mathbf v = F^H_{\mathcal M}$ for any $\mathcal M\in \pi^{-1}(U)$ and $F^{H'}_{\mathbf v'} = F^{H'}_{\mathcal M'}$ for any $\mathcal M'\in (\pi')^{-1}(U')$. In particular, one can choose $\mathcal M' \cong F_k^+(\mathcal M)$. Then by \Cref{prop: recurrence under reflection uniform ver}, the generic $F$-polynomials $F_\mathbf v^H$ and $F_{\mathbf v'}^{H'}$ are related exactly as (\ref{eq: reflection generic value}).
\end{proof}

We remark that \Cref{prop: reflection generic f poly} is valid for general $C$ not only restricted to affine or finite types.

\subsection{\texorpdfstring{Correspondence between injective $\mathbf g$-vectors and rank vectors}{Correspondence between g-vectors and d-vectors}}\label{subsec: between g and d vectors}

We define a bijection $\mathbb{Z}^n \to \mathbb{Z}^n$ that is expected to relate injective $\mathbf g$-vectors to rank vectors of decorated representations. This construction can be viewed as the representation-theoretic analogue of the familiar correspondence between $\mathbf g$- and $\mathbf d$-vectors for acyclic cluster algebras; see \cite{BMR,FuKeller} for the skew-symmetric case.

\begin{definition}
    For $\mathcal M = (M, \mathbf v^-) \in \dreplf(H, \mathbf v)$, we define the \emph{injective $\mathbf g$-vector} by
    \[
        \mathbf g^\mathrm{inj}_H(\mathcal M) \coloneqq \mathbf g^\mathrm{inj}_H(M) + \mathbf v^-.
    \]
\end{definition}

Let $\mathbf v = (v_i)_{i\in I}$ in $\mathbb Z^n$ viewed as a decorated rank vector. The corresponding $\mathbf g$-vector $\mathbf g = (g_i)_{i\in I}$ is given by
\[
    g_i = v_i^- - v_i^+ + \sum_{j\in I} [-b_{ij}]_+v_j^+ = -v_i + \sum_{j\in I} [-b_{ij}]_+v_j^+.
\]
It is $\mathbf g^\mathrm{inj}(\mathcal M)$ for any $\mathcal M\in \dreplf(H, \mathbf v)$. Sending $\mathbf v$ to $\mathbf g$ gives a bijection from $\mathbb Z^n$ to $\mathbb Z^n$.

Now we describe the inverse of the above map. Start with a sink $i_1$ of $\Omega$. Suppose that $i_{t}$ is a sink with $i_1, \dots, i_{t-1}$ removed. Define
\[
    v_{i_1} \coloneqq -g_{i_1} \quad \text{and} \quad v_{i_t} \coloneqq -g_{i_t} + \sum_{s=1}^{t-1}[-b_{i_t, i_s}]_+v_{i_s}^+ = -g_{i_t} + \sum_{s=1}^{t-1}-b_{i_t, i_s}v_{i_s}^+
\]
inductively. In this way we recover $\mathbf v$ from $\mathbf g$.

\subsection{Reflections of generic CC functions with coefficients}

Suppose that $\widetilde B = (\tilde b_{ij})\in \mathrm{Mat}_{m\times n}(\mathbb Z)$ is extended from $B$. Let $\widetilde B' = (\tilde b_{ij}') = \mu_k(\widetilde B)$.

Define a piecewise linear bijection $T_k\colon \mathbb Z^m \rightarrow \mathbb Z^m$, sending $\tilde{\mathbf g} = (g_i)_{i=1}^m$ to $\tilde {\mathbf g}' = (g'_i)_{i=1}^m$, by
\begin{equation}\label{eq: mutation rule extended g vector}
    g_k' = - g_k \quad \text{and} \quad g_i' = \begin{dcases}
            g_i + [-b_{ik}]_+g_k \quad & \text{if $g_k\leq 0$} \\
            g_i + [b_{ik}]_+g_k \quad & \text{if $g_k\geq 0$}
        \end{dcases} \quad \text{for $i\neq k$}.
\end{equation}

Following \Cref{subsec: between g and d vectors}, let $\mathbf v$ (resp. $\mathbf v'$) correspond to $\mathbf g = (g_i)_{i=1}^n$ (resp. $\mathbf g' = (g'_i)_{i=1}^n$), the \emph{principal part} of $\tilde {\mathbf g}$ (resp. $\tilde {\mathbf g}'$). The following lemma is straightforward.

\begin{lemma}\label{lemma: transformation principal rank vector}
    The vectors $\mathbf v$ and $\mathbf v'$ are related exactly by (\ref{eq: reflection decorated rank vector}).
\end{lemma}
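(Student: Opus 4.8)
The plan is to verify the claim by direct comparison of the two transformation rules, splitting into cases according to the sign of $g_k$, exactly as both formulas (\ref{eq: reflection decorated rank vector}) and (\ref{eq: mutation rule extended g vector}) do. First I would recall from \Cref{subsec: between g and d vectors} the explicit inversion formula expressing $\mathbf v$ from $\mathbf g$: since $k$ is a sink of $\Omega$, it has no predecessors, so $b_{kj}\leq 0$ for all $j$ (indeed $b_{kj} = -c_{kj} \geq 0$ precisely when $(j,k) \in \Omega$, impossible at a sink, so in fact $b_{kj} \le 0$), hence $[-b_{kj}]_+ = -b_{kj} = [b_{kj}]_+$ makes sense; and because $k$ is a sink it can be taken as the very first vertex $i_1$ in the elimination order, giving $v_k = -g_k$ directly. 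Similarly, on the $\widetilde B'$-side one checks that $k$ is a source of $s_k(\Omega)$, and the inversion there must be organized with $k$ eliminated \emph{last}; this asymmetry is the only subtle bookkeeping point.

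The core computation then goes as follows. For $i \neq k$, the principal-part mutation rule (\ref{eq: recurrence g vector uni ver}) / (\ref{eq: recurrence g vector dwz}) together with the relation $g_k = h_k - h_k'$ and $h_k h_k' = 0$ from \Cref{rmk: h vectors} shows that $g_i' = g_i + [b_{ik}]_+ g_k - b_{ik}h_k$ becomes $g_i' = g_i + [b_{ik}]_+ g_k$ when $g_k \geq 0$ (so $h_k' = 0$, $h_k = g_k$, but one must be careful: at a sink, \Cref{lemma: h vector} gives $h_k = -m_k = v_k = -g_k$, and $g_k \ge 0$ forces $h_k \le 0$, consistent with $h_k' = 0$ only if $g_k = h_k$, i.e. we are in the decorated setting where $v_k^- $ enters) — here I would instead use the already-packaged rule (\ref{eq: mutation rule extended g vector}) as the \emph{definition} of $T_k$ and simply verify that applying the $\mathbf g \mapsto \mathbf v$ bijection before and after is compatible with (\ref{eq: reflection decorated rank vector}). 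Concretely: compute $v_k' = -g_k' + \sum_{s}(-b_{ks})v_s^+$ over the $s$ eliminated before $k$ in $s_k(\Omega)$'s order, i.e. effectively $v_k' = g_k + \sum_{j\neq k}[b_{kj}]_+ v_j^+$ (using $k$ is a source of $s_k(\Omega)$ so the relevant $b'_{kj}$ have the right sign and all other $v_j$ are unchanged), and check this equals $-v_k + \sum_j [b_{kj}]_+ v_j^+$ from (\ref{eq: reflection decorated rank vector}) using $g_k = -v_k$; for $i \neq k$, check $v_i' = v_i$ is forced because $g_i' = g_i + [b_{ik}]_+g_k$ (when $g_k \ge 0$) or $g_i + [-b_{ik}]_+ g_k$ (when $g_k \le 0$) and the elimination recursion for $v_i$ reads off the same correction term $[-b_{ij}]_+ v_j^+$, which cancels the difference $g_i' - g_i$ against the change in $v_k^+$.

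The only genuinely delicate case is when $g_k$ and hence $v_k$ change sign under the reflection — that is, $v_k^+ > 0$ but $v_k'^- > 0$, or vice versa — because then $v_k^+$ appearing in the correction terms for $v_i'$ ($i \neq k$) differs from $v_k'^+$, and one must check the piecewise-linear $T_k$ of (\ref{eq: mutation rule extended g vector}) accounts for exactly this discrepancy. I would handle it by the standard identity $[x]_+ - [-x]_+ = x$: writing $v_k^+ - v_k'^+ = v_k^+ - [-g_k']_+ $ and tracking which branch of (\ref{eq: mutation rule extended g vector}) is active shows the two prescriptions agree on the nose.

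\medskip

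\emph{Remark on difficulty.} This lemma is labelled ``straightforward'' in the text, and indeed the substance is entirely in correctly matching the two elimination orders (sink-first for $\Omega$, source-last for $s_k(\Omega)$) and invoking $[x]_+ - [-x]_+ = x$; the main obstacle is notational rather than conceptual, namely keeping the decorated parts $\mathbf v^\pm$ and the sign cases of $g_k$ aligned between (\ref{eq: reflection decorated rank vector}), (\ref{eq: mutation rule extended g vector}), and the inversion recursion of \Cref{subsec: between g and d vectors}.
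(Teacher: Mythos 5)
Your overall strategy is the right one, and it is what the paper leaves implicit in calling the lemma straightforward: take (\ref{eq: mutation rule extended g vector}) as the definition of the map on principal parts and verify that the two bijections of \Cref{subsec: between g and d vectors} — the one for $B$ and the one for $B'=\mu_k(B)$, i.e.\ for $s_k(\Omega)$ — intertwine it with (\ref{eq: reflection decorated rank vector}). The cancellation you single out is indeed the whole content: for $i\neq k$ the difference $g_i'-g_i$ is absorbed by the correction term $[-b_{ik}]_+v_k^+$, which appears in the recursion for $\Omega$ (where $k$ is eliminated first) but not in the recursion for $s_k(\Omega)$ (where $k$ can be eliminated last); and your final formula for $v_k'$, checked against $g_k=-v_k$, is correct.

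However, your sign bookkeeping is wrong as stated and would derail a literal execution. By \Cref{def: matrix b}, at a sink $k$ of $\Omega$ one has $b_{kj}\ge 0$ and $b_{jk}\le 0$ for all $j$ — you assert $b_{kj}\le 0$, misreading the definition — and the asserted chain $[-b_{kj}]_+=-b_{kj}=[b_{kj}]_+$ is false for any nonzero entry. The facts actually needed are: $[-b_{kj}]_+=0$, whence $g_k=-v_k$; $[b_{ik}]_+=0$, which is precisely what makes the branch $g_k\ge 0$ of (\ref{eq: mutation rule extended g vector}) give $g_i'=g_i$ (this is the step that fails under your stated signs, since then $[b_{ik}]_+g_k$ need not vanish, while $v_k^+=0$ leaves nothing to cancel it); and after mutation $[-b'_{kj}]_+=[b_{kj}]_+$ while $[-b'_{ik}]_+=0$. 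The last equality also shows that your ``genuinely delicate case'' is vacuous: $(v_k')^+$ enters the $B'$-side formula for $g_i'$, $i\neq k$, with coefficient zero (equivalently, $k$ is eliminated last on that side), so no discrepancy between $v_k^+$ and $(v_k')^+$ ever needs to be reconciled; the piecewise linearity of (\ref{eq: mutation rule extended g vector}) is accounted for solely by whether $v_k^+=v_k$ (case $g_k\le 0$) or $v_k^+=0$ (case $g_k\ge 0$). With these signs corrected, your argument closes.
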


\begin{definition}\label{def: generic cc function coef}
    For each $\tilde {\mathbf g}\in \mathbb Z^m$, the generic CC function with coefficients is
    \begin{equation}\label{eq: generic cc function coefficients}
        X^{\widetilde B}_{\tilde {\mathbf g}} (x_1, \dots, x_m) \coloneqq x^{\tilde {\mathbf g}} \cdot F_{\mathbf v}(\hat y_1, \dots, \hat y_n) \in \mathbb Z[x_1^\pm, \dots, x_m^\pm]
    \end{equation}
    where $\hat y_i = \prod_{j=1}^m x_j^{\tilde b_{ji}}$. 
\end{definition}

Next we present the reflection of generic CC function with coefficients.

\begin{proposition}\label{prop: reflection generic cc function coef}
    Let $k$ be a sink or source. Then we have
    \[
        X_{\tilde {\mathbf g}}^{\widetilde B}(x_1, \dots, x_m) = X_{\tilde {\mathbf g}'}^{\widetilde B'}(x_1', \dots, x_m')
    \]
    where $x_i' = x_i$ for $i\neq k$ and $x_k' = x_k^{-1}\left(\prod_{j=1}^m x_j^{[\tilde b_{jk}]_+} + \prod_{j=1}^m x_j^{[-\tilde b_{jk}]_+}\right)$.
\end{proposition}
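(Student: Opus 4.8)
The plan is to reduce \Cref{prop: reflection generic cc function coef} to \Cref{prop: reflection generic f poly} by carefully matching the two sides of the claimed identity through the separation-type formula (\ref{eq: generic cc function coefficients}). The case of a sink $k$ will be treated first and in detail; the source case will then follow by a symmetric argument (or by applying the sink case to the opposite orientation), so I would remark on this rather than repeat it.

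First I would expand both $X_{\tilde{\mathbf g}}^{\widetilde B}$ and $X_{\tilde{\mathbf g}'}^{\widetilde B'}$ using \Cref{def: generic cc function coef}. On the left we have $x^{\tilde{\mathbf g}}\cdot F_{\mathbf v}(\hat y_1,\dots,\hat y_n)$ with $\hat y_i = \prod_{j=1}^m x_j^{\tilde b_{ji}}$; on the right we have $(x')^{\tilde{\mathbf g}'}\cdot F_{\mathbf v'}(\hat y_1',\dots,\hat y_n')$ with $\hat y_i' = \prod_{j=1}^m (x_j')^{\tilde b_{ji}'}$. The substitution $x_k\mapsto x_k'$ is exactly the cluster mutation $\mu_k$ on the extended cluster. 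Now the key inputs are: (i) \Cref{prop: reflection generic f poly}, which gives $(1+y_k)^{-v_k^+}F_{\mathbf v}^H(y_\bullet) = (1+y_k')^{-v_k^-}F_{\mathbf v'}^{H'}(y_\bullet')$ under the change of variables $y_i' = y_i y_k^{[b_{ki}]_+}(y_k+1)^{-b_{ki}}$, $y_k' = y_k^{-1}$; (ii) \Cref{lemma: transformation principal rank vector}, which tells us $\mathbf v$ and $\mathbf v'$ are related by (\ref{eq: reflection decorated rank vector}), so the $F$-polynomial recurrence applies with precisely these $\mathbf v,\mathbf v'$; and (iii) the piecewise-linear mutation rule (\ref{eq: mutation rule extended g vector}) for $T_k$, which is designed to match the $\mathbf g$-vector recurrence (\ref{eq: recurrence g vector uni ver}).

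The heart of the argument is a bookkeeping computation: I would verify that after substituting $x_k = x_k'^{-1}(\prod_j x_j^{[\tilde b_{jk}]_+} + \prod_j x_j^{[-\tilde b_{jk}]_+})$, the hatted variables transform as $\hat y_i \mapsto \hat y_i'$ up to the correcting factor $(1+y_k)$ versus $(1+y_k')$, where here $y_k$ should be read as the appropriate monomial $\hat y_k$ (so that $1+\hat y_k = x_k^{-1}(\text{numerator of }x_k')$ in the sink normalization, and dually for a source). Concretely, with $k$ a sink one has $\tilde b_{kk}=0$ and the denominators on the two sides are controlled by $v_k^+$ and $v_k^-$; the factor $(1+\hat y_k)^{-v_k^+}$ on the left and $(1+\hat y_k')^{-v_k^-}$ on the right are absorbed into the monomial prefactors $x^{\tilde{\mathbf g}}$ and $(x')^{\tilde{\mathbf g}'}$ precisely because of how $\tilde{\mathbf g}$ transforms under $T_k$ — this is where (\ref{eq: mutation rule extended g vector}) splits into the two cases $g_k\le 0$ and $g_k\ge 0$, matching $v_k^+=0$ and $v_k^-=0$ respectively. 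I would organize this as: (a) compute $\hat y_i'$ in terms of $\hat y_j$ and $x_k'$, checking it agrees with the $y_i'$ substitution of \Cref{prop: reflection generic f poly} after the monomial correction; (b) compute $(x')^{\tilde{\mathbf g}'}$ in terms of $x^{\tilde{\mathbf g}}$, $x_k'$, and $1+\hat y_k$, using (\ref{eq: mutation rule extended g vector}); (c) multiply (a) and (b) and invoke \Cref{prop: reflection generic f poly} together with \Cref{lemma: transformation principal rank vector} to conclude equality. The source case is identical after replacing $[\tilde b_{jk}]_+ \leftrightarrow [-\tilde b_{jk}]_+$ and using the $h_k(M)=0$ branch of the recurrences, so I would dispatch it in one sentence.

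The main obstacle I anticipate is keeping the sign conventions and the $[b_{ki}]_+$ versus $[-b_{ki}]_+$ exponents consistent across three different normalizations simultaneously: the $F$-polynomial variable change in \Cref{prop: reflection generic f poly}, the extended $\mathbf g$-vector rule $T_k$ in (\ref{eq: mutation rule extended g vector}), and the ordinary cluster mutation of the $\hat y_i$. In particular the subtlety is that (\ref{eq: mutation rule extended g vector}) is a \emph{piecewise} linear map while the Fomin--Zelevinsky recurrence (\ref{eq: recurrence g vector uni ver}) uses the auxiliary $\mathbf h$-vector; reconciling these requires using that for a sink $h_k = -v_k^+$ and for a source $h_k = 0$ (cf.\ \Cref{lemma: h vector} and \Cref{rmk: h vectors}), which is exactly the trichotomy already exploited in the proof of \Cref{prop: reflection generic f poly}. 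Once that dictionary is pinned down the verification is purely formal manipulation of Laurent monomials, so I would present it compactly rather than expanding every exponent.
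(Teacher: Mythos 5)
Your route is exactly the paper's: reduce to \Cref{prop: reflection generic f poly} by observing that $\prod_{j=1}^m (x_j')^{\tilde b_{ji}'}$ is the substitution $y_i'\mid(\hat y_i\leftarrow y_i,\ \hat y_k\leftarrow y_k)$, and then check the remaining monomial identity $x^{\tilde {\mathbf g}}\bigl(1+\hat y_k\bigr)^{v_k^+} = (x')^{\tilde {\mathbf g}'}\bigl(1+\hat y_k'\bigr)^{v_k^-}$ directly from (\ref{eq: mutation rule extended g vector}), with \Cref{lemma: transformation principal rank vector} guaranteeing the $F$-polynomial recurrence applies to the pair $\mathbf v,\mathbf v'$. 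Only correct two small slips in your bookkeeping: at a sink $g_k=-v_k$, so $g_k\leq 0$ corresponds to $v_k^-=0$ and $g_k\geq 0$ to $v_k^+=0$ (the reverse of what you wrote), and $1+\hat y_k = x_kx_k'\prod_{j=1}^m x_j^{-[-\tilde b_{jk}]_+}$ rather than $x_k^{-1}$ times the numerator of $x_k'$.
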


\begin{proof}
    This proposition is a corollary of \Cref{prop: reflection generic f poly}. We first observe that for each $i\in I$, we have
    \[
        \prod_{j=1}^m (x_j')^{\tilde b_{ji}'} = y_i'\mid (\hat y_i \leftarrow y_i, \hat y_k \leftarrow y_k)
    \]
    where $y_i' = y_iy_k^{[b_{ki}]_+}(y_k+1)^{-b_{ki}}$ for $i\neq k$ and $y_k' = y_k^{-1}$.

    Then by \Cref{prop: reflection generic f poly}, it amounts to showing (when $k$ is a sink)
    \[
        x^{\tilde {\mathbf g}} \left(1 + \prod_{j=1}^m x_j^{\tilde b_{jk}}\right)^{v_k^+} = (x')^{\tilde {\mathbf g}'} \left(1 + \prod_{j=1}^m (x'_j)^{\tilde b'_{jk}}\right)^{v_k^-}.
    \]
    This is checked by using the explicit formula (\ref{eq: mutation rule extended g vector}) relating $\tilde {\mathbf g}$ and $\tilde {\mathbf g}'$.
\end{proof}

\subsection{Generic bases}

This section is devoted to prove

\begin{theorem}\label{thm: generic basis}
    For any $\widetilde B$ (extended from $B$ of affine type) of full rank, the following set $\mathcal S$ of generic CC functions
    \[
        \mathcal S = \{ X_{\tilde {\mathbf g}}^{\widetilde B} \mid {\tilde {\mathbf g}}\in \mathbb Z^m \}\subseteq \mathbb Z[x_1^\pm,\dots, x_{m}^\pm]
    \]
    is a $\mathbb Z$-basis of the cluster algebra $\mathcal A(\widetilde B)$ containing all cluster monomials.
\end{theorem}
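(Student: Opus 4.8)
The plan is to verify the three hypotheses of Qin's characterization theorem \cite{MR4721032} for nice bases of injective-reachable cluster algebras: (i) the set $\mathcal S$ is parametrized by $\mathbb Z^m$ via extended $\mathbf g$-vectors and contains all cluster monomials; (ii) each $X^{\widetilde B}_{\tilde{\mathbf g}}$ is pointed at $\tilde{\mathbf g}$, i.e.\ its Laurent expansion in the initial cluster has leading term $x^{\tilde{\mathbf g}}$ with respect to the dominance order, with coefficient $1$; and (iii) the elements of $\mathcal S$ lie in the cluster algebra and are compatible with mutation (so that the $\mathbf g$-vector parametrization is seed-independent). Since $B$ is of affine type, the cluster algebra $\mathcal A(\widetilde B)$ is injective-reachable, so Qin's theorem applies and these conditions suffice to conclude $\mathcal S$ is a $\mathbb Z$-basis.

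First I would establish that $\mathcal S$ contains all cluster monomials. By the separation formula (\Cref{thm: separation formula}, \Cref{rmk: extended g vector trop eva}) together with \Cref{thm: main bijection cc function}, each non-initial cluster variable of $\mathcal A(\widetilde B)$ equals $X^{\widetilde B}_{\tilde{\mathbf g}}$ where $\tilde{\mathbf g}$ is its extended $\mathbf g$-vector and the principal part corresponds (via \Cref{subsec: between g and d vectors}) to the rank vector of the rigid module $M(\beta)$; the initial cluster variables correspond to the decoration vectors $\mathbf v^- = e_i$ (negative simple roots), for which $F_{\mathbf v} = 1$ and $X^{\widetilde B}_{\tilde{\mathbf g}} = x^{\tilde{\mathbf g}}$. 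For a cluster monomial in a single cluster, one uses that a generic module of rank vector $\mathbf r = \sum b_s \mathbf r_s$, for $\{M(\beta_s)\}$ a rigid summand of a single cluster-tilting object with multiplicities $b_s$, is the direct sum $\bigoplus M(\beta_s)^{b_s}$ (openness of the rigid locus in $\replf(H,\mathbf r)$, \Cref{subsec: affine spaces loc free}); then \Cref{rmk: cc function direct sum} gives $F_{\mathbf r} = \prod F_{M(\beta_s)}^{b_s}$, and incorporating the injective decoration exactly as in \Cref{rmk: cc function of cluster monomial} yields that $X^{\widetilde B}_{\tilde{\mathbf g}}$ is the cluster monomial. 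Here one must check that the generic value of $F$ on $\replf(H,\mathbf r)$ is indeed this product and not something larger; this follows because the rigid orbit is open and dense, and by \Cref{cor: open subset f poly} the generic $F$-polynomial is attained on a dense open set, which therefore meets the rigid orbit.

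Next I would prove the pointedness (condition (ii)) and membership in $\mathcal A(\widetilde B)$ (condition (iii)) simultaneously using \Cref{prop: reflection generic cc function coef}. The idea is a reachability argument: starting from $\tilde{\mathbf g}$, one can apply a sequence of sink/source reflections (equivalently, mutations via the piecewise-linear maps $T_k$ of (\ref{eq: mutation rule extended g vector})) that transports the principal part $\mathbf v$ into a position where either $\mathbf v = \mathbf v^-$ is purely a decoration — in which case $X^{\widetilde B'}_{\tilde{\mathbf g}'} = (x')^{\tilde{\mathbf g}'}$ is a Laurent monomial in the corresponding cluster, hence visibly in $\mathcal A(\widetilde B)$ — or $\mathbf v$ lies on a finite $\tau$-orbit / is preprojective-preinjective so that by \Cref{prop: f poly g vec preprojective}, \Cref{prop: f poly g vec preinjective}, \Cref{prop: f poly g vec finite c orb} the generic $F$-polynomial equals that of a rigid module and $X^{\widetilde B'}_{\tilde{\mathbf g}'}$ is a cluster variable or cluster monomial. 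In the affine case, the classification of real Schur roots (\Cref{prop: real schur roots affine type}) together with the structure of the AR quiver ensures the relevant rank vectors are exhausted by these families plus the regular ones in tubes above the real Schur level; for rank vectors on higher tube levels the generic module is not rigid, but one still obtains membership by writing the generic $F$-polynomial through a reflection recursion terminating at a tube mouth and invoking the known basis structure of the regular part (this is where the affine hypothesis is genuinely used). Pointedness at $\tilde{\mathbf g}$ then follows by tracking the leading term through \Cref{prop: reflection generic cc function coef}: each step multiplies by a factor $\bigl(1 + \prod x_j^{\tilde b_{jk}}\bigr)^{v_k^+}$ or its mutated analogue, whose $x^{\tilde{\mathbf g}}$-leading behavior is controlled by (\ref{eq: mutation rule extended g vector}), so the base case (a Laurent monomial, trivially pointed) propagates back.

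The main obstacle I anticipate is handling rank vectors lying strictly above the real Schur level in the tubes, where the generic $H$-module is \emph{not} rigid and its CC function is not a cluster monomial: one must show these generic functions still lie in $\mathcal A(\widetilde B)$ and are $\mathbf g$-pointed, which does not follow from the rigid-module machinery of \Cref{section: cc function} alone. The resolution will be to use \Cref{prop: reflection generic f poly} to reduce, via Coxeter functors, to the generic function of a module supported on a tube mouth (a regular simple), for which an explicit description is available, and then to appeal to the fact that in the affine case the regular part of the module category is well understood so that these generic regular CC functions coincide with the ``generic'' or ``bar'' elements already known to lie in the cluster algebra (cf.\ the affine-quiver results of Dupont and Ding--Xiao--Xu cited in \Cref{section: introduction}); combined with Qin's theorem this pins down $\mathcal S$ as the unique such basis. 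A secondary technical point is verifying that the $\mathbf g$-vector parametrization is genuinely injective and seed-coherent — that distinct $\tilde{\mathbf g}$ give distinct $X^{\widetilde B}_{\tilde{\mathbf g}}$ — which follows from pointedness once (ii) is established, since the leading exponent recovers $\tilde{\mathbf g}$.
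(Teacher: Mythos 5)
Your overall skeleton (Qin's characterization theorem, the sink/source reflection recurrence, and the separation formula to capture cluster monomials) is the same as the paper's, but the middle of your argument has a genuine gap exactly where you flag it. Your ``reachability'' step cannot work as stated: if the principal part $\mathbf v^+$ of $\tilde{\mathbf g}$ contains a multiple of the null root $\eta$ (or more generally corresponds to higher tube levels), then no sequence of sink/source reflections ever brings it to a pure decoration or to a rigid rank vector, since every simple reflection fixes $\eta$; the generic module stays non-rigid and its CC function is not a cluster monomial. Your proposed fix --- reduce to a tube mouth and ``invoke the known basis structure of the regular part'' via Dupont and Ding--Xiao--Xu --- is not available in this setting: those results are for path algebras of acyclic (symmetric) quivers, not for the skew-symmetrizable GLS algebras $H_K(C,D,\Omega)$, and nothing in the paper establishes membership or pointedness of the non-rigid generic functions by identifying them with previously known basis elements. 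So conditions (ii) and (iii) of your plan are not actually proved for the imaginary/regular part of $\mathbb Z^m$.

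The paper avoids this case analysis entirely, in two ways you did not exploit. First, membership $X^{\widetilde B}_{\tilde{\mathbf g}}\in\mathcal A(\widetilde B)$ is proved for \emph{all} $\tilde{\mathbf g}$ at once (\Cref{lemma: generic cc function univ laurent}): using \Cref{prop: reflection generic cc function coef} one moves by sink/source mutations to a bipartite seed $t$, observes that $X^{\widetilde B_t}_{\tilde{\mathbf g}_t}$ is a Laurent polynomial in $t$ and in every adjacent seed (each adjacent mutation is again at a sink or source), and then invokes Berenstein--Fomin--Zelevinsky: the upper bound equals the upper cluster algebra, which equals $\mathcal A(\widetilde B)$ for an acyclic seed of full rank. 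No identification with cluster monomials or regular ``bar'' elements is needed. Second, Qin's Theorem 4.3.1 only requires each element to be \emph{compatibly pointed along one injective-reachable mutation sequence}, and the paper chooses the all-source sequence $n, n-1,\dots,1$; along that sequence \Cref{prop: reflection generic cc function coef} (which rests on \Cref{prop: reflection generic f poly} and is valid for arbitrary rank vectors, rigid or not) gives pointedness and the correct tropical transformation of $\tilde{\mathbf g}$ uniformly, so tubes and imaginary roots never require separate treatment. If you restructure your argument around these two points --- Laurentness in the star of a bipartite seed plus BFZ for membership, and compatible pointedness only along the source sequence --- your proof closes the gap; the cluster-monomial containment via \Cref{rmk: cc function of cluster monomial}, the open rigid orbit, and the separation formula is fine as you wrote it.
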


We start with the following lemma.

\begin{lemma}\label{lemma: generic cc function univ laurent}
    Any generic CC function $X_{\tilde{\mathbf g}}^{\widetilde B}$ for $\tilde {\mathbf g}\in \mathbb Z^m$ is an element in $\mathcal A(\widetilde B)$.
\end{lemma}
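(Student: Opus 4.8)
The plan is to reduce the general statement to two facts: first, that every generic CC function $X^{\widetilde B}_{\tilde{\mathbf g}}$ is, up to the tropical normalization in the separation formula, obtained from a coefficient-free (or principal-coefficient) CC function; and second, that the coefficient-free generic CC functions $X_M$ lie in $\mathcal A(B)$. For the latter I would argue by a reachability/induction argument using \Cref{prop: reflection generic cc function coef}: starting from the initial seed, the generic CC functions $X^{\widetilde B}_{\tilde{\mathbf g}}$ transform under sink/source mutations $\mu_k$ exactly the way the cluster algebra generators do (with $x_i\mapsto x_i'$ the mutation of variables and $\tilde{\mathbf g}\mapsto\tilde{\mathbf g}'=T_k(\tilde{\mathbf g})$ the piecewise-linear mutation of $\mathbf g$-vectors). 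Hence it suffices to show that every $\tilde{\mathbf g}$ can be brought, by a finite sequence of these piecewise-linear mutations $T_k$ at sinks/sources, into a form for which $X^{\widetilde B}_{\tilde{\mathbf g}}$ is manifestly a cluster monomial — or more weakly, into a form where the principal part $\mathbf v=\mathbf v(\tilde{\mathbf g})$ has small support so that the generic $F$-polynomial is explicitly computable and $X^{\widetilde B}_{\tilde{\mathbf g}}$ is visibly a Laurent polynomial over $\mathbb Z[x_{n+1}^\pm,\dots,x_m^\pm]$ in the cluster variables of that seed.

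Concretely I would proceed as follows. Step one: record that $X^{\widetilde B}_{\tilde{\mathbf g}}=x^{\tilde{\mathbf g}}F_{\mathbf v}(\hat y_1,\dots,\hat y_n)$ with $\hat y_i=\prod_{j=1}^m x_j^{\tilde b_{ji}}$, so that by \Cref{thm: separation formula} and \Cref{rmk: extended g vector trop eva} it is enough to treat the principal-coefficient case $\widetilde B=\begin{bsmallmatrix}B\\ I\end{bsmallmatrix}$, where $X^{\widetilde B}_{\tilde{\mathbf g}}$ becomes (up to sign conventions) the generic analogue $F_{\mathbf v}(\hat y_1,\dots,\hat y_n)\,x^{\mathbf g}$ of a cluster variable, an element of $\mathbb Z[x_1^\pm,\dots,x_n^\pm,y_1,\dots,y_n]$ precisely when $F_{\mathbf v}\in\mathbb Z[y_1,\dots,y_n]$ — which holds by definition. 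Step two: use \Cref{prop: reflection generic cc function coef} to see that under a sink/source mutation $\mu_k$ the element $X^{\widetilde B}_{\tilde{\mathbf g}}$ is carried to $X^{\widetilde B'}_{T_k(\tilde{\mathbf g})}$ after the substitution $x_k\mapsto x_k'$; since $x_k'\in\mathcal A(\widetilde B)$ and $\mathcal A(\widetilde B')\cong\mathcal A(\widetilde B)$ under this change of seed, membership in the cluster algebra is preserved along any admissible sequence. Step three: invoke that $B=B(C,\Omega)$ is acyclic, so by repeatedly mutating at sinks/sources one realizes every seed in a large family; combined with the real-Schur-root / $\mathbf g$-vector bookkeeping of \Cref{section: coxeter action} and \Cref{subsec: between g and d vectors}, every $\tilde{\mathbf g}$ is equivalent under the $T_k$'s to one whose principal part $\mathbf v$ is supported on a single vertex or is a decoration vector, in which case $F_{\mathbf v}$ is an explicit polynomial (a product of $(1+y_i)$'s or $1$) and $X^{\widetilde B}_{\tilde{\mathbf g}}$ is visibly in $\mathcal A(\widetilde B)$. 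Transporting back along the mutation sequence via Step two gives the claim for arbitrary $\tilde{\mathbf g}$.

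The main obstacle I anticipate is Step three: one must be sure that every integer vector $\tilde{\mathbf g}$ (equivalently every principal part $\mathbf v\in\mathbb Z^n$) can actually be reduced via sink/source mutations alone to an explicitly understood base case. For the $\tilde{\mathbf g}$'s that correspond to cluster monomials this is the content of \Cref{thm: main bijection cc function} together with \Cref{rmk: cc function of cluster monomial}, but for the remaining $\tilde{\mathbf g}$'s — those whose principal part is an imaginary rank vector (a non-rigid module supported, e.g., on a tube) — one needs that generic $F$-polynomials on those affine spaces are still tracked correctly by the reflection recurrence. This is exactly what \Cref{prop: reflection generic f poly} provides, and it holds in full generality, so the argument does go through; the care lies in checking that the sink/source reduction terminates and lands in a genuinely computable base case (for imaginary roots this amounts to reducing to the well-understood generic $F$-polynomial of a homogeneous regular module, whose Grassmannian Euler characteristics are classical). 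I would therefore organize the proof around a clean induction on a suitable complexity measure of $\tilde{\mathbf g}$ (e.g.\ the level in the tube plus the distance of the preprojective/preinjective part from the initial slice), with \Cref{prop: reflection generic cc function coef} as the inductive step and the finitely many mouth/simple cases as the base.
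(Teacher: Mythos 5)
Your Step two (transporting the generic CC function along sink/source mutations via \Cref{prop: reflection generic cc function coef}) is exactly the paper's first move, but your certification mechanism in Steps one and three has a genuine gap. The plan to reduce every $\tilde{\mathbf g}$ by sink/source mutations to a base case whose principal part $\mathbf v$ is supported on a single vertex (or is pure decoration) cannot work in affine type: the imaginary part of $\mathbf v^+$ is a multiple of the minimal imaginary root $\eta$, which is fixed by every simple reflection, hence by every transformation $T_k$ coming from a sink/source mutation (see (\ref{eq: reflection decorated rank vector})). No admissible sequence ever shrinks that component, so the induction you sketch has no reachable base case precisely for the vectors that are not cluster monomials --- the only genuinely new elements the lemma is about. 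Moreover, even if you could compute the generic $F$-polynomial of $m\eta$ explicitly, that only shows $X^{\widetilde B}_{\tilde{\mathbf g}}$ is a Laurent polynomial in the variables of one seed, which is true by definition in the initial seed already and does not certify membership in $\mathcal A(\widetilde B)$: the cluster algebra is in general a proper subring of the Laurent ring of any single cluster. Your Step one is also not justified as stated --- the separation formula (\Cref{thm: separation formula}) applies to cluster variables, and there is no formal reduction of membership in $\mathcal A(\widetilde B)$ for arbitrary full-rank $\widetilde B$ to the principal-coefficient case for these generic, non-cluster-monomial elements.

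What is missing is a criterion that converts Laurentness into membership, and this is where the paper goes a different way: using sink/source mutations it moves to a seed $t$ where $B_t$ is bipartite (possible when $G(\Omega)$ is a tree), so that \emph{every} vertex of that seed is a sink or a source; then \Cref{prop: reflection generic cc function coef} shows $X^{\widetilde B_t}_{\tilde{\mathbf g}_t}$ is a Laurent polynomial not only at $t$ but at all $n$ adjacent seeds, and Berenstein--Fomin--Zelevinsky's results (\cite[Corollary 1.9]{MR2110627}, which is where the full-rank hypothesis enters, together with $\overline{\mathcal A}(\widetilde B)=\mathcal A(\widetilde B)$ for acyclic full-rank seeds \cite[Theorem 1.18]{MR2110627}) then give membership in $\mathcal A(\widetilde B)$. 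If you want to salvage your outline, replace Step three by this upper-cluster-algebra argument; without it, or some substitute handling the $\eta$-components, the proof does not close.
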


\begin{proof}
    Consider a sink/source mutation sequence from $t_0$ to $t$ in the tree $\mathbb T_n$. This induces a bijection
    \[
        T_{t_0, t} \colon \mathbb Z^m \rightarrow \mathbb Z^m
    \]
    by composing the maps $T_k$ (see (\ref{eq: mutation rule extended g vector})) along the mutation sequence. Denote $\tilde {\mathbf g}_t = T_{t_0, t}(\tilde {\mathbf g})$. By \Cref{prop: reflection generic cc function coef}, equivalently it suffices to show that
    \[
        X_{\tilde{\mathbf g}_t}^{\widetilde B_{t}}(x_{1;t}, \dots, x_{m; t})\in \mathbb Z[x_{1;t}^\pm, \dots, x_{m; t}^\pm]
    \]
    belongs to ${\mathcal A}(\widetilde B_t)$. By \cite[Proposition 1.8]{MR2110627}, for any $t\in \mathbb T_n$, the matrix $\widetilde B_t$ is also of full rank assuming $\widetilde B$ is.

    Suppose that $C$ is not of affine type $A$. In particular the underlying graph $G(\Omega)$ is a tree. Then there is always a sink/source mutation sequence such that $B_t$ is bipartite. Denote the $n$ vertices in $\mathbb T_n$ adjacent to $t$ by $t \frac{i}{\quad\quad} t_i$ for $i = 1, \dots, n$. Using \Cref{prop: reflection generic cc function coef} again, we know that any $X^{\widetilde B_t}_{\tilde {\mathbf g}_t}$ is a Laurent polynomial in any of the seeds $t_i$, that is, 
    \[
        X^{\widetilde B_t}_{\tilde{\mathbf g}_t} \in \mathbb Z[x_{1;t_i}^\pm, \dots, x_{m; t_i}^\pm]\quad \text{ for $i = 1, \dots, n$}.
    \]
    It follows from \cite[Corollary 1.9]{MR2110627} that $X^{\widetilde B_t}_{\tilde{\mathbf g}_t}$ belongs to the \emph{upper cluster algebra}
    \[
        \overline{\mathcal A}(\widetilde B) \coloneqq \bigcap_{v\in \mathbb T_n} \mathbb Z[x_{1;v}^\pm, \dots, x_{m; v}^\pm],
    \]
    which is known to equal $\mathcal A(\widetilde B)$ \cite[Theorem 1.18]{MR2110627}.

    Finally, let $C$ be of affine type $A$. According to \Cref{ex: affine A generic f poly}, the generic CC function $X^{\widetilde{B}}_{\widetilde{\mathbf g}}$ is independent of the symmetrizer $D$. In the case of minimal $D$, the desired property is known; see \cite{MR2738377,MR3036003} for the coefficient-free case and \cite{MR3061943} for the current situation where $\widetilde B$ is of full rank.
\end{proof}

\begin{proof}[{Proof of \Cref{thm: generic basis}}]
    First of all by \Cref{lemma: generic cc function univ laurent}, the set $\mathcal S$ is indeed a subset of $\mathcal A(\widetilde B)$. We then follow the same strategy as in Qin's proof of \cite[Theorem 1.2.3]{MR4721032} that generic CC functions form a basis in the skew-symmetric case (in a much more general context allowing non-acyclic seeds). Consider the source mutation sequence (\ref{eq: source sequence})
    \[
        v_0 = t_0 \frac{n}{\quad\quad} v_1 \frac{n-1}{\quad\quad} \cdots \frac{2}{\quad\quad} v_{n-1} \frac{1}{\quad\quad} v_n.
    \]
    This is called an \emph{injective-reachable} mutation sequence in \cite{MR4721032}. Heuristically, the cluster variables in seed $v_n$ correspond to the injective modules $\inj_1, \dots, \inj_n$ in our construction in \Cref{subsec: cluster variables cc function}. According to \cite[Theorem 4.3.1]{MR4721032} (with the full-rank assumption of $\widetilde B$), to prove that $\mathcal S$ is a $\mathbb Z$-basis of $\overline{\mathcal A}(\widetilde B) = \mathcal A(\widetilde B)$, it suffices to check that every $X_{\tilde {\mathbf g}}^{\widetilde B}$ is \emph{compatibly pointed} \cite[Definition 3.4.2]{MR4721032} at seeds $v_0$, $v_1$, \dots, $v_n$. Precisely, this means
    \begin{enumerate}
        \item for any $v_s$, $X_{\tilde{\mathbf g}}^{\widetilde B}$ is of the form
        \[
            P_{v_s}(\hat y_{1; v_s}, \dots, \hat y_{n; v_s}) \cdot \prod_{i=1}^m x_{i; v_s}^{g_{i; v_s}}
        \]
        where $P_{v_s}$ is a polynomial of $n$ variables with constant term 1 (\emph{Pointed});
        \item for $v_s \frac{n-s}{\quad\quad}v_{s+1}$, the vectors $\tilde {\mathbf g}_{v_s} \coloneqq (g_{i; v_s})_{i=1}^m$ and $\tilde {\mathbf g}_{v_{s+1}} \coloneqq (g_{i; v_{s+1}})_{i=1}^m$ are related by the mutation rule (\ref{eq: mutation rule extended g vector}).
    \end{enumerate}
    Starting from $\tilde {\mathbf g}_{v_0} \coloneqq \tilde{\mathbf g}$, we iteratively apply the operation $T_k$ in (\ref{eq: mutation rule extended g vector}) to define $\tilde{\mathbf g}_{v_s}$ such that
    \[
        \tilde {\mathbf g}_{v_{s+1}} = T_{n-s}(\tilde {\mathbf g}_{v_s}).
    \]
    It then follows inductively from \Cref{prop: reflection generic cc function coef} that at each seed $v_s$, $X_{\widetilde{\mathbf g}}^{\widetilde B}$ is pointed at $\tilde{\mathbf g}_{v_s}$ with $P_{v_s}$ the corresponding generic $F$-polynomial. Now that we have checked the \emph{compatibly pointed} property of every element in $\mathcal S$, this finishes the proof that $\mathcal S$ is a basis.

    Lastly we show that $\mathcal S$ contains all cluster monomials. We have understood that if $\mathbf g = (g_i)_{i\in I}\in \mathbb Z^n$ is the injective $\mathbf g$-vector of a (decorated) rigid locally free $H$-module $\mathcal M = (M, \mathbf v)$, then 
    \[
        F_M\left(y_i\prod_{j=1}^n x_j^{b_{ji}}\middle\vert i\in I\right) \cdot \prod_{i = 1}^n x_i^{g_i}    
    \]
    is the corresponding cluster monomial in the cluster algebra $\mathcal A_\bullet(B)$ with principal coefficients; see \Cref{rmk: cc function of cluster monomial}. Then by Fomin--Zelevinsky's \emph{separation formula} (\Cref{thm: separation formula}), there exists some $\tilde {\mathbf g}\in \mathbb Z^m$ extending $\mathbf g$ such that $X^{\widetilde B}_{\tilde {\mathbf g}}$ is the corresponding cluster monomial; see \Cref{rmk: extended g vector trop eva}. This finishes the proof.
\end{proof}

\subsection{Canonical decomposition}\label{subsec: canonical decomposition}

We write $\mathcal Z(\mathbf r) = \replf(H, \mathbf r)$. Suppose that $\mathbf r = \mathbf r_1 + \dots + \mathbf r_t$. Let $\mathcal Z(\mathbf r_1) \oplus \cdots \oplus \mathcal Z(\mathbf r_t)$ denote the set of points in $\mathcal Z(\mathbf r)$ that are isomorphic to $M_1\oplus \cdots \oplus M_t$ for some $M_i\in \mathcal Z(\mathbf r_i)$ for $1\leq i \leq t$.

\begin{theorem}[\cite{MR3801499}]\label{thm: can decomp H}
    Let $H = H_K(C, D, \Omega)$ with $K$ an algebraically closed field. For any $\mathbf r\in \mathbb N^n\setminus \{0\}$, there exist $\mathbf r_1, \dots, \mathbf r_t \in \mathbb N^n\setminus \{0\}$ such that
    \begin{enumerate}
        \item $\mathbf r = \mathbf r_1 + \cdots + \mathbf r_t$;
        \item $\mathcal Z(\mathbf r) = \overline{\mathcal Z(\mathbf r_1) \oplus \cdots \oplus \mathcal Z(\mathbf r_t)}$;
        \item each $\mathcal Z(\mathbf r_i)$ contains generically indecomposable (locally free) modules.
    \end{enumerate}
    Moreover, the tuple $(\mathbf r_i)_i$ is unique and independent of the symmetrizer $D$ up to permutations.
\end{theorem}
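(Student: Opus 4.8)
The plan is to transfer Kac's theory of canonical decompositions of quiver representations, in the form developed by Schofield and Crawley--Boevey, to the affine space $\mathcal Z(\mathbf r)=\replf(H,\mathbf r)$ of locally free modules. Two structural facts make the transfer smooth. First, $\mathcal Z(\mathbf r)$ is an \emph{irreducible} affine variety, so ``generic'' is well behaved: any constructible subset either is contained in a proper closed subset or contains a dense open one, and among finitely many constructible sets partitioning $\mathcal Z(\mathbf r)$ exactly one has dense interior. Second, $\replf H$ is closed under direct summands (a finitely generated projective module over the local ring $H_i$ is free) and under extensions (if $0\to L\to M\to N\to 0$ with $L,N\in\replf H$, then each $M_i$ is free since $N_i$ is projective over $H_i$); hence the entire decomposition theory stays inside $\replf H$, where $\operatorname{Ext}^1_H$ governs non-split short exact sequences. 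I would organise the argument as: (a) existence and well-definedness of the generic decomposition type, giving (1); (b) the closure identity (2); (c) generic indecomposability of the pieces, conclusion (3); (d) uniqueness; (e) independence of $D$.

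\textbf{Steps (a) and (b).} For $\mathbf s\in\mathbb N^n$ with $\mathbf 0\neq\mathbf s\neq\mathbf r$ and $\mathbf s\le\mathbf r$, the set $\{M\in\mathcal Z(\mathbf r)\mid M\text{ has a direct summand of rank }\mathbf s\}$ equals the $\mathrm{GL}_{\mathbf r}(H)$-saturation of the image of the block-diagonal direct-sum morphism $\mathcal Z(\mathbf s)\times\mathcal Z(\mathbf r-\mathbf s)\to\mathcal Z(\mathbf r)$, and is therefore constructible by Chevalley's theorem (see \cite{MR0361141}); here one uses that $\mathrm{GL}_{\mathbf r}(H)=\operatorname{Aut}_S(\bigoplus_i H_i^{r_i})$ acts transitively on $S$-module splittings of type $(\mathbf s,\mathbf r-\mathbf s)$. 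Consequently, for every rank vector the indecomposable locus $\mathcal Z(\cdot)^{\mathrm{ind}}$ is constructible, and hence so, for each finite multiset $\tau=(\mathbf r_1,\dots,\mathbf r_t)$ with $\sum_i\mathbf r_i=\mathbf r$, is the set $\mathcal Z(\mathbf r)_\tau$ of $M$ isomorphic to some $\bigoplus_i N_i$ with $N_i$ indecomposable of rank $\mathbf r_i$ (again a $\mathrm{GL}_{\mathbf r}(H)$-saturation of a block direct-sum image, now from $\prod_i\mathcal Z(\mathbf r_i)^{\mathrm{ind}}$). The finitely many $\mathcal Z(\mathbf r)_\tau$ partition the irreducible variety $\mathcal Z(\mathbf r)$, so exactly one of them, say $\tau=(\mathbf r_1,\dots,\mathbf r_t)$, contains a dense open $U$; this is (1). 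Since $U\subseteq\mathcal Z(\mathbf r_1)\oplus\cdots\oplus\mathcal Z(\mathbf r_t)\subseteq\mathcal Z(\mathbf r)$ and $\mathcal Z(\mathbf r)$ is closed, taking closures gives $\mathcal Z(\mathbf r)=\overline U=\overline{\mathcal Z(\mathbf r_1)\oplus\cdots\oplus\mathcal Z(\mathbf r_t)}$, which is (2).

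\textbf{Step (c): the main obstacle.} It remains to show each $\mathcal Z(\mathbf r_i)$ is generically indecomposable, i.e.\ $\mathcal Z(\mathbf r_i)^{\mathrm{ind}}$ is dense. When $\mathbf r_i$ is a real Schur root this is immediate, since by \Cref{thm: real schur roots rigid module} and the open-orbit discussion in \Cref{subsec: affine spaces loc free} the space $\mathcal Z(\mathbf r_i)$ then has a dense orbit of a rigid indecomposable; the content lies in the imaginary case. Here I would argue along Kac/Schofield lines, using: upper semicontinuity of $M\mapsto\dim\operatorname{End}_H(M)$ and of $(\mathbf a,\mathbf b)\mapsto\dim\operatorname{Ext}^1_H$; the fact that a non-split extension of $B$ by $A$ degenerates to $A\oplus B$, so that whenever $A\oplus B$ is generic in $\mathcal Z(\operatorname{rank}A+\operatorname{rank}B)$ one must have $\operatorname{Ext}^1_H(A,B)=0$ (the strictly larger orbit closure would otherwise contradict density); and a tangent-space dimension count showing that if generic $A_0,B_0$ of ranks $\mathbf a,\mathbf b$ satisfy $\operatorname{Ext}^1_H(A_0,B_0)=0=\operatorname{Ext}^1_H(B_0,A_0)$, then the direct-sum morphism $\mathcal Z(\mathbf a)\times\mathcal Z(\mathbf b)\to\mathcal Z(\mathbf a+\mathbf b)$ is dominant. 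From these one deduces $\operatorname{Ext}^1_H(N_i,N_j)=0$ generically for $i\neq j$, and that each $\mathbf r_i$ is indivisible in the sense that it cannot be written $\mathbf a+\mathbf b$ with $\mathbf a,\mathbf b\neq\mathbf 0$ and generic $\operatorname{Ext}^1_H$ vanishing in both directions --- which is precisely the statement that a generic module of rank $\mathbf r_i$ is indecomposable. I expect the main technical work to be setting up the Schofield-type theory of ``general locally free modules'' (genericity of sub/quotient ranks, the recursion for generic $\operatorname{hom}$ and $\operatorname{ext}$, the dimension count with the $H_i$-module structures) over the base ring $S=\prod_i H_i$ rather than a field.

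\textbf{Steps (d) and (e).} For uniqueness, suppose $\mathbf r=\mathbf s_1+\cdots+\mathbf s_u$ with each $\mathcal Z(\mathbf s_j)$ generically indecomposable and $\mathcal Z(\mathbf r)=\overline{\mathcal Z(\mathbf s_1)\oplus\cdots\oplus\mathcal Z(\mathbf s_u)}$; then the locus of $M$ isomorphic to $\bigoplus_j N'_j$ with $N'_j$ indecomposable of rank $\mathbf s_j$ is dense, hence meets the dense open $U$ of Step (a), and at a point of the intersection the Krull--Schmidt theorem in $\rep H$ forces $(\mathbf s_j)_j$ to be a permutation of $(\mathbf r_i)_i$. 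For independence of $D$, recall from \cite[Sections 1 and 5]{MR3660306} that the Euler form $\langle\mathbf a,\mathbf b\rangle=\dim\operatorname{Hom}_H(M,N)-\dim\operatorname{Ext}^1_H(M,N)$ on rank vectors of locally free modules is given by a matrix depending only on $C$ and $\Omega$; combined with the Schofield-type recursion for the generic values of $\operatorname{hom}$ and $\operatorname{ext}$ used in Step (c), this makes all the vanishing conditions characterising the canonical decomposition intrinsic to $(C,\Omega)$. Alternatively one may invoke the change-of-symmetrizer comparison of module varieties of \cite{MR3801499}, under which rank vectors and these homological invariants are preserved. Either way, $(\mathbf r_i)_i$ is independent of $D$ up to reordering.
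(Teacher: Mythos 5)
Your proposal should first be measured against the fact that the paper offers no internal proof of \Cref{thm: can decomp H}: the theorem is imported wholesale from \cite{MR3801499}, so what you are attempting is a reconstruction of that paper's result. The easy half of your plan is correct: $\mathcal Z(\mathbf r)$ is an affine space, $\replf H$ is closed under direct summands and extensions, the loci $\mathcal Z(\mathbf r)_\tau$ are constructible and partition $\mathcal Z(\mathbf r)$, and your Krull--Schmidt/density argument in (d) is the right uniqueness argument. The first genuine gap is step (c): what you give is a programme (``I would argue along Kac/Schofield lines'', ``I expect the main technical work\dots'') rather than a proof --- the semicontinuity statements, the equivalence between density of $\mathcal Z(\mathbf a)\oplus\mathcal Z(\mathbf b)$ in $\mathcal Z(\mathbf a+\mathbf b)$ and generic $\operatorname{Ext}^1$-vanishing in both directions, and the dimension counts over $S=\prod_i H_i$ are precisely the nontrivial content, and none of it is set up. Note, however, that (3) can be obtained much more cheaply: among all decompositions $\mathbf r=\mathbf r_1+\cdots+\mathbf r_t$ satisfying (1) and (2) (the trivial one exists, and $t$ is bounded by $\sum_i r_i$), choose one of maximal length; if some $\mathcal Z(\mathbf r_i)$ were not generically indecomposable, its decomposable locus would be dense, hence some $\mathcal Z(\mathbf s)\oplus\mathcal Z(\mathbf r_i-\mathbf s)$ would be dense in $\mathcal Z(\mathbf r_i)$, and substituting this refinement into (2) (closures of saturated direct-sum images are unchanged when a factor is replaced by a dense subset) would contradict maximality. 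Your own argument from (d) then identifies this maximal decomposition with the generic Krull--Schmidt type, so existence, (2), (3) and uniqueness all follow without any Schofield-type theory over $S$.

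The second, and more serious, gap is the independence of the symmetrizer, which is the actual reason the theorem is attributed to \cite{MR3801499}. Your first route rests on the claim that the Euler form $\dim\operatorname{Hom}-\dim\operatorname{Ext}^1$ on rank vectors of locally free modules depends only on $(C,\Omega)$; this is false --- already $\dim_K\operatorname{End}_H(E_i)=\dim_K H_i$ equals the $i$-th entry of the symmetrizer, and the homological bilinear form of \cite{MR3660306} scales row-wise with $D$. What actually has to be shown is that the \emph{vanishing} of generic $\operatorname{Ext}^1$ between $\mathcal Z(\mathbf a)$ and $\mathcal Z(\mathbf b)$ (equivalently, density of the direct-sum locus) is insensitive to replacing $D$ by another symmetrizer, and that comparison of module varieties for different symmetrizers is exactly the content of \cite{MR3801499}; your second route simply cites it, which is what the paper itself does, but it is not a proof. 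So the elementary part of your plan stands (and can be streamlined as above), while the genuinely hard ingredient --- independence of $D$ --- is left unestablished.
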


The decomposition $\mathbf r = \mathbf r_1 + \cdots + \mathbf r_t$ is called the \emph{canonical decomposition} of $\mathbf r$, studied initially by Kac \cite{MR677715} in the context of quiver representations. It follows directly from the above theorem that a generic locally free module $M$ is a direct sum $M_1 \oplus \cdots \oplus M_t$ where each $M_i$ is also generic in $\mathcal Z(\mathbf r_i)$. Consequently we have

\begin{proposition}\label{prop: generic f poly can decomp}
Let $\mathbf r = \mathbf r_1 + \cdots + \mathbf r_t$ be the canonical decomposition of $\mathbf r$. Then the generic $F$-polynomial $F_\mathbf r$ on $\mathcal Z(\mathbf r)$ equals $\prod_{i=1}^t F_{\mathbf r_i}$ where each $F_{\mathbf r_i}$ is the generic $F$-polynomial on $\mathcal Z(\mathbf r_i)$.   
\end{proposition}

\begin{proof}
    Let $U_i\in \mathcal Z(\mathbf r_i)$ where the generic $F$-polynomial $F_{\mathbf r_i}$ is taken. Consider the map
    \[
        \mathrm{GL}_\mathbf r(H) \times \mathcal Z(\mathbf r_1)\times \cdots \times \mathcal Z(\mathbf r_t) \rightarrow \mathcal Z(\mathbf r)
    \]
    sending a tuple $(g, M_1, \dots, M_t)$ to $g\cdot(M_1\oplus \cdots \oplus M_t)$. Then \Cref{thm: can decomp H} says that the image of the map is dense. It follows that the image of the open subset $\mathrm{GL}_\mathbf r(H) \times U_1 \times \cdots \times U_t$ is also dense. Therefore we can find $M \cong M_1\oplus \dots \oplus M_t$ such that $F_M = F_\mathbf r$ with $M_i\in U_i$. By \Cref{rmk: cc function direct sum}, we have $F_\mathbf r = \prod_{i=1}^tF_{\mathbf r_i}$.
\end{proof}

When $C$ is of affine type, the canonical decomposition for $H$ has recently been worked out by Pfeifer.


\begin{theorem}[\cite{pfeifer2023generic}]\label{thm: pfeifer}
    The canonical decomposition of a rank vector $\mathbf r\in \mathbb N^n$ is of the form 
    \[
        \mathbf r = \eta_1 + \dots + \eta_m + \sum_{i = 1}^t \beta_i
    \]
    where $\eta_1 = \dots = \eta_m = \eta$ is the minimal imaginary root, and $\beta_i$ is the rank of an indecomposable locally free rigid module $M(\beta_i)$ such that
    \begin{enumerate}
        \item when $m = 0$, $M(\beta_1)\oplus \cdots \oplus M(\beta_t)$ is rigid;
        \item when $m\neq 0$, $M(\beta_1)\oplus \cdots \oplus M(\beta_t)$ is regular and rigid.
    \end{enumerate}
\end{theorem}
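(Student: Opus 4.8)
The plan is to take the existence, uniqueness, and symmetrizer-independence of the canonical decomposition as already given by \Cref{thm: can decomp H}, and then to (a) pin down exactly which rank vectors can occur as its summands and (b) read off rigidity and regularity from the defining vanishing of generic $\mathrm{Ext}^1$ between the summands. Since the decomposition is independent of $D$, I would first pass to the minimal symmetrizer, where $\replf H$ carries the same root-theoretic and Auslander--Reiten combinatorics as the modules over a tame hereditary algebra (equivalently, a species) of type $(C,\Omega)$: by \Cref{thm: coxeter functors AR translation} the Coxeter functors implement the Auslander--Reiten translation $\tau$ on $\replf H$, giving the usual trichotomy of indecomposable $\tau$-locally free modules into preprojective, regular and preinjective ones, with the regular indecomposables distributed among finitely many non-homogeneous tubes of ranks $d_1,\dots,d_\ell$ and a one-parameter family of homogeneous (rank-one) tubes.

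\emph{Step 1: classifying generically indecomposable rank vectors.} If $\mathcal Z(\mathbf s)$ contains a dense orbit, then $\mathbf s$ is a real Schur root and the generic module of rank $\mathbf s$ is the rigid indecomposable $M(\mathbf s)$ from \Cref{thm: real schur roots rigid module}. Otherwise $\mathbf s$ is imaginary, and the key claim — which I would prove as in Kac's analysis of tame quivers — is that the only imaginary $\mathbf s$ with $\mathcal Z(\mathbf s)$ generically indecomposable is the minimal imaginary root $\eta$. Indeed, $\eta$ is sincere and isotropic, so a generic module of rank $\eta$ is a homogeneous regular brick $R$ with $\tau R \cong R$ and $\dim \mathrm{Hom}_H(R,R) = \dim \mathrm{Ext}^1_H(R,R) = 1$, whereas for $k \geq 2$ an orbit-dimension count on $\mathcal Z(k\eta)$ shows its generic point to be a direct sum of $k$ homogeneous regulars lying in pairwise distinct homogeneous tubes, hence decomposable. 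Combined with \Cref{thm: can decomp H}, this forces the canonical decomposition of $\mathbf r$ into the asserted form
\[
    \mathbf r = \eta_1 + \dots + \eta_m + \sum_{i=1}^t \beta_i, \qquad \eta_1 = \dots = \eta_m = \eta,
\]
with each $\beta_i$ a real Schur root and $M(\beta_i)$ the corresponding rigid indecomposable.

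\emph{Step 2: rigidity and regularity.} In a canonical decomposition the generic $\mathrm{Ext}^1$ between the generic modules of any two distinct summands vanishes. Applied to the $\beta_i$ this gives $\mathrm{Ext}^1_H(M(\beta_i), M(\beta_j)) = 0$ for $i \neq j$, so together with the rigidity of each $M(\beta_i)$ the module $M(\beta_1) \oplus \cdots \oplus M(\beta_t)$ is rigid, which is case (1), $m = 0$. When $m \neq 0$, the summand $\eta$ also appears, so one additionally has $\mathrm{Ext}^1_H(R, M(\beta_i)) = \mathrm{Ext}^1_H(M(\beta_i), R) = 0$ for a generic homogeneous regular $R$ of rank $\eta$; using the Auslander--Reiten formula $\mathrm{Ext}^1_H(R, M(\beta_i)) \cong D\mathrm{Hom}_H(M(\beta_i), \tau R) \cong D\mathrm{Hom}_H(M(\beta_i), R)$ and the standard nonvanishing of $\mathrm{Hom}$ from preprojectives to any sincere regular and from any sincere regular to preinjectives, one excludes $\beta_i$ being preprojective or preinjective. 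Hence every $\beta_i$ is regular and $M(\beta_1) \oplus \cdots \oplus M(\beta_t)$ is regular and rigid, which is case (2). Conversely, the displayed decomposition satisfies conditions (1)--(3) of \Cref{thm: can decomp H} — $\mathcal Z(\eta)$ and each $\mathcal Z(\beta_i)$ are generically indecomposable, and the direct-sum morphism onto $\mathcal Z(\mathbf r)$ is dominant — so by uniqueness it is the canonical decomposition.

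\emph{Main obstacle.} The delicate part is Step 1: establishing that a generic module of rank $k\eta$ splits off $k$ homogeneous regulars in distinct tubes. Making this rigorous for $\replf H$ requires transplanting the tame-hereditary arguments — openness of the regular locus, the rank-one structure of the homogeneous tubes, and a dimension count of $\mathcal Z(k\eta)$ against the locus of modules supported on the non-homogeneous tubes — to the GLS setting, which is essentially the technical content of \cite{pfeifer2023generic}.
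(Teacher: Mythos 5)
There is no in-paper proof to compare against: the paper states this result as an external citation of \cite{pfeifer2023generic} and proves nothing itself, so your proposal has to stand on its own — and as it stands it has a genuine gap. Your overall strategy (take existence/uniqueness/symmetrizer-independence from \Cref{thm: can decomp H}, classify the generically indecomposable rank vectors as the real Schur roots plus the single isotropic root $\eta$, then read off rigidity/regularity from generic $\mathrm{Ext}^1$-vanishing and the AR formula) is indeed the expected one, modeled on Kac and Dlab--Ringel for tame hereditary algebras. But the load-bearing claims are exactly the ones you do not prove: (i) that the only imaginary rank vector with $\mathcal Z(\mathbf s)$ generically indecomposable is $\eta$, and that a generic point of $\mathcal Z(k\eta)$ for $k\geq 2$ splits into $k$ homogeneous summands in pairwise distinct tubes; and (ii) that the dense-direct-sum condition in \Cref{thm: can decomp H}(2) is equivalent, in the locally free setting, to generic $\mathrm{Ext}^1$-vanishing between the summand strata (you invoke this equivalence silently in Step 2, but \Cref{thm: can decomp H} as stated says nothing about $\mathrm{Ext}^1$). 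Passing to the minimal symmetrizer does not reduce these to known hereditary facts: $H$ is not hereditary even then (e.g.\ type $\widetilde{\mathbb B}_3$ with $D=\mathrm{diag}(1,2,2,1)$ still has the loops $\varepsilon_i$), so the trichotomy preprojective/regular/preinjective inside $\replf H$, the tube structure, the defect computations and the Hom-nonvanishing from preprojectives to sincere regulars all have to be re-established for locally free $H$-modules — which is precisely the technical content of \cite{pfeifer2023generic} (and partly of \cite{lin2024affine}). You acknowledge this in your ``main obstacle'' paragraph, but deferring the decisive step to the very reference being proven means the proposal is a plausible reconstruction of Pfeifer's strategy rather than a proof.

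Two smaller points if you pursue this seriously: in Step 2 the AR formula gives $\mathrm{Ext}^1_H(R,M)\cong D\overline{\Hom}_H(M,\tau R)$ with the stable Hom modulo injectives, so you must argue (as in the hereditary case, via $\Hom(\text{preinjective},\text{regular})=0$, but now for $\replf H$) that no nonzero map $M(\beta_i)\to R$ factors through an injective before concluding nonvanishing of $\mathrm{Ext}^1$; and your converse sentence (``the displayed decomposition satisfies (1)--(3), hence by uniqueness it is canonical'') presupposes that the direct-sum locus is dense, which again needs the generic-Ext criterion you have not established in this setting.
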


\begin{remark}
    Let $\tilde {\mathbf g}\in \mathbb Z^m$ and $\mathbf v\in \mathbb Z^n$ correspond to the principal part $\mathbf g$ of $\tilde {\mathbf g}$. Then $\mathbf v = \mathbf v^+ - \mathbf v^-$ and we can canonically decompose $\mathbf v^+$ as in \Cref{thm: pfeifer}. It then follows that the generic CC function $X_{\tilde {\mathbf g}}$ has a factorization into a cluster monomial and a monomial of $X_{\xi}$ (up to a monomial of frozen variables) where $\xi\in \mathbb Z^m$ is some extended $\mathbf g$-vector whose principal part is $-\frac{1}{2}B\eta$, the $\mathbf g$-vector of $\eta$.
\end{remark}

According to \Cref{thm: pfeifer} and \Cref{prop: generic f poly can decomp}, besides $F_{\beta_i}$ (which is the $F$-polynomial of the corresponding cluster variable), it only amounts to knowing $F_\eta$ to express $F_\mathbf r$ (thus also $X_\mathbf r$). In fact, when $D$ is minimal, Pfeifer constructs in \cite{pfeifer2023generic} a $\mathbb P^1$-family of non-isomorphic modules $V_\lambda$, $\lambda\in \mathbb P^1$ such that the union of their $\mathrm{GL}_\mathbf r(H)$-orbits $\bigsqcup_{\lambda\in \mathbb P^1} \mathcal O(V_\lambda)$ is dense in $\replf(H, \eta)$. Therefore the generic $F$-polynomial can be taken in $\{V_\lambda \mid \lambda\in \mathbb P^1\}$. The corresponding generic CC function has been computed in \cite[Remark 4.8]{pfeifer2023generic} using an explicit description of $V_\lambda$ in the case where $C = \begin{bsmallmatrix}
    2 & -1 \\
    -4 & 2
\end{bsmallmatrix}$.

\begin{example}
    We continue with \Cref{ex: affine b3}. The minimal imaginary root is $\eta = (1, 1, 1, 1)$. A generic module of rank vector $\eta$ is of the form
    \[
        V_\lambda = \begin{tikzcd}
            4 \ar[r] & 3 \ar[d] \ar[r] & 2 \ar[d] \ar[r] & 1\\
            & 3 \ar[r] & 2 \ar[ru, "\lambda"] & 
        \end{tikzcd}.
    \]
    The $F$-polynomial is clearly $1 + y_1 + y_1y_2 + y_1y_2y_3 + y_1y_2y_3y_4$ and the (injective) $\mathbf g$-vector is $(-1, 0, 0, 1)$.
\end{example}

\begin{example}\label{ex: affine A generic f poly}
    Let $C$ be of affine type $A$ and let $\Omega$ be an acyclic orientation of $C$. By \Cref{thm: can decomp H}, the canonical decomposition of any rank vector $\mathbf{r}$ is independent of the symmetrizer $D$; in particular we may take the minimal symmetrizer $D = \mathrm{diag}(1, \dots, 1)$. It is well-known that, in this case, each rank vector $\mathbf r_i$ appearing in the canonical decomposition of $\mathbf r$ as components only in $\{0, 1\}$ \cite{MR0447344,MR677715}. It follows immediately from \Cref{prop: generic f poly can decomp} that the generic $F$-polynomial $F_\mathbf{r}$ is independent of the choice of $D$.
\end{example}

\section*{Acknowledgements}

LM was supported by the Deutsche Forschungsgemeinschaft (DFG, German Research Foundation) – Project-ID 281071066 – TRR 191. LM and XS received supports from the Royal Society by the Newton International Fellowships Alumni AL$\backslash$231002.

\providecommand{\bysame}{\leavevmode\hbox to3em{\hrulefill}\thinspace}
\providecommand{\MR}{\relax\ifhmode\unskip\space\fi MR }
\providecommand{\MRhref}[2]{%
  \href{http://www.ams.org/mathscinet-getitem?mr=#1}{#2}
}
\providecommand{\href}[2]{#2}

\end{document}